\newcommand{\bb}[1]{\mathbb{#1}}
\newcommand{\calZ}[1]{\mathcal{#1}}
\newcommand{\set}[1]{\left\{#1\right\}}
\newcommand{\cset}[2]{\left\{#1\mid #2\right\}}
\newcommand{\Norm}[1]{\lVert #1 \rVert}
\newcommand{\rank}{\mathrm{rk}\,}
\newcommand{\tuple}[1]{\mathfrak{#1}}
\newcommand{\Var}[1]{\mathcal{#1}}
\newcommand{\Sec}[2]{\sigma_{#1}(#2)}
\newcommand{\tensor}[1]{\mathfrak{#1}}
\newcommand{\vect}[1]{\mathbf{#1}}
\newcommand{\sten}[3]{\vect{#1}_{#2}^{#3}}
\newcommand{\Tang}[2]{\mathrm{T}_{#1} {#2}}
\newcommand{\R}{\mathbb{R}}
\newcommand{\deriv}[2]{\mathrm{d}_{#2}#1}
\newcommand{\ldbracket}{\text{\textlbrackdbl}}
\newcommand{\rdbracket}{\text{\textrbrackdbl}}
\newcommand{\Gr}{\mathrm{Gr}}
\newcommand{\GrSigma}{\Sigma_{\Gr}}
\newcommand{\dist}{\mathrm{dist}}
\newcommand{\JOIN}{\mathrm{Join}}
\newcommand{\Pj}{\mathbb{P}}
\newcommand{\refthm}[1]{{\cref{#1}}}
\newcommand{\reflem}[1]{{\cref{#1}}}
\newcommand{\refeqn}[1]{{(\ref{#1})}}
\newcommand{\refsec}[1]{{\cref{#1}}}
\newcommand{\refcor}[1]{{\cref{#1}}}
\newcommand{\reffig}[1]{{\cref{#1}}}
\newcommand{\reftab}[1]{{\cref{#1}}}
\newcommand{\refprop}[1]{{\cref{#1}}}
\newcommand{\refrem}[1]{{Remark \ref{#1}}}
\numberwithin{equation}{section}
\numberwithin{figure}{section}
\numberwithin{table}{section}
\numberwithin{theorem}{section}
\title{The condition number of join decompositions\thanks{Submitted to \funding{The first author was partially supported by DFG research grant BU 1371/2-2. The second author was supported by a Postdoctoral Fellowship of the Research Foundation--Flanders (FWO).}}
}
\author{
Paul Breiding\thanks{Max-Planck Institute for Mathematics in the Sciences Leipzig. (\email{breiding@mis.mpg.de})}
\and
Nick Vannieuwenhoven\thanks{KU Leuven, Department of Computer Science. (\email{nick.vannieuwenhoven@cs.kuleuven.be})}}
\begin{document}

\maketitle

\begin{abstract}
The join set of a finite collection of smooth embedded submanifolds of a mutual vector space is defined as their Minkowski sum. Join decompositions generalize some ubiquitous decompositions in multilinear algebra, namely tensor rank, Waring, partially symmetric rank and block term decompositions.
This paper examines the numerical sensitivity of join decompositions to perturbations; specifically, we consider the condition number for general join decompositions.
It is characterized as a distance to a set of ill-posed points in a supplementary product of Grassmannians. We prove that this condition number can be computed efficiently as the smallest singular value of an auxiliary matrix.
For some special join sets, we characterized the behavior of sequences in the join set converging to the latter's boundary points. Finally, we specialize our discussion to the tensor rank and Waring decompositions and provide several numerical experiments confirming the key results.
\end{abstract}

\begin{keywords}
join set, join decomposition problem, condition number, tensor rank decomposition, CP decomposition, Waring decomposition, block term decomposition%
\end{keywords}

\begin{AMS}
49Q12, 65F35, 53B20, 53B21, 14P10, 65Y20, 15A69
\end{AMS}



\section{Introduction}\label{sec:introduction}
Many subsets of $\R^N$ admit the structure of a \emph{join set}:
the join $\Var{J} = \JOIN(\Var{M}_1,\ldots,\Var{M}_r)$ of a collection of smooth embedded submanifolds $\Var{M}_i \subset \R^N$ is defined as the image of the addition map
\begin{equation}\label{eqn:phi}
\Phi:\calZ{M}_1\times\ldots \times \calZ{M}_r \longrightarrow \bb{R}^N,\quad
(\vect{p}_1,\ldots,\vect{p}_r)\longmapsto \vect{p}_1 + \cdots + \vect{p}_r.
\end{equation}
The class of join sets comprises, among others, tensor canonical rank decompositions, Waring decompositions, partially symmetric rank decompositions, and block term decompositions.
For example, the set of $m\times n$ matrices of rank equal to $1$ is a manifold that we refer to as the Segre manifold $\Var{S}_{m,n}$. Constructing the join set $\Sec{r}{\Var{S}_{m,n}} := \JOIN(\Var{S}_{m,n},\ldots,\Var{S}_{m,n})$, we obtain exactly the algebraic variety of matrices of rank at most $r$ \cite{Landsberg2012}. This example generalizes to higher-order tensors, as the set of rank-$1$ tensors in $\R^{m_1} \otimes \cdots \otimes \R^{m_d} \simeq \R^{m_1 \cdots m_d}$ forms the Segre manifold $\Var{S} := \Var{S}_{m_1,\ldots,m_d}$ \cite{Landsberg2012}. The join set $\Sec{r}{\Var{S}} := \operatorname{Join}(\Var{S},\ldots,\Var{S})$ is then the semi-algebraic set consisting of the tensors of rank at most $r$ \cite{dSL2008}.
Note that these examples are special because $\Var{M}_1 = \cdots = \Var{M}_r$, in which case we refer to the join set as an \emph{$r$-secant} set. Block term decompositions \cite{Lathauwer2008} where the summands have different multilinear ranks are an example of a join set that is not a secant set. An example of secant sets not involving tensors is given in algebraic vision; if the focal locus of a certain camera model, called \emph{congruence}, is a curve $Y$, then the congruence is given by the set of secants $\Sec{2}{Y}$ \cite{congruences,congruences2}.

The \emph{join decomposition problem} (JDP) is a natural computational problem associated with a join set, comprising several well-known tensor decomposition problems as particular instances.
Given smooth embedded manifolds $\Var{M}_1,\ldots,\Var{M}_r \subset \R^N$ and a $\vect{p}\in\JOIN(\Var{M}_1,\ldots,\Var{M}_r )$ the JDP is this:
\begin{equation*}
\begin{array}{l}
\text{Compute $\tuple{p}\in\Var{M}_1\times \ldots \times \Var{M}_r$ such that $\Phi(\tuple{p})=\vect{p}$.}
\end{array}
\tag{JDP}\label{eqn_JDP}
\end{equation*}
{From our applications in tensor decompositions,} we are particularly interested in the case where the JDP has only a finite number of solutions for a given $\vect{p} \in \Var{J}$.
In fact, most of the aforementioned join sets even offer some guarantees of a \emph{unique} solution of the JDP, usually up to a permutation of the summands; this is the case for tensor rank decompositions \cite{CO2012,COV2014,COV2016,DDL2013Part2,Kruskal1977,JS2004}, Waring decompositions \cite{COV2017,COV2016}, partially symmetric decompositions \cite{Kruskal1977} and specific types of block term decompositions \cite{Lathauwer2008,Yang2013}.
These uniqueness properties are often of significant practical value, e.g., for identifying the parameters of certain latent variable models \cite{AMR2009,AGHKT2014}.

While it happens for theoretical reasons that $\vect{p} \in \R^N$ belongs exactly to $\Var{J}$, this is an uncommon situation in applications. The reason is that $\vect{p}$ is often corrupted by different sources of noise; for example, representation errors due to roundoff are typically incurred when representing $\vect{p}$ on a computer. Further sources of error could be measurement errors, numerical approximation errors, accumulation of roundoff errors, and modeling errors. For these reasons, a more common computational problem is the \emph{join approximation problem} (JAP), which consists of finding a point on a join set that is a closest approximation to a given point $\widetilde{\vect{p}} \in \R^N$:
\begin{align}\label{eqn_prototype_optim_problem}
\tag{JAP} \text{Compute $\widetilde{\tuple{p}}\in\Var{M}_1\times \ldots \times \Var{M}_r$ such that $\widetilde{\tuple{p}} \in \underset{\tuple{q} \in \Var{M}_1 \times \cdots \times \Var{M}_r}{\arg\min} \|\Phi(\tuple{q}) - \widetilde{\vect{p}}\|$.}
\end{align}
For now we ignore the question whether the above problem is actually well-posed; see \refsec{sec_boundary_points} for our treatment of this issue.

The key question addressed in this paper is the following. If $\tuple{p}$ is the decomposition of the true but unobservable model $\vect{p} = \Phi(\tuple{p})$ and $\widetilde{\tuple{p}}$ is the decomposition obtained from solving (approximately) the JAP with an observed $\widetilde{\vect{p}} \approx \vect{p}$ as input, then what is the relationship between the decompositions $\tuple{p}$ and $\widetilde{\tuple{p}}$?
We focus on the first-order behavior of this relationship which is captured by the \emph{condition number} of the JDP.

Recall that an \emph{absolute condition number} of a function $f : D \to I$ at $x \in D$ is classically defined in numerical analysis \cite{Wilkinson1963,Higham1996} as the maximum magnification of an infinitesimal input perturbation by the function:
\begin{equation}\label{classical_condition}
\kappa[f](x) := \lim_{\epsilon \to 0} \; \max_{x' \in (B_{\epsilon}(x) \cap D)} \frac{\| f(x) - f(x') \|_I }{\| x - x' \|_D},
\end{equation}
where $\|\cdot\|_D$ and $\|\cdot\|_I$ are norms on the domain $D$ and image $I$ respectively, and $B_{\epsilon}(x)$ is the $\epsilon$-ball about $x$ in the norm $\|\cdot\|_D$.
In this paper, we thus seek the condition number of $\Phi^{-1}$. A complication immediately arises because a join decomposition is usually not strictly unique; that is, $\Phi$ is not injective. This entails that the condition number would always be $\infty$ by the above definition. Nevertheless, $\Phi$ can still be \emph{locally invertible}; that is, there exists an open neighborhood $\Var{N} \subset \Var{M}_1 \times \cdots \times \Var{M}_r$ of~$\tuple{p}$ and a \emph{local inverse function} $\Phi_{\tuple{p}}^{-1}$ at $\tuple{p}$ so that $\Phi_{\tuple{p}}^{-1} \circ \Phi|_{\Var{N}} = \operatorname{id}_{\Var{N}}$. In this case it is still possible to investigate the \emph{local condition number} of the JDP at $\tuple{p}$:
\begin{equation}\label{eqn_local_cond}
\kappa (\tuple{p}, \vect{p}) :=
\begin{cases}
\kappa[\Phi_{\tuple{p}}^{-1}](\Phi(\tuple{p})) & \text{if there is a local inverse function } \Phi_{\tuple{p}}^{-1}, \\
\infty& \text {otherwise.}
\end{cases}
\end{equation}
where $\vect{p}=\Phi(\tuple{p})$ and $\Phi_{\tuple{p}}^{-1}(\vect{p}) = \tuple{p}$. Because $\vect{p}$ is implicitly given by $\tuple{p}$, we omit $\vect{p}$ in the notation \refeqn{eqn_local_cond} and in the following we simply write $\kappa(\tuple{p})$ instead.

This local condition number captures $\tuple{p}$'s sensitivity to structured perturbations\footnote{By a structured perturbation of $\vect{p}$ we mean that the perturbation is such that $\widetilde{\vect{p}}$ is still a point in the join.} of $\vect{p} = \Phi(\tuple{p})$. In applications where the components of $\tuple{p} = (p_1,\ldots,p_r)$ have an interpretation, this condition number is naturally of interest because it bounds the perturbations to the $p_i$'s in the given join decomposition $\tuple{p}$. Moreover, \refeqn{eqn_local_cond} obeys the infamous rule of thumb of classic numerical analysis:
\begin{align} \label{eqn_rule_of_thumb}
 \underset{\text{forward error}}{\underbrace{\| \tuple{p} - \Phi_{\tuple{p}}^{-1}(\vect{w}) \|}} \lesssim \underset{\text{condition number}}{\underbrace{\kappa(\tuple{p})}} \cdot \underset{\text{backward error}}{\underbrace{\| \Phi(\tuple{p}) - \vect{w} \|}}
\end{align}
for all $\vect{w}$ in a small neighborhood of $\Phi(\tuple{p})$. Recall that this rule is one of the main applications of condition numbers in data analysis applications where one usually seeks to estimate the forward error based only on computable information \cite{Higham1996,Wilkinson1963}.

Following \cite[Remark 14.14]{BC2013}, the \emph{maximum local condition number} of the JPD at $\vect{p}$ is naturally defined as the maximum of the local condition numbers over the fiber of $\Phi$ at $\vect{p}$: $K(\vect p):= \max_{\tuple p \in \Phi^{-1}(\vect{p})} \kappa(\tuple p).$ Obtaining meaningful results for this condition number is vastly more complicated as we believe it requires deep understanding of the global geometry of join sets. This lies beyond the scope of this work.
Note that in the case of the aforementioned tensor decompositions, which are often unique up to the order of the summands, the condition number \refeqn{eqn_local_cond} is the same for all decompositions and hence coincides with the maximum condition number.

Despite the importance of condition numbers in numerical analysis and the interest in specific types of join decompositions, heretofore few results exist on their conditioning. Only recently a local condition number was proposed for the tensor rank decomposition \cite{V2017}.
In this paper, we aspire towards treating JDPs in significant generality using a more elegant approach.
Another notion of the stability of the parameters of a tensor rank decomposition is the Cram\'er--Rao bound; its connection to the condition number of the factor matrices is discussed in \cite{V2017}.

\subsection{Contributions}
For brevity, we call $\kappa(\tuple{p})$ in \refeqn{eqn_local_cond} the condition number in the remainder of the paper, its locality being implicit.
We derived the condition number of the JDP via a local application of the framework of B\"urgisser and Cucker \cite{BC2013}. In the rest of this subsection, let $\Var{M}_i$ be an $n_i$-dimensional embedded submanifold of~$\R^N$, and $n:=n_1+\cdots+n_r$. Moreover, let $\Tang{x}{\Var{M}}$ denote the tangent space to a manifold~$\Var{M}$ at $x \in \Var{M}$.

\begin{theorem}[Spectral characterization]
\label{thm_condition_number}
Let $\varsigma_n(A)$ denote the $n$th largest singular value of the linear operator $A$. The condition number of the JDP at the decomposition $\tuple{p} = (p_1,\ldots,p_r) \in \Var{M}_1\times\cdots\times\Var{M}_r$ is
\begin{align} \label{eqn_main_thm_condition}
 \kappa(\tuple{p}) = \frac{1}{\varsigma_{n}( \deriv{\Phi}{\tuple{p}} )} = \frac{1}{\varsigma_n(U)},
\end{align}
where $\deriv{\Phi}{\tuple{p}} : \Tang{\tuple{p}}{(\Var{M}_1 \times \cdots \times \Var{M}_r)} \to \R^N$ is the derivative of the addition map $\Phi$ at $\tuple{p}$, and $U := \begin{bmatrix} U_1 & \cdots & U_r \end{bmatrix}$ wherein $U_i \in \R^{N \times n_i}$ is an orthonormal basis of $\Tang{p_i}{\Var{M}_i}$.
\end{theorem}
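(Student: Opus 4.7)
My plan is to reduce the theorem to a standard Riemannian reformulation of the classical condition number, and then compute the resulting spectral norm in a judiciously chosen orthonormal basis.

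Step one: reduce to the operator norm of a derivative. By definition \refeqn{eqn_local_cond}, $\kappa(\tuple{p})$ is the classical condition number \refeqn{classical_condition} of the local inverse $\Phi_{\tuple{p}}^{-1}$ at $\vect{p}=\Phi(\tuple{p})$, where the norm on the domain is the ambient norm of $\R^N$ and the norm on the image is the product Euclidean norm on $(\R^N)^r$. A standard result for $C^1$ maps between embedded Riemannian manifolds (see \cite{BC2013}) identifies this limit with the spectral norm of the derivative $\deriv{(\Phi_{\tuple{p}}^{-1})}{\vect{p}}$, using that the ambient Euclidean norm restricts to the induced Riemannian metric on tangent vectors to first order. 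A local inverse exists precisely when $\deriv{\Phi}{\tuple{p}}$ is injective on $\Tang{\tuple{p}}{(\Var{M}_1\times\cdots\times\Var{M}_r)}$, by the inverse function theorem; when it is, the chain rule $\deriv{(\Phi_{\tuple{p}}^{-1})}{\vect{p}}\circ\deriv{\Phi}{\tuple{p}}=\id$ identifies $\deriv{(\Phi_{\tuple{p}}^{-1})}{\vect{p}}$ with the Moore--Penrose pseudo-inverse of $\deriv{\Phi}{\tuple{p}}$. The spectral norm of such a pseudo-inverse equals $1/\varsigma_n(\deriv{\Phi}{\tuple{p}})$, yielding the first equality. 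When $\deriv{\Phi}{\tuple{p}}$ is not injective, $\varsigma_n$ vanishes and both sides equal $\infty$, in agreement with \refeqn{eqn_local_cond}.

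Step two: express the singular values via $U$. I would equip $\Tang{\tuple{p}}{(\Var{M}_1\times\cdots\times\Var{M}_r)}\simeq \Tang{p_1}{\Var{M}_1}\oplus\cdots\oplus\Tang{p_r}{\Var{M}_r}$ with the product inner product inherited from $(\R^N)^r$. For this metric the direct summands are mutually orthogonal, so concatenating the orthonormal bases $U_i$ of the factors produces an orthonormal basis of the full tangent space. The map $\deriv{\Phi}{\tuple{p}}$ sends $(\dot p_1,\ldots,\dot p_r)$ to $\dot p_1+\cdots+\dot p_r$, so writing each $\dot p_i=U_i x_i$ with $x_i\in\R^{n_i}$ shows that, relative to this basis on the domain and the standard basis of $\R^N$ on the image, the matrix representation of $\deriv{\Phi}{\tuple{p}}$ is exactly $U=[U_1\ \cdots\ U_r]$. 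Since both bases are orthonormal, singular values are preserved, yielding $\varsigma_n(\deriv{\Phi}{\tuple{p}})=\varsigma_n(U)$.

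The principal obstacle I anticipate is in step one: carefully justifying that the classical condition number \refeqn{classical_condition}, defined via ambient norms on embedded submanifolds, actually computes the operator norm of the intrinsic derivative. This requires a local chart argument showing that the extrinsic ambient distance on $\Var{M}_1\times\cdots\times\Var{M}_r$ agrees to first order with the intrinsic Riemannian distance induced by the product inner product, so the limit in \refeqn{classical_condition} coincides with the Lipschitz constant of the linearization. Once this is granted, the rest is standard linear algebra (pseudo-inverses and SVDs) together with the direct-sum orthogonality observation in step two.
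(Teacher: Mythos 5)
Your overall strategy is the same as the paper's: reduce $\kappa(\tuple{p})$ to the operator norm of the derivative of the local inverse via a first-order comparison of ambient and intrinsic distances, identify that derivative with the inverse of $\deriv{\Phi}{\tuple{p}}$ on its image so that the norm is $1/\varsigma_n(\deriv{\Phi}{\tuple{p}})$, and then obtain $\varsigma_n(U)$ by concatenating the orthonormal bases of the mutually independent tangent summands. Your step two is exactly the paper's closing computation, and the obstacle you flag in step one is precisely what the paper's \reflem{lem_kind_of_taylor_series} (a Taylor expansion along the projective retraction) supplies.

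There is, however, one genuine gap: the claim that ``a local inverse exists precisely when $\deriv{\Phi}{\tuple{p}}$ is injective'' is false in the direction you need. The local inverse in definition \refeqn{eqn_local_cond} is only a set-theoretic left inverse of $\Phi|_{\Var{N}}$; it is not required to be smooth, so its existence does not force injectivity of the derivative. Concretely, take $\Var{M}_1 = \R\times\{0\}$ and $\Var{M}_2 = \{(t,t^3) : t\in\R\}$ in $\R^2$: the addition map $\Phi\bigl((s,0),(t,t^3)\bigr) = (s+t,\,t^3)$ is globally injective, so a local inverse function exists at the origin, yet both tangent spaces there equal $\R\times\{0\}$ and $\deriv{\Phi}{\tuple{p}}$ has a nontrivial kernel. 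Your proposal disposes of the non-injective case by asserting that both sides are $\infty$ ``in agreement with \refeqn{eqn_local_cond},'' i.e.\ by appealing to the no-local-inverse branch of that definition; in examples like the one above that branch does not apply, and one must instead show directly that the limit in \refeqn{classical_condition} diverges. The paper does this by choosing a unit vector $\vect{v}$ in the kernel of $\deriv{\Phi}{\tuple{p}}$ and a curve $\gamma$ with $\deriv{\gamma}{0}=\vect{v}$, and bounding
\[
\kappa(\tuple{p}) \;\ge\; \lim_{t\to 0}\left( \frac{\|\Phi(\gamma(t))-\Phi(\gamma(0))\|}{\|\gamma(t)-\gamma(0)\|} \right)^{-1} \;=\; \|\deriv{\Phi}{\tuple{p}}\vect{v}\|^{-1} \;=\; \infty .
\]
You need to add this (or an equivalent) argument to cover the case where a local inverse function exists but the derivative is degenerate; the remainder of your proposal is sound.
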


Note that contrary to definition \refeqn{eqn_local_cond}, the expression derived above is efficiently computable, as it suffices computing the least singular value of a matrix.

An important consequence of \refthm{thm_condition_number} is that the condition number $\kappa(\tuple{p})$ is unbounded if the join decomposition of $\Phi(\tuple{p})$ is not locally unique on a smooth submanifold. Under these conditions, the derivative $\deriv{\Phi}{\tuple{p}}$ is not injective, by the contrapositive of \cite[Proposition 4.1]{Lee2013}, so that $\kappa(\tuple{p}) = \infty$ by \refthm{thm_condition_number}.

\begin{corollary} \label{prop_infdecomp_infcond}
If there exists a smooth submanifold $\Var{E} \subset \Var{M}_1 \times \cdots \times \Var{M}_r$ for which $\Phi(\tuple{p}) = \Phi(\tuple{q})$ for all $\tuple{p}, \tuple{q} \in \Var{E}$, then every $\tuple{p}$ in the interior of $\Var{E}$ has $\kappa(\tuple{p}) = \infty$.
\end{corollary}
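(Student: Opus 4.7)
The plan is to deduce the claim directly from the spectral characterization in \refthm{thm_condition_number}: since $\kappa(\tuple{p})$ equals the reciprocal of the smallest singular value of $\deriv{\Phi}{\tuple{p}}$, it suffices to produce a nonzero element in $\ker(\deriv{\Phi}{\tuple{p}})$.

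First I would note that for $\tuple{p}$ in the interior of $\Var{E}$, the tangent space $\Tang{\tuple{p}}{\Var{E}}$ is defined and embeds as a linear subspace of $\Tang{\tuple{p}}{(\Var{M}_1\times\cdots\times\Var{M}_r)}$. The case $\dim \Var{E}=0$ carries no information (the hypothesis is vacuously satisfied by a single point), so one may assume $\dim \Var{E}\geq 1$, giving a nonzero tangent space at every interior point.

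Next, the hypothesis states that $\Phi$ is constant on $\Var{E}$, hence for any smooth curve $\gamma:(-\epsilon,\epsilon)\to \Var{E}$ with $\gamma(0)=\tuple{p}$ the composition $\Phi\circ\gamma$ is constant. Differentiating at $0$ yields
\[
\deriv{\Phi}{\tuple{p}}(\dot{\gamma}(0)) \;=\; 0.
\]
Since every element of $\Tang{\tuple{p}}{\Var{E}}$ arises as such a velocity vector, I would conclude $\Tang{\tuple{p}}{\Var{E}}\subseteq \ker(\deriv{\Phi}{\tuple{p}})$, so $\deriv{\Phi}{\tuple{p}}$ fails to be injective.

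A linear operator with nontrivial kernel has vanishing smallest singular value, so $\varsigma_n(\deriv{\Phi}{\tuple{p}})=0$ where $n=n_1+\cdots+n_r$ is the dimension of the domain. Plugging into \refthm{thm_condition_number} gives $\kappa(\tuple{p})=\infty$. There is no substantive obstacle: the argument is a two-line consequence of \refthm{thm_condition_number} combined with the fact that constant maps have vanishing derivative. The only bookkeeping issue is to ensure that the word \emph{interior} genuinely guarantees a full tangent space; this is automatic for a smooth submanifold, and is the sole reason the hypothesis is phrased in terms of the interior rather than an arbitrary point of $\Var{E}$.
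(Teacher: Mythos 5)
Your proof is correct and takes essentially the same route as the paper's: both arguments reduce the claim to the non-injectivity of $\deriv{\Phi}{\tuple{p}}$ and then invoke \refthm{thm_condition_number}, the only difference being that you establish non-injectivity by differentiating curves in $\Var{E}$ directly (showing $\Tang{\tuple{p}}{\Var{E}} \subseteq \ker \deriv{\Phi}{\tuple{p}}$), whereas the paper cites the contrapositive of a local-injectivity result from Lee. One minor caveat: a $0$-dimensional submanifold need not be a single point, so the hypothesis is not literally vacuous in that case, but the corollary is clearly intended for positive-dimensional $\Var{E}$ and your working assumption $\dim \Var{E} \ge 1$ matches the paper's reading.
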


Another key contribution of this paper is the characterization of the condition number $\kappa(\tuple{p})$ of JDPs at a decomposition $\tuple{p}$ as an inverse distance to a set of \emph{ill-posed} inputs in the spirit of Demmel \cite{Demmel1987}.

\begin{theorem}[Characterization as inverse distance] \label{thm_inverse_distance}
The condition number of the JDP at $\tuple{p}=(p_1,\ldots,p_r)\in \Var{M}_1\times\cdots\times \Var{M}_r$ is
\begin{align*}
\kappa(\tuple{p})
= \frac{1}{\dist\left( (\Tang{p_1}{\Var{M}_1}, \ldots, \Tang{p_r}{\Var{M}_r} ),\GrSigma\right)}
\end{align*}
where the distance measure is defined as in \refeqn{proj-distance}, and the \emph{ill-posed locus} $\Sigma_\Gr$ is
$$
\Sigma_\Gr= \{ (W_1,\ldots,W_r) \in \operatorname{Gr}(N,n_1) \times \cdots \times \operatorname{Gr}(N,n_r) \;|\; \dim (W_1+\cdots+W_r) < n \}
$$
with $\operatorname{Gr}(N,k)$ the Grassmannian of $k$-dimensional subspaces of $\R^N$.
\end{theorem}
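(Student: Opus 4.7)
The plan is to combine Theorem~\ref{thm_condition_number}, which identifies $\kappa(\tuple{p})$ with $1/\varsigma_n(U)$, with a Demmel-style translation of $\varsigma_n(U)$ into a distance on the product of Grassmannians via orthogonal projections. Writing $V_i := \Tang{p_i}{\Var{M}_i}$ and keeping the orthonormal bases $U_i \in \R^{N\times n_i}$ of $V_i$ from Theorem~\ref{thm_condition_number}, I first note the following reformulation of the ill-posed locus: a tuple $(W_1,\ldots,W_r)$ lies in $\GrSigma$ if and only if for any (equivalently every) choice of orthonormal bases $U'_i$ of $W_i$, the block matrix $U' := [U'_1\cdots U'_r]$ satisfies $\varsigma_n(U') = 0$, because $\cspan(U') = W_1 + \cdots + W_r$. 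Consequently it suffices to prove
\[
\varsigma_n(U) \;=\; \dist\bigl((V_1,\ldots,V_r),\,\GrSigma\bigr).
\]

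For the inequality $\varsigma_n(U) \le \dist((V_\bullet),\GrSigma)$, I would fix an arbitrary $(W_\bullet)\in\GrSigma$ and set $\tilde U_i := P_{W_i} U_i$, the columnwise orthogonal projection of $U_i$ onto $W_i$. Since the columns of $\tilde U := [\tilde U_1 \cdots \tilde U_r]$ all lie in $W_1 + \cdots + W_r$ and this sum has dimension strictly less than $n$, we have $\varsigma_n(\tilde U) = 0$, and Weyl's perturbation inequality yields
\[
\varsigma_n(U) \;=\; \varsigma_n(U) - \varsigma_n(\tilde U) \;\le\; \|U - \tilde U\|_2 \;\le\; \|U - \tilde U\|_{\mathrm{F}}.
\]
Each block contributes $\|U_i - \tilde U_i\|_{\mathrm{F}} = \|(I - P_{W_i})U_i\|_{\mathrm{F}} = \|\sin\Theta(V_i,W_i)\|_{\mathrm{F}}$, which is precisely the Frobenius sine distance between $V_i$ and $W_i$. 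Comparing with \refeqn{proj-distance} and infimising over $(W_\bullet)\in\GrSigma$ completes this direction.

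For the reverse inequality I would construct an explicit $(W_\bullet)\in\GrSigma$ attaining the bound. Assuming $\sigma := \varsigma_n(U) > 0$, let $u,v$ be the associated unit singular vectors ($Uv = \sigma u$, $U^\top u = \sigma v$), and partition $v = (v_1,\ldots,v_r)$ compatibly with the block structure of $U$. Restricting the second equation to block $i$ gives $U_i^\top u = \sigma v_i$, whence $\langle U_i v_i, u\rangle = \sigma\|v_i\|^2$ and $P_{V_i} u = \sigma U_i v_i$. Define
\[
W_i \;:=\; \bigl(V_i \cap (U_i v_i)^\perp\bigr) \;+\; \cspan\bigl(U_i v_i - \sigma \|v_i\|^2 u\bigr).
\]
Summing across $i$ yields $\sum_i (U_i v_i - \sigma\|v_i\|^2 u) = Uv - \sigma u \sum_i \|v_i\|^2 = \sigma u - \sigma u = 0$, so $W_1 + \cdots + W_r$ has dimension strictly less than $n$ and $(W_\bullet)\in\GrSigma$. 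Since $W_i$ differs from $V_i$ in a single direction, it shares $n_i - 1$ vanishing principal angles with $V_i$; the two identities above produce the unique nonzero principal angle $\theta_i$ with $\sin\theta_i = \sigma\|v_i\|$. Summing squared sines and using $\sum_i \|v_i\|^2 = 1$ yields $\dist((V_\bullet),(W_\bullet))^2 = \sum_i \sin^2\theta_i = \sigma^2$, matching the upper bound.

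The main obstacle is the attainment step: producing a rank-reducing perturbation of $(V_\bullet)$ whose product-Grassmannian cost is precisely $\sigma$. The right singular vector of $U$ supplies both the correct directions $U_i v_i$ along which to tilt each $V_i$ and the weights $\|v_i\|^2$ under which the local perturbations $-\sigma\|v_i\|^2 u$ telescope to zero, creating the required global linear dependency across the blocks.
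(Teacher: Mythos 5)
Your overall strategy---reduce to \refthm{thm_condition_number} and prove $\varsigma_n(U)=\dist((V_1,\ldots,V_r),\GrSigma)$ directly via projections and singular vectors---is sound, and your attainment construction in the second direction is correct and more explicit than the paper's proof (which routes both inequalities through the sets $\calZ{E}_X$ of subspace tuples containing the columns of a rank-deficient unit-column matrix $X$, via \reflem{lem_matrixnorms3}, \reflem{characterization_of_sigma_gr}, the Eckart--Young theorem, and a cited lemma of Amelunxen--B\"urgisser). Your computation $\sin\theta_i=\sigma\|v_i\|$ checks out, the telescoping identity $\sum_i(U_iv_i-\sigma\|v_i\|^2u)=0$ does place $(W_1,\ldots,W_r)$ in $\GrSigma$, and since each $W_i$ differs from $V_i$ in a single principal angle, the per-factor distance is unambiguous whichever sine distance one uses. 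You should, however, flag the degenerate case $\sigma\|v_i\|=1$, where your tilting vector $U_iv_i-\sigma\|v_i\|^2u$ vanishes; it forces $\sigma=1$ and mutually orthogonal tangent spaces, and needs a one-line separate treatment.

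The genuine gap is in the first direction. The distance \refeqn{proj-distance} is built from $\dist(W_i,W_i')=\|\Pi_{W_i}-\Pi_{W_i'}\|$ with $\|\cdot\|$ the \emph{spectral} norm (see the paper's notation), i.e.\ the sine of the \emph{largest} principal angle only---not the Frobenius sine distance you invoke. Your chain ends with $\varsigma_n(U)\le\|U-\tilde U\|_F=\bigl(\sum_i\|\sin\Theta(V_i,W_i)\|_F^2\bigr)^{1/2}$, and this right-hand side is in general \emph{larger} than $\dist((V_1,\ldots,V_r),(W_1,\ldots,W_r))$ as defined in \refeqn{proj-distance}, so ``comparing with \refeqn{proj-distance}'' does not close this direction: the final inequality points the wrong way. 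The repair is short: stay with spectral norms throughout. Weyl gives $\varsigma_n(U)\le\|U-\tilde U\|_2$; the block-column bound $\|\begin{bmatrix}A_1&\cdots&A_r\end{bmatrix}\|_2^2\le\sum_i\|A_i\|_2^2$ (Cauchy--Schwarz applied to $\|\sum_iA_i\vect{x}_i\|$) gives $\|U-\tilde U\|_2^2\le\sum_i\|(I-\Pi_{W_i})U_i\|_2^2$; and $\|(I-\Pi_{W_i})U_i\|_2=\sin\theta_{\max}(V_i,W_i)=\|\Pi_{V_i}-\Pi_{W_i}\|$ because $\dim V_i=\dim W_i$. With that substitution your argument is complete and constitutes a genuinely different, arguably more elementary, route than the paper's.
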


The last main contribution concerns the behavior of the condition number near the boundary of a join set.
Indeed, many join sets $\Var{J} = \operatorname{Join}(\Var{M}_1,\ldots,\Var{M}_r)$ are not closed in the Euclidean topology; see \refrem{rem_open_boundary_exists}. Consequently, there may exist \emph{convergent} sequences $\vect{p}_0, \vect{p}_1, \ldots$ with every $\vect{p}_i \in \Var{J}$ but for which nevertheless $\lim_{i\to\infty} \vect{p}_i \to \vect{p}_\star \not\in \Var{J}$. In the context of tensor rank decompositions, de Silva and Lim \cite{dSL2008} explained what may be expected of such sequences: for $\tuple{p}_i = (p_1^{(i)}, \ldots, p_r^{(i)}) \in \Var{M}_1 \times \cdots \times \Var{M}_r$ with $\vect{p}_i = \Phi(\tuple{p}_i)$ some of the summands $p_j^{(i)}$ become of unbounded norm when $\vect{p}_i \to \vect{p}_\star$. In fact, this result generalizes to join decompositions. Moreover, we show that also \emph{the condition number of the $\tuple{p}_i$'s becomes unbounded} in this case. This provides a more rigorous criterion for determining if a sequence $\vect{p}_i \in \Var{J}$ is converging to a $\vect{p}_\star \not\in \Var{J}$.

\begin{theorem} \label{thm_boundary_points_cn}
Assume that
$
\JOIN(\Var{M}_1,\ldots,\Var{M}_r) = \JOIN(\overline{\Var{M}_1},\ldots,\overline{\Var{M}_r}),
$
where the overline denotes Euclidean closure, and that $\Var{M}_i$ is a cone.\footnote{We call a set $C\subseteq \R^N$ a cone, if $p\in C$ implies that $\alpha p\in C$ for all $\alpha\in\R\backslash\set{0}$.} Let $\vect{p}_i(t) \subset \Var{M}_i$, $1\leq i\leq r$, be a set of $r$ smooth curves simultaneously defined for all $t \in (0, 1)$.
If
\[
 \lim_{t \to 0}  (\vect{p}_1(t)+\ldots+ \vect{p}_r(t)) = \vect{p}_\star \;\not\in \JOIN(\Var{M}_1,\ldots,\Var{M}_r)
\]
then the condition number grows without bound:
\[
\lim_{t\to0} \kappa( \vect{p}_1(t),\ldots,\vect{p}_r(t) ) \to \infty;
\]
moreover, at least two summands diverge:
\[
\exists 1 \le i_\star < j_\star \le r: \lim_{t\to0} \| \vect{p}_{i_\star}(t) \| \to \infty \text{ and } \lim_{t\to0} \|\vect{p}_{j_\star}(t)\| \to \infty.
\]
\end{theorem}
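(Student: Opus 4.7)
The central observation is that the cone hypothesis forces each radial vector into the corresponding tangent space: differentiating the curve $\alpha\mapsto\alpha\vect{p}_i$, which lies in $\Var{M}_i$ near $\alpha=1$, gives $\vect{p}_i\in\Tang{\vect{p}_i}{\Var{M}_i}$. Hence the unit vector $\vect{u}_i(t):=\vect{p}_i(t)/\|\vect{p}_i(t)\|$ is a distinguished element of $\Tang{\vect{p}_i(t)}{\Var{M}_i}$; this ``radial tangent vector'' will both detect divergent summands and drive the breakdown of $\varsigma_n(U(t))$.

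To show at least two summands diverge in norm, I argue by contradiction. Assume at most one curve, say $\vect{p}_1(t)$, satisfies $\lim_{t\to0}\|\vect{p}_i(t)\|=\infty$. Then for each $i\geq 2$ one has $\liminf_{t\to0}\|\vect{p}_i(t)\|<\infty$, so one may successively refine to a common subsequence $t_k\to 0$ along which $\|\vect{p}_2(t_k)\|,\ldots,\|\vect{p}_r(t_k)\|$ are uniformly bounded, and then, by Bolzano--Weierstrass, converge to limits $\vect{q}_i\in\overline{\Var{M}_i}$. If $\|\vect{p}_1(t)\|$ is also bounded, a final extraction yields $\vect{q}_1\in\overline{\Var{M}_1}$ and hence $\vect{p}_\star=\sum_i\vect{q}_i\in\JOIN(\overline{\Var{M}_1},\ldots,\overline{\Var{M}_r})=\JOIN(\Var{M}_1,\ldots,\Var{M}_r)$, contradicting the hypothesis. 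Otherwise $\lim_{t\to 0}\|\vect{p}_1(t)\|=\infty$, yet the identity $\vect{p}_1(t_k)=\Phi(\tuple{p}(t_k))-\sum_{i\geq 2}\vect{p}_i(t_k)$ forces $\vect{p}_1(t_k)$ to have a finite limit, again a contradiction.

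For the condition number, set $M(t):=\max_{i}\|\vect{p}_i(t)\|$, which tends to $+\infty$ by the previous step, and $c_i(t):=\|\vect{p}_i(t)\|/M(t)\in[0,1]$. Then
\[
\sum_{i=1}^{r} c_i(t)\,\vect{u}_i(t) \;=\; \frac{\Phi(\tuple{p}(t))}{M(t)} \;\longrightarrow\; 0,
\]
since the numerator converges to $\vect{p}_\star$. Expanding $\vect{u}_i(t)=U_i(t)\vect{w}_i(t)$ with $\|\vect{w}_i(t)\|=1$ in the orthonormal basis $U_i(t)$ of $\Tang{\vect{p}_i(t)}{\Var{M}_i}$ supplied by \refthm{thm_condition_number}, and assembling $\vect{v}(t):=(c_1(t)\vect{w}_1(t),\ldots,c_r(t)\vect{w}_r(t))\in\R^{n}$, we obtain $U(t)\vect{v}(t)\to 0$, while $\|\vect{v}(t)\|^{2}=\sum_i c_i(t)^{2}\geq 1$ because at least one $c_i(t)$ equals $1$ at every $t$. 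Consequently $\varsigma_n(U(t))\leq \|U(t)\vect{v}(t)\|/\|\vect{v}(t)\|\to 0$, and \refthm{thm_condition_number} delivers $\kappa(\vect{p}_1(t),\ldots,\vect{p}_r(t))\to\infty$.

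The main obstacle is translating the geometric divergence of summand norms into the spectral degeneracy of $U(t)$; the cone assumption is what enables this, by exhibiting an \emph{explicit} nontrivial linear combination of the radial tangent vectors $\vect{u}_i(t)\in\Tang{\vect{p}_i(t)}{\Var{M}_i}$ that vanishes in the limit. Without the cone structure one would have to invoke compactness in a product of Grassmannians and identify the limit ill-posed configuration abstractly through \refthm{thm_inverse_distance}, which is considerably more intricate.
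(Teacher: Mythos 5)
Your proof is correct and follows the same overall strategy as the paper's: rule out bounded subsequences of $\tuple{p}(t)$ via the closure assumption and Bolzano--Weierstrass, deduce that summands must diverge, and then exploit the cone structure (the radial unit vector $\vect{p}_i(t)/\|\vect{p}_i(t)\|$ lies in $\Tang{\vect{p}_i(t)}{\Var{M}_i}$) to exhibit an explicit vector of norm at least $1$ that $U(t)$ sends to something vanishing. Where you genuinely differ is in the construction of that vector. The paper singles out one divergent index $i_\star$, writes $\vect{p}_{i_\star}(t)=\alpha(t)\vect{q}(t)$ and $\sum_{j\ne i_\star}\vect{p}_j(t)=\beta(t)\widetilde{\vect{q}}(t)$ with $\vect{q},\widetilde{\vect{q}}$ on the unit sphere, proves $\langle\vect{q}(t),\widetilde{\vect{q}}(t)\rangle\to-1$ from the boundedness of the sum, and concludes $U(t)\vect{x}(t)=\vect{q}(t)+\widetilde{\vect{q}}(t)\to 0$. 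You instead normalize every component uniformly by $M(t)=\max_i\|\vect{p}_i(t)\|$, so that $U(t)\vect{v}(t)=\Phi(\tuple{p}(t))/M(t)\to 0$ is immediate and the inner-product computation disappears entirely; moreover your argument for the blow-up of $\kappa$ needs only the unboundedness of $\tuple{p}(t)$, not the stronger two-divergent-summands conclusion. This is a cleaner execution of the same idea. One caveat on the divergence part: your ``successive refinement'' to a common subsequence along which all of $\|\vect{p}_2(t_k)\|,\ldots,\|\vect{p}_r(t_k)\|$ are bounded does not follow from each index separately having finite $\liminf$ --- for $r\ge 3$ the bounded subsequences for different indices could be disjoint. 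The paper's proof makes an equivalent leap when it extracts a fixed $i_\star$ with $\|\vect{p}_{i_\star}(t)\|\to\infty$ directly from $\|\tuple{p}(t)\|\to\infty$, so you are no worse off than the published argument, but neither treatment fully handles curves whose component norms oscillate.
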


\subsection{Outline}
The rest of this paper is structured as follows.
In \refsec{sec_condition_number} we derive the condition number of the JDP and prove \refthm{thm_condition_number}. We characterize it as an inverse distance in \refsec{sec_def_condition_number}, establishing \refthm{thm_inverse_distance}. The condition number of boundary points of join sets is investigated in \refsec{sec_boundary_points}, and we prove \refthm{thm_boundary_points_cn}. In \refsec{sec_examples} the discussion is specialized to two important join decompositions, namely the Waring and tensor rank decompositions. Some numerical experiments involving tensor rank decomposition problems are presented in \refsec{sec_numerical_experiments}. The final section summarizes our main conclusions.

\subsection{Notation}
We fix some notation for the rest of this article. Vectors are typeset in lowercase boldface letters ($\vect{p}$), matrices in uppercase ($U$), tensors in an uppercase Fraktur font ($\tensor{A}$), tuples in a lowercase Fraktur font ($\tuple{p}$), and manifolds, varieties and join sets in an uppercase calligraphic font ($\Var{M}$).

Let $A \in \R^{m \times n}$. The transpose of $A$ is denoted by $A^T$, and $A$'s pseudo-inverse is denoted by $A^\dagger$. The $i$th column of $A$ is denoted by $\vect{a}_i$. We denote by $\varsigma_k(A)$ the $k$th largest singular value of $A$. Its spectral norm is $\Norm{A} := \varsigma_1(A)$. The rank of $A$ is $\rank(A)$. The $m \times m$ identity matrix is denoted by $I_m$.

We define $\R^m_0 := \R^m\setminus\set{0}$.
The standard inner product on $\bb{R}^n$ is denoted by $\langle \vect{x},\vect{y} \rangle =\vect{x}^T \vect{y}$, which induces the norm $\Norm{\vect{x}} =\sqrt{\langle \vect{x}, \vect{x}\rangle}$. Given $\vect{x}, \vect{y}\in\bb{R}^{n}_0$ the angle between $\vect{x}$ and $\vect{y}$ is $\sphericalangle(\vect{x},\vect{y}) := \arccos\frac{ \langle \vect{x},\vect{y}\rangle}{\Norm{\vect{x}}\Norm{\vect{y}}}$. The unit sphere in $\bb{R}^n$ is denoted by $\bb{S}(\bb{R}^n):=\cset{\vect{x}\in\bb{R}^n}{\Norm{\vect{x}}=1}$. The ball of radius~$r$ around $x \in X$ is $B_r(x) = \cset{y}{\|x-y\| \le r}$, where $X$ is some space and the norm is understood from the context.

Let $\Var{X}$ be a smooth manifold. If $F: \Var{X} \to \R^N$ is a differentiable function on $\Var{X}$, then we denote its derivative at $x\in \Var{X}$ by $\deriv{F}{x}$.

Throughout this paper, $\Var{M}_i \subset \R^N$ denotes an $n_i$-dimensional smooth embedded submanifold of $\R^N$ for $i=1,\ldots,r$, and $\Var{J} = \JOIN(\Var{M}_1, \ldots, \Var{M}_r) \subset \R^N$ denotes their join set. The product manifold $\Var{M}_1 \times \cdots \times \Var{M}_r$ is denoted by~$\Var{P}$. Its dimension is denoted by $n = \sum_{i=1}^r n_i$. The map $\Phi: \Var{P} \to \Var{J}$ is defined as in \refeqn{eqn:phi}.

\section{The condition number of JDPs}\label{sec_condition_number}
This section derives an efficiently computable expression for the condition number of the JDP at a particular decomposition.
As we are dealing with the inverse of a smooth function on a product manifold, the natural starting point of our analysis is the differential-geometric framework of condition \cite{BCSS,BC2013} that applies to smooth maps between manifolds. However, $\Phi$ is not a map between smooth manifolds since $\Var{J}$ is not necessarily a manifold. Indeed, in the specific case that the $\Var{M}_i$ are smooth \emph{algebraic varieties}, i.e., $\Var{M}_i$ is the solution set of a collection of polynomial equations, then the join set is a \emph{semi-algebraic set} by the Tarski--Seidenberg principle \cite{BCR1998} as it corresponds to the projection onto the last factor of the graph of $\Phi$, which is an algebraic set. Such sets generally have \emph{singular points} where the geometric tangent space \cite[Chapter 3]{Lee2013} is not defined. For overcoming this obstacle, the key observation is that we can localize the analysis of \cite[Section 14.3]{BC2013}, \emph{hereby generalizing the differential geometric framework to local inverses of smooth functions on manifolds.} This is performed in the next subsection. In \refsec{sec_relative_condition}, we propose relative condition numbers for the JDP.

\subsection{The local differential-geometric approach} \label{sec_diffgeometric_cn}
The proof of \refthm{thm_condition_number} is presented in this subsection.
We seek to obtain the condition number of the inverse of a smooth function on a manifold. The first helpful result is stated next.
\begin{lemma}\label{lem_kind_of_taylor_series}
Let $\Var{M} \subset \R^N$ be an embedded submanifold. Let $F : \Var{M} \to \R^p$ be a smooth function on $\Var{M}$. Let $\vect{x} \in \Var{M}$. There exist constants $r_F > 0$ and $\gamma_F \ge 0$ such that for all $\vect{y} \in B_{r_F}(\vect{x}) \cap \Var{M}$ we have $\Delta = (\vect{y}-\vect{x}) \in \R^N$ and
\begin{align*}
F(\vect{y}) = F(\vect{x}) + (\deriv{F}{\vect{x}}) P_{\Tang{\vect{x}}{\Var{M}}} \Delta + \vect{v}_{\vect{x},\vect{y}}, \text{ where } \|\vect{v}_{\vect{x},\vect{y}}\| \le \gamma_F \|\Delta\|^2,
\end{align*}
and $P_A$ denotes the orthogonal projection onto the linear subspace $A \subset \R^N$.
\end{lemma}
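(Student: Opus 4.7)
The plan is to reduce the statement to ordinary Taylor expansion in $\R^N$ by extending $F$ to an ambient smooth function and then controlling the normal component of the ambient displacement $\Delta$ via the local graph structure of an embedded submanifold.

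First I would invoke two standard local facts about embedded submanifolds. (i) Since $F$ is smooth on $\Var{M}$, there exists an open neighborhood $W \subset \R^N$ of $\vect{x}$ and a smooth extension $\widetilde F: W \to \R^p$ with $\widetilde F|_{W \cap \Var{M}} = F$; this can be obtained either from the definition of smoothness on embedded submanifolds or through a tubular neighborhood argument. (ii) Near $\vect{x}$, the submanifold $\Var{M}$ is the graph of a smooth map $g: U \to \Tang{\vect{x}}{\Var{M}}^\perp$ with $U$ an open neighborhood of $0$ in $\Tang{\vect{x}}{\Var{M}}$, $g(0)=0$ and $\deriv{g}{0}=0$; that is, every point of $\Var{M}$ close enough to $\vect{x}$ has the form $\vect{y} = \vect{x} + u + g(u)$. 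Together these give me the decomposition $\Delta = P_{\Tang{\vect{x}}{\Var{M}}}\Delta + P_{\Tang{\vect{x}}{\Var{M}}^\perp}\Delta = u + g(u)$.

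Next I would apply the ordinary first-order Taylor theorem with Lagrange remainder to $\widetilde F$ on $W$: for all $\vect{y}$ in a sufficiently small ball $B_{r}(\vect{x})$, one has
\[
 \widetilde F(\vect{y}) = \widetilde F(\vect{x}) + (\deriv{\widetilde F}{\vect{x}})\Delta + \vect{e}(\vect{x},\vect{y}), \qquad \|\vect{e}(\vect{x},\vect{y})\| \le C_1 \|\Delta\|^2,
\]
where $C_1$ depends on a bound for $\|D^2 \widetilde F\|$ on a compact neighborhood of $\vect{x}$. Because $\widetilde F$ restricted to $\Var{M}$ equals $F$, the restriction of $\deriv{\widetilde F}{\vect{x}}$ to $\Tang{\vect{x}}{\Var{M}}$ agrees with the intrinsic derivative $\deriv{F}{\vect{x}}$; in particular $(\deriv{\widetilde F}{\vect{x}}) P_{\Tang{\vect{x}}{\Var{M}}}\Delta = (\deriv{F}{\vect{x}}) P_{\Tang{\vect{x}}{\Var{M}}}\Delta$.

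It then remains to absorb the normal contribution $(\deriv{\widetilde F}{\vect{x}}) P_{\Tang{\vect{x}}{\Var{M}}^\perp}\Delta$ into a quadratic error. Here the graph representation is essential: since $g(0)=0$ and $\deriv{g}{0}=0$, a Taylor bound for $g$ gives $\|g(u)\| \le C_2 \|u\|^2$ on a small ball, hence $\|P_{\Tang{\vect{x}}{\Var{M}}^\perp}\Delta\| = \|g(u)\| \le C_2 \|u\|^2 \le C_2 \|\Delta\|^2$. Thus
\[
 \|(\deriv{\widetilde F}{\vect{x}}) P_{\Tang{\vect{x}}{\Var{M}}^\perp}\Delta\| \le \|\deriv{\widetilde F}{\vect{x}}\|\,C_2\,\|\Delta\|^2 ,
\]
and setting $\vect{v}_{\vect{x},\vect{y}}$ equal to this normal term plus $\vect{e}(\vect{x},\vect{y})$, with $r_F$ the minimum of the two radii of validity and $\gamma_F := C_1 + C_2 \|\deriv{\widetilde F}{\vect{x}}\|$, yields the claim. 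The one point deserving care, and the main (mild) obstacle, is verifying that the quadratic bound on $\|g(u)\|$ really is controlled by $\|\Delta\|^2$ rather than by $\|u\|^2$ alone; this is clear because $\|u\| = \|P_{\Tang{\vect{x}}{\Var{M}}}\Delta\| \le \|\Delta\|$, but it needs to be noted explicitly so that the final constant $\gamma_F$ depends only on $\vect{x}$ and $F$ and not on $\vect{y}$.
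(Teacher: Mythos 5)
Your proof is correct, but it takes a genuinely different route from the paper's. The paper works with the metric projection $P_{\Var{M}}$ onto the manifold and the associated projective retraction $\widehat{R}_{\vect{x}}(\eta) = P_{\Var{M}}(\vect{x}+\eta)$ of Absil and Malick: it observes that $\widehat{R}_{\vect{x}}(\Delta) = P_{\Var{M}}(\vect{y}) = \vect{y}$ because $\vect{y}$ already lies on $\Var{M}$, then Taylor-expands the one-variable function $G(t) = F(\widehat{R}_{\vect{x}}(t\eta))$ with $\eta = \Delta/\|\Delta\|$ and uses the identity $\deriv{P_{\Var{M}}}{\vect{x}} = P_{\Tang{\vect{x}}{\Var{M}}}$ to produce the projector in the first-order term. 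You instead extend $F$ to an ambient smooth $\widetilde{F}$, write $\Var{M}$ locally as a graph over $\Tang{\vect{x}}{\Var{M}}$, apply ordinary Taylor expansion in $\R^N$, and absorb the normal component $P_{\Tang{\vect{x}}{\Var{M}}^\perp}\Delta = g(u)$ into the quadratic remainder via $g(0)=0$, $\deriv{g}{0}=0$. Both arguments are sound. The paper's version gets the projector "for free" as the derivative of the metric projection, at the cost of invoking the (nontrivial) smoothness of $P_{\Var{M}}$ near $\Var{M}$ and leaving implicit that the $\mathcal{O}(\tau^2)$ constant in the expansion of $G$ is uniform over the direction $\eta$ (which requires a compactness remark). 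Your version is more elementary and makes the uniformity of $\gamma_F$ in $\vect{y}$ explicit, which you rightly flag as the one point needing care; the only prerequisites are the local extension of smooth functions off an embedded submanifold and the graph representation, both standard.
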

\begin{proof}
Let $P_\Var{M} : \R^N \to \Var{M}, \vect{z} \mapsto \arg\min_{\vect{y}\in\Var{M}} \|\vect{z}-\vect{y} \|$ be the projection onto the manifold $\Var{M}$, which is a smooth function in the neighborhood of $\vect{x}\in\Var{M}$ \cite[Lemma~4]{AM2012}. By definition we have $\vect{x} = P_{\Var{M}} \vect{x}$ for all $\vect{x} \in \Var{M}$.
By \cite[Proposition 5]{AM2012}, the map $\widehat{R}_{\vect{x}} : \R^N \to \Var{M},\; \eta \mapsto P_{\Var{M}} (\vect{x} + \eta)$ restricted to $\Tang{\vect{x}}{\Var{M}}$ is the \emph{projective retraction}, which is a smooth and well-defined retraction for all $\xi \in \Tang{\vect{x}}{\Var{M}}$ in a neighborhood of $0_\vect{x}$. Let $\tau = \|\Delta\|$ and fix $\eta = \Delta \tau^{-1}$. Consider then $G : \R \to \R^{N},\; t \mapsto F(\widehat{R}_{\vect{x}}(t \eta)).$ By definition it admits a Taylor series approximation
$$G(\tau) = G(0) + \deriv{G}{0} \cdot \tau + \mathcal{O}( \tau^2 ),$$
which is well defined in a neighborhood of~$0$.
We have $G(0) = F(\widehat{R}_{\vect{x}}(0)) = F(\vect{x})$ and $G(\tau) = F(\widehat{R}_{\vect{x}}(\Delta)) = F(P_{\Var{M}} \vect{y}) = F(\vect{y}).$ Moreover, it follows from the chain rule that
$
\deriv{G}{0}
= \deriv{F}{\widehat{R}_{\vect{x}}(0)} \circ \deriv{\widehat{R}_{\vect{x}}}{0} \circ \eta
= \deriv{F}{\vect{x}} \circ ( \deriv{P_{\Var{M}}}{\vect{x}} ) \circ \eta$. Here $\eta$ also denotes the map $t \mapsto t\eta$. As $\deriv{P_{\Var{M}}}{\vect{x}} = P_{\Tang{\vect{x}}{\Var{M}}}$ by \cite[Lemma~4]{AM2012}, we have $\deriv{G}{0}= \deriv{F}{\vect{x}} \circ P_{\Tang{\vect{x}}{\Var{M}}} \circ \eta.$
\end{proof}

The next ingredient we need is a standard fact in differential geometry.
\begin{lemma}\label{lem_local_diffeomorphism}
Let $F : \Var{M} \to \Var{N}$ be a smooth map between manifolds $\Var{M}$ and $\Var{N}$. If the derivative $\deriv{F}{x} : \Tang{x}{\Var{M}} \to \Tang{F(x)}{\Var{N}}$ is injective at $x \in \Var{M}$, then $F$ is a local diffeomorphism onto its image.
\end{lemma}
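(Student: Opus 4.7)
The plan is to reduce the statement to a standard consequence of the rank theorem from differential topology. \emph{First}, since $\deriv{F}{x}$ is injective, $F$ is by definition an immersion at $x$, which means $\rank \deriv{F}{x} = m := \dim \Var{M}$. By lower semicontinuity of the rank of a smooth map (the set where the rank is at least $m$ is open in $\Var{M}$), the derivative $\deriv{F}{y}$ has rank $m$ for every $y$ in some open neighborhood $U_0$ of $x$; in particular, $F|_{U_0}$ is a smooth map of constant rank $m$.

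\emph{Next}, I would invoke the rank theorem (e.g.\ \cite[Theorem 4.12]{Lee2013}): after possibly shrinking $U_0$, there exist smooth coordinate charts $(U,\varphi)$ around $x$ and $(V,\psi)$ around $F(x)$, with $F(U)\subseteq V$, such that the coordinate representative $\psi\circ F\circ \varphi^{-1}$ takes the canonical form $(t_1,\ldots,t_m) \mapsto (t_1,\ldots,t_m,0,\ldots,0)$. This expression makes $F|_U$ manifestly injective and simultaneously identifies $F(U)$ as the slice where the last $\dim \Var{N}-m$ coordinates of $\psi$ vanish. Hence $F(U)$ is an embedded $m$-dimensional submanifold of $\Var{N}$, with the smooth structure inherited from the slice chart provided by~$\psi$.

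\emph{Finally}, endowing $F(U)$ with this submanifold structure, the restriction $F|_U : U \to F(U)$ is a smooth bijection, and its inverse can be written as the composition of $\psi|_{F(U)}$, the projection onto the first $m$ coordinates, and $\varphi^{-1}$, hence is smooth. This is exactly what it means for $F$ to be a local diffeomorphism onto its image. I do not anticipate any substantial obstacle: the lemma is essentially a textbook restatement of the rank theorem, with the only mild subtlety being that the image is a priori just a subset of $\Var{N}$, so one must explicitly specify the smooth structure on $F(U)$ that makes the statement meaningful.
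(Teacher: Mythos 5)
Your proposal is correct and follows essentially the same route as the paper, whose proof simply cites Proposition 4.1 (injective derivative implies immersion near $x$), Theorem 4.25 (immersions are local embeddings, proved via the rank theorem you invoke), and Proposition 5.2 (an embedding is a diffeomorphism onto its image, an embedded submanifold) of Lee's book. You have merely unpacked that chain of standard results explicitly, including the slice-chart description of the smooth structure on $F(U)$, which is a fine and complete way to do it.
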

\begin{proof}
Combine Proposition 4.1, Theorem 4.25 and Proposition 5.2 of \cite{Lee2013}.
\end{proof}

This lemma essentially proves the case in \refthm{thm_condition_number} where $\Phi$ has no unique local inverse, i.e., where the JDP is ill-posed.
\begin{corollary}\label{cor_inf_cond}
If $\Phi$ is not locally invertible at $\tuple{p} \in \Var{P}$, then $\kappa(\tuple{p}) = \infty = \frac{1}{\varsigma_n(\deriv{\Phi}{\tuple{p}})}$.
\end{corollary}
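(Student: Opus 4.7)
The plan is to prove the two equalities separately and both follow almost immediately from definitions together with \reflem{lem_local_diffeomorphism}. First, the equality $\kappa(\tuple{p}) = \infty$ is direct from the definition \refeqn{eqn_local_cond}: by hypothesis there is no local inverse $\Phi_{\tuple{p}}^{-1}$ at $\tuple{p}$, so $\kappa(\tuple{p})$ is set to $\infty$ by the second case.

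For the second equality $\frac{1}{\varsigma_n(\deriv{\Phi}{\tuple{p}})}=\infty$, I would argue by contraposition using \reflem{lem_local_diffeomorphism}. Suppose, to the contrary, that $\deriv{\Phi}{\tuple{p}}$ is injective. Applying \reflem{lem_local_diffeomorphism} to the smooth map $\Phi : \Var{P} \to \R^N$ (viewing $\R^N$ as a trivial ambient manifold) yields that $\Phi$ is a local diffeomorphism onto its image in a neighborhood $\Var{N}$ of $\tuple{p}$. In particular, its smooth inverse on $\Phi(\Var{N})$ is a local inverse function in the sense of the definition preceding \refeqn{eqn_local_cond}, contradicting the hypothesis that $\Phi$ is not locally invertible at $\tuple{p}$. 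Hence $\deriv{\Phi}{\tuple{p}}$ must fail to be injective, i.e., it has nontrivial kernel.

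The last step is to translate this into a statement about singular values. The domain of $\deriv{\Phi}{\tuple{p}}$ is $\Tang{\tuple{p}}{\Var{P}}$, which has dimension $n=n_1+\cdots+n_r$; hence the derivative has exactly $n$ singular values (counted with multiplicity). A nontrivial kernel forces the smallest of these to vanish, so $\varsigma_n(\deriv{\Phi}{\tuple{p}})=0$ and consequently $\frac{1}{\varsigma_n(\deriv{\Phi}{\tuple{p}})} = \infty = \kappa(\tuple{p})$, completing the proof.

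There is no substantial obstacle here; the only subtle point to double-check is that the notion of ``local invertibility'' used in the definition \refeqn{eqn_local_cond} is genuinely implied by being a local diffeomorphism onto the image. This is immediate since the smooth inverse provided by \reflem{lem_local_diffeomorphism} satisfies $\Phi_{\tuple{p}}^{-1} \circ \Phi|_{\Var{N}} = \operatorname{id}_{\Var{N}}$ by construction, which is exactly what the definition demands.
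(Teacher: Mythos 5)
Your proof is correct and follows essentially the same route as the paper: the first equality is read off from definition \refeqn{eqn_local_cond}, and the second is the contrapositive of \reflem{lem_local_diffeomorphism} applied to $\Phi:\Var{P}\to\R^N$, with non-injectivity of $\deriv{\Phi}{\tuple{p}}$ forcing $\varsigma_n(\deriv{\Phi}{\tuple{p}})=0$. The only cosmetic difference is that the paper first disposes of the case $n>N$ separately before invoking the contrapositive, whereas your rank--nullity observation handles both cases uniformly.
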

\begin{proof}
If $n = \dim \Var{P} > N$, then $\Phi$ is not a local homeomorphism and hence $\varsigma_n(\deriv{\Phi}{\tuple{p}})=0$. Otherwise, the contrapositive of \reflem{lem_local_diffeomorphism} entails that $\deriv{\Phi}{\tuple{p}}$ cannot be injective. Since $n \le N$ in this case, $\deriv{\Phi}{\tuple{p}}$ is not of full rank $n$.
\end{proof}
We are now ready to prove the general case. The proof strategy is {quite} standard.
\begin{proof}[Proof of \refthm{thm_condition_number}]
Let $\tuple{p} \in \Var{P}$. If $\Phi$ is not locally invertible at $\tuple{p}$, then by \refcor{cor_inf_cond}, $\kappa(\tuple{p})=\infty$. Note that this includes the case $n > N$, hence we can assume $n \le N$ in the remainder. It remains to prove \refthm{thm_condition_number} in the case when there exists a local inverse function $\Phi^{-1}_{\tuple{p}}$ of $\Phi$ at $\vect{p} = \Phi(\tuple{p})$.
In this case, there exists some open neighborhood $\Var{N}$ of $\tuple{p} \in \Var{P}$ such that  $\Phi^{-1}_{\tuple{p}} \circ \Phi|_{\Var{N}} = \operatorname{id}_{\Var{N}}$.
Assume first that $\deriv{\Phi}{\tuple{p}}$ is injective.
It follows from \reflem{lem_local_diffeomorphism} that $\Phi$ is a local diffeomorphism onto its image, i.e., there exists some open neighborhood $\Var{N}' \subset \Var{N}$ of $\tuple{p}$ such that $\Var{J}' := \Phi(\Var{N}') \subset \Var{J}$ is an embedded submanifold of $\Var{J}$ in the subspace topology. We may assume without loss of generality that the domain of $\Phi_{\tuple{p}}^{-1}$ is this manifold $\Var{J}'$. Since $\Phi|_{\Var{N}'}$ is diffeomorphic to $\Var{J}'$, it follows that $\dim \Var{N}' = \dim \Var{J}'$ and that $\deriv{\Phi}{\tuple{p}}$ is invertible.
By applying \reflem{lem_kind_of_taylor_series} to $\Phi_{\tuple{p}}^{-1}$, we find
\begin{align}\label{eqn_classic_0}
\kappa(\tuple{p})
= \kappa[\Phi_{\tuple{p}}^{-1}](\vect{p})
= \lim_{\epsilon\to0} \; \max_{\vect{y} \in (B_{\epsilon}(\vect{p}) \cap \Var{J}')} \frac{\|\deriv{\Phi^{-1}_{\tuple{p}}}{\vect{p}} P_{\Tang{\vect{p}}{\Var{J}'}} (\vect{y} - \vect{p}) \|}{\|\vect{y} - \vect{p}\|} + \mathcal{O}( \| \vect{y} - \vect{p} \| ).
\end{align}
For sufficiently small $\epsilon$, $B_{\epsilon}(\vect{p}) \cap \Var{J}'$ is a path-connected submanifold. Then, for every $\vect{y} \in B_{\epsilon}(\vect{p}) \cap \Var{J}'$ there exist at least one smooth curve $\gamma_{\vect{p}\to\vect{y}}(t)$ in $\Var J'$ connecting $\vect{p}$ and $\vect{y}$. Applying \reflem{lem_kind_of_taylor_series} to this curve, and plugging the result into \refeqn{eqn_classic_0}, we find
\begin{align}\label{eqn_classic_1}
\kappa(\tuple{p}) = \lim_{\epsilon\to0} \; \max_{\vect{y} \in (B_{\epsilon}(\vect{p}) \cap \Var{J}')} \left\|\deriv{\Phi^{-1}_{\tuple{p}}}{\vect{p}}  \frac{\deriv{\gamma_{\vect{p}\to\vect{y}}}{0}}{\|\deriv{\gamma_{\vect{p}\to\vect{y}}}{0}\|} \right\| + \mathcal{O}( \| \vect{y} - \vect{p} \| ).
\end{align}
Since the geometric tangent space is precisely
\[
\Tang{\vect{p}}{\Var{J}'} := \cset{ \deriv{\gamma}{0} \in \R^N }{ \gamma(t) \subset \Var{J}' \text{ is a smooth curve with } \gamma(0)=\vect{p}},
\]
see, e.g., \cite[Proposition 3.23]{Lee2013}, equation \refeqn{eqn_classic_1} is equivalent to
\[
\kappa(\tuple{p})
= \max\limits_{\vect{v} \in \mathbb{S}(\Tang{\vect{p}}{\Var{J}'})} \| \deriv{\Phi^{-1}_{\tuple{p}}}{\vect{p}} \vect{v} \|
= \| \deriv{\Phi^{-1}_{\tuple{p}}}{\vect{p}} \|
= \| ( \deriv{\Phi}{\vect{p}} )^{-1} \|
= \frac{1}{\varsigma_n(\deriv{\Phi}{\tuple{p}})},
\]
where the penultimate step is by invoking the inverse function theorem for manifolds \cite[Theorem 4.5]{Lee2013}. It remains to prove the case where $\deriv{\Phi}{\tuple{p}}$ is not injective. Then, some unit-length vector $\vect{v}\in\Tang{\tuple{p}}{\Var{P}}$ is mapped to $0$. Let $\gamma(t) \subset \Var{N}$ be a smooth curve with $\gamma(0)=\tuple{p}$ and $\deriv{\gamma}{0} = \vect{v}$. From \reflem{lem_kind_of_taylor_series} it follows that
\begin{align} \label{eqn_expansion_we_need}
\Phi(\gamma(t)) - \Phi(\gamma(0)) = (\deriv{\Phi}{\tuple{p}} \circ P_{\Tang{\tuple{p}}{\Var{P}}}) (\gamma(t) - \gamma(0)) + \mathcal{O}(\|\gamma(t)-\gamma(0)\|^2).
\end{align}
Therefore, for sufficiently small $t$,
\begin{align*}
\kappa(\tuple{p})
\ge \lim_{t \to 0} \frac{ \| \Phi_{\tuple{p}}^{-1}(\Phi(\gamma(t))) - \Phi_{\tuple{p}}^{-1}(\Phi(\gamma(0))) \|}{\| \Phi(\gamma(t)) - \Phi(\gamma(0)) \|}
= \lim_{t \to 0} \left( \frac{\| \Phi(\gamma(t)) - \Phi(\gamma(0)) \|}{ \| \gamma(t) - \gamma(0) \|} \right)^{-1},
\end{align*}
where the last step is because $\Phi_{\tuple{p}}^{-1}$ is a local inverse of $\Phi$. Plugging \refeqn{eqn_expansion_we_need} into the last expression yields $\kappa(\tuple{p}) \ge \| (\deriv{\Phi}{\tuple{p}} \circ P_{\Tang{\tuple{p}}{\Var{P}}}) \vect{v} \|^{-1} = \|\deriv{\Phi}{\tuple{p}} \vect{v} \|^{-1} = \infty = \frac{1}{\varsigma_n(\deriv{\Phi}{\tuple{p}})}$, where the last equality is precisely because the derivative is not injective.
This proves the first equality in \refeqn{eqn_main_thm_condition} for all $\tuple{p} \in \Var{P}$.

Finally, we show that $\varsigma_n(\deriv{\Phi}{\tuple{p}}) = \varsigma_n(U)$ for all $\tuple{p} \in \Var{P}$. The derivative of the addition map $\Phi : \Var{P} \to \Var{J} \subset \R^N$ at $\tuple{p}=(p_1,\ldots,p_r)$ is
\begin{align*}
 \deriv{\Phi}{\tuple{p}} : \Tang{p_1}{\Var{M}_1} \times \cdots \times \Tang{p_r}{\Var{M}_r} \to \Tang{\Phi(\tuple{p})}{\R^N},\;
 (\dot{\vect{p}}_1, \ldots, \dot{\vect{p}}_r) \mapsto \dot{\vect{p}}_1 + \cdots + \dot{\vect{p}}_r.
\end{align*}
Letting $U_i$ be as in the theorem, we can uniquely express $\dot{\vect{p}}_i = U_i \vect{x}_i$. Hence,
\[
\deriv{\Phi}{\tuple{p}}( \dot{\vect{p}}_1, \ldots, \dot{\vect{p}}_r) = U_1 \vect{x}_1 + \cdots + U_r \vect{x}_r = \begin{bmatrix} U_1 & \cdots & U_r \end{bmatrix} [\vect{x}_i]_{i=1}^r = U \vect{x}.
\]
The claim follows from the Courant--Fisher characterization of the least singular value, namely
\(
\varsigma_n( \deriv{\Phi}{\tuple{p}} )
= \min_{\tuple{x}\in\bb{S}(\Tang{\tuple{p}}{\Var{P}})} \Norm{(\deriv{\Phi}{\tuple{p}})(\tuple{x})}
= \min_{\vect{x}\in\mathbb{S}(\R^n)} \|U\vect{x}\|
= \varsigma_n(U).
\)
Note that the second equality is valid only for $U_i$'s with orthonormal columns.
\end{proof}

\subsection{Relative condition numbers}\label{sec_relative_condition}
\emph{Relative condition numbers} can be obtained as follows. Let $\tuple{p} = (\vect{p}_1,\ldots,\vect{p}_r)$ and $\tuple{p}' = (\vect{p}_1', \ldots, \vect{p}_r')$ with $\vect{p}_i, \vect{p}_i' \in \Var{M}_i$. Then, it follows from \refeqn{eqn_rule_of_thumb} that
\[
 \| \vect{p}_j - \vect{p}_j' \| \le \left\| \begin{bmatrix} \vect{p}_1 - \vect{p}_1' & \cdots & \vect{p}_r - \vect{p}_r'\end{bmatrix} \right\|_F \lesssim \kappa(\tuple{p}) \cdot \| \Phi(\tuple{p}) - \Phi(\tuple{p}') \|,
\]
provided that $\tuple{p}$ and $\tuple{p}'$ are close in the product metric on $\Var{P}$.
Hence, a relative condition number $\kappa_{j}^\mathrm{rel}$ for the $j$th component of the JDP at $\tuple{p}$ could be defined as
\[
 \kappa_{j}^\mathrm{rel}(\tuple{p}) := \kappa(\tuple{p}) \frac{\|\Phi(\tuple{p})\|}{\|\vect{p}_j\|},
 \quad\text{so that }\quad
 \frac{\|\vect{p}_j - \vect{p}_j'\|}{\|\vect{p}_j\|} \lesssim \kappa_{j}^\mathrm{rel}(\tuple{p}) \frac{\|\Phi(\tuple{p}) - \Phi(\tuple{p}')\|}{\|\Phi(\tuple{p})\|}.
\]

\begin{remark}
One could complimentarily define a relative condition number for the entire decomposition $\tuple{p}$: $\kappa^\mathrm{rel}(\tuple{p}) := \kappa(\tuple{p}) \frac{\|\Phi(\tuple{p})\|}{\|\tuple{p}\|}.$ However, we believe that the former definition will usually be the more informative one, especially when the norms of the rank-$1$ terms appearing in the decomposition $\tuple{p}$ are of different orders of magnitude. Consider the following example. Assume that $\tuple{p}=(\vect{p}_1,\vect{p}_2)$ with $\vect{p}_1,\vect{p}_2\in\R^N$, $\kappa(\tuple{p}) = 1$, $\|\vect{p}_1\| = 1$ and $\|\vect{p}_2\| = 10^{-10}$.\footnote{Such cases can occur for weak $3$-orthogonal tensor rank decompositions; see \refsec{sec_tensor_rank_decomposition}.} Then, $\kappa^\text{rel} \approx 1 \frac{1}{1} = 1$. One might conclude that the whole decomposition is well-conditioned. If we have an alternative decomposition $\tuple{p}'$ with $\|\tuple{p} - \tuple{p}'\| = 10^{-10}$, then the relative forward error is at most about $10^{-10}$. Interpreting this result, it follows that $\tuple{p}'$ could well be $(\vect{p}_1,0)$, which has a positive-dimensional family of decompositions, namely $\tuple{p}_\alpha = (\alpha \vect{p}_1, (1-\alpha) \vect{p}_1)$. All of these decompositions have $\kappa( \tuple{p}_\alpha ) = \infty$. This does not comply well with the intuition that a small relative forward error should imply a numerically stable decomposition. By contrast, the separate relative condition numbers accurately detect that $\vect{p}_2$ is not at all stable to small perturbations, as $\kappa_1^\text{rel} \approx 1 \frac{1}{1} = 1$ and $\kappa_2^\text{rel} \approx 1 \frac{1}{10^{-10}} = 10^{10}$.
\end{remark}

\section{Characterization as a distance to ill-posedness}\label{sec_def_condition_number}
Demmel \cite{Demmel1987} showed that many condition numbers arising in linear algebra can be interpreted as an inverse distance to a locus of ill-posed inputs to the problem.
In this section, we show that the condition number of the JDP is an inverse distance to a locus of ill-posed problems.

Recall that $\Gr(N,m)$ is the Grassmannian manifold of $m$-dimensional subspaces of $\R^N$.
The \emph{projection distance} \cite[Section 2.6]{matrix_computations} between $W,W' \in \Gr(N,m)$ is
\[
\dist(W,W') := \| \Pi_W-\Pi_{W'} \|,
\]
where $\Pi_W$ is the orthogonal projection onto $W$.
For brevity, we introduce the next shorthand for a product Grassmannian:
\[
 \operatorname{Gr}(N,\vect{n}) := \operatorname{Gr}(N,n_1) \times \operatorname{Gr}(N,n_2) \times \cdots \times \operatorname{Gr}(N,n_r),
\]
where $n_i = \dim \Var{M}_i$.
The projection distance between $W=(W_1,\ldots,W_r) \in \Gr(N,\vect{n})$ and $W' = (W_1',\ldots,W_r') \in \Gr(N,\vect{n})$ is then defined as
\begin{equation}\label{proj-distance}
\dist(W,W'):= \sqrt{\sum_{i=1}^r \dist(W_i,W_i')^2}.
\end{equation}
The distance between a point $W$ and a set $\Var{W}$ is defined to be the infimum of the distances from $W$ to any of the elements of the set $\Var{W}$.

Let the \emph{locus of intersecting subspaces} in $\Gr(N,\vect{n})$ be
\begin{equation}\label{GrSigma}
\GrSigma:= \cset{(W_1,\ldots,W_r)\in \Gr(N,\vect{n})}{\dim (W_1+\cdots+ W_r) < n }.
\end{equation}
{We can then define} the locus of \emph{ill-posed solutions} of the JDP as
\begin{align*}
\Sigma := \{ \tuple{p} \in \Var{P} \;|\; \kappa(\tuple{p})=\infty \}
&= \cset{\tuple{p} \in \Var{P}}{\deriv{\Phi}{\tuple{p}} \text{ is not injective}} \\
&= \cset{\tuple{p} \in \Var{P}}{\rank(\deriv{\Phi}{\tuple{p}}) < n} \\
&= \cset{\tuple{p}=(p_1,\ldots,p_r)\in \Var{P}}{(\Tang{p_1}{\Var{M}_1},\ldots,\Tang{p_r}{\Var{M}_r})\in\Sigma_{\Gr}},
\end{align*}
where the second and third equalities follow from the proof of \refthm{thm_condition_number} and the last by \refthm{thm_inverse_distance}.

\begin{remark}
If $\Sigma = \Var{P}$ with $\dim \Var{P} \le N$, then we call $\Var{J} = \JOIN(\Var{M}_1,\ldots,\Var{M}_r)$ \emph{defective} in analogy with defective join and secant varieties of algebraic varieties \cite[Example 11.22]{Harris1992}.
For join sets originating from Segre and Veronese varieties it is known that defective join sets occur only exceptionally; see \cite{AH1995,AOP2009} for details. By definition, the condition number for elements of defective join sets is $\infty$.
\end{remark}

We need two auxiliary results for proving \refthm{thm_inverse_distance}.
Let $\Var{S}^{N\times r} :=\bb{S}(\bb{R}^N)^r \subset \R^{N \times r}$ denote the subset of matrices whose columns are of unit norm, and $\Var{S}^{N \times r}_{<r}$ will denote the subset of $\Var{S}^{N \times r}$ of matrices whose rank is strictly less than $r$. The first lemma is an alternative characterization of the smallest singular value of a matrix with unit-norm columns.
\begin{lemma}\label{lem_matrixnorms3}
Let $r \le N$, and let $Y \in \Var{S}^{N\times r}$. Then,
\begin{align}\label{eqn_singvalchar}
\varsigma_{r} (Y)
= \min\limits_{\stackrel{X\in \bb{R}^{N\times r},}{\rank(X) < r}} \;\sqrt{\sum_{i=1}^r \Norm{\vect{x}_i-\vect{y}_i}^2}
= \min\limits_{\stackrel{X\in \bb{R}^{N\times r},}{\rank(X) < r}} \;\sqrt{\sum_{i=1}^r \bigl( \sin \sphericalangle(\vect{x}_i, \vect{y}_i) \bigr)^2}.
\end{align}
\end{lemma}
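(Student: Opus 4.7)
The plan is to establish the two equalities in \refeqn{eqn_singvalchar} by independent elementary arguments. The first equality is nothing but the Eckart--Young--Mirsky theorem in Frobenius norm: since $\sqrt{\sum_i \Norm{\vect{x}_i - \vect{y}_i}^2} = \Norm{X-Y}_F$, the minimum of this quantity over $X \in \R^{N\times r}$ with $\rank(X) < r$ is attained by truncating the smallest singular value in the SVD of $Y$, and equals $\varsigma_r(Y)$. (If $\rank(Y) < r$ the identity is trivial since both sides vanish.)

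For the second equality, denote its left- and right-hand sides by $A$ and $B$. The geometric fact I would use is that for a unit vector $\vect{y} \in \R^N$ and any nonzero $\vect{x} \in \R^N$, the squared distance from $\vect{y}$ to the line $\cspan(\vect{x})$ equals $\sin^2 \sphericalangle(\vect{x}, \vect{y}) = 1 - \langle \vect{x}, \vect{y} \rangle^2/\Norm{\vect{x}}^2$, attained at the orthogonal projection $\pi(\vect{y}) := (\langle \vect{x},\vect{y}\rangle /\Norm{\vect{x}}^2)\vect{x}$. I would then prove $A = B$ by a two-sided inequality. For $A \geq B$: for every feasible $X$ and every index $i$, the point $\vect{x}_i$ lies in $\cspan(\vect{x}_i)$, so $\Norm{\vect{x}_i - \vect{y}_i}^2 \geq \sin^2 \sphericalangle(\vect{x}_i, \vect{y}_i)$; summing over $i$ and passing to the infimum over feasible $X$ yields $A \geq B$. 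For $A \leq B$: given any $X$ with $\rank(X) < r$, define $X'$ by replacing each nonzero column $\vect{x}_i$ by $\vect{x}_i' := (\langle \vect{x}_i, \vect{y}_i\rangle /\Norm{\vect{x}_i}^2)\vect{x}_i$ (and $\vect{x}_i' := \vect{0}$ if $\vect{x}_i = \vect{0}$). Since each $\vect{x}_i'$ is a scalar multiple of $\vect{x}_i$, the rank is not raised, so $X'$ is feasible for $A$; moreover, $\Norm{\vect{x}_i' - \vect{y}_i}^2 = \sin^2 \sphericalangle(\vect{x}_i, \vect{y}_i)$ by the projection identity above. Hence $A \leq \sum_i \sin^2\sphericalangle(\vect{x}_i,\vect{y}_i)$ for every such $X$, and taking the infimum yields $A \leq B$.

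The only piece of bookkeeping is the degenerate case of zero columns, which I would handle by the convention $\sin^2\sphericalangle(\vect{0}, \vect{y}_i) := 1$; this matches $\Norm{\vect{0} - \vect{y}_i}^2 = 1$ so the $\geq$ direction remains valid, and the choice $\vect{x}_i' := \vect{0}$ makes the $\leq$ direction automatic. I do not foresee any real obstacle, since every step reduces to orthogonal projection onto a one-dimensional subspace combined with the column-wise nature of both $\Norm{\cdot}_F^2$ and the rank constraint under scalar column rescaling.
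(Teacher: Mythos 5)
Your proposal is correct and follows essentially the same route as the paper: the first equality in \refeqn{eqn_singvalchar} via Eckart--Young in the Frobenius norm, and the second via a two-sided inequality in which each column is replaced by the orthogonal projection of $\vect{y}_i$ onto the line it spans, observing that rescaling a column cannot raise the rank. The only (harmless) difference is bookkeeping: you handle zero columns by the convention $\sin\sphericalangle(\vect{0},\vect{y}_i):=1$ and argue with infima over all feasible $X$, whereas the paper works with minimizers and replaces a zero column by a nonzero vector of $\cspan\set{\vect{x}_1,\ldots,\vect{x}_r}\cap(\vect{y}_i)^\perp$.
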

\begin{proof}
The first equality is by the Eckart-Young characterization of the smallest singular value.
Let $X$ be a minimizer of the middle expression in \refeqn{eqn_singvalchar}. For every $1 \le i \le r$, we distinguish between two cases. If $\vect{x}_i\neq 0$, let $\vect{x}_i' = \alpha \vect{x}_i$ denote the orthogonal projection of $\vect{y}_i$ onto $\vect{x}_i$. By definition, $\Norm{\vect{x}_i-\vect{y}_i}\geq \Norm{\vect{x}_i'-\vect{y}_i} = \sin \sphericalangle (\vect{x}_i, \vect{y}_i)$. Otherwise, if $\vect{x}_i=0$, we choose $\vect{x}_i'$ as any nonzero vector in $\mathrm{span}\set{\vect{x}_1,\ldots,\vect{x}_r}\cap (\vect{y}_i)^\perp$. Then, $\Norm{\vect{x}_i - \vect{y}_i} = 1 = \sin\sphericalangle(\vect{x}_i', \vect{y}_i)$ by construction. In both cases, the column span of $X$ does not change if we replace the $i$th column by $\vect{x}_i'$. Hence, $X' = \left[\begin{smallmatrix} \vect{x}_1' & \cdots & \vect{x}_r' \end{smallmatrix}\right]$ is of rank $< r$. Hence, $\varsigma_r(Y)$ is bounded from below by the right-hand side of \refeqn{eqn_singvalchar}.

Conversely, let $X' = \left[\begin{smallmatrix} \vect{x}_1' & \cdots & \vect{x}_r' \end{smallmatrix}\right]$ be a minimizer of the right-hand side of \refeqn{eqn_singvalchar}. Let $\vect{x}_i$ be the orthogonal projection of $\vect{y}_i$ onto $\vect{x}_i'$. Then $X = \left[\begin{smallmatrix} \vect{x}_1 & \cdots & \vect{x}_r \end{smallmatrix}\right]$ is of rank $<r$ with $\Norm{\vect{x}_i-\vect{y}_i} = \sin \sphericalangle (\vect{x}_i', \vect{y}_i)$, proving the converse inequality.
\end{proof}

The second auxiliary lemma sufficiently characterizes $\Sigma_\Gr$ for our purpose.
\begin{lemma}\label{characterization_of_sigma_gr}
Let $n=n_1+\cdots+n_r$. For $X \in \Var{S}^{N\times r}$, we define
\[
\calZ{E}_X :=\cset{(W_1,\ldots,W_r)\in\Gr(N,\vect{n})}{\forall 1\leq i \leq r: \vect{x}_i \in W_i}.
\]
Then, the following statements hold:
\begin{enumerate}
\item $\Sigma_\Gr$ is the union of $\calZ{E}_X$ over all $X\in\Var{S}^{N \times r}_{< r}$ of rank strictly less than $r$; and
\item $\Sigma_\Gr$ is an algebraic subvariety of $\Gr(N,\tuple{n})$.
\end{enumerate}
\end{lemma}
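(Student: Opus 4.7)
The plan is to prove both parts by reducing the intrinsic statement about subspace dimensions to a linear-algebraic statement about matrix rank. The key tool is the \emph{summation map}
\[
\sigma : W_1 \oplus \cdots \oplus W_r \to \R^N, \quad (w_1, \ldots, w_r) \mapsto w_1 + \cdots + w_r,
\]
whose image is $W_1 + \cdots + W_r$. The defining condition of $\GrSigma$ in \refeqn{GrSigma} is therefore equivalent to $\ker(\sigma) \neq \{0\}$.

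For claim (1), I will prove the two inclusions separately. For $\GrSigma \subseteq \bigcup_X \calZ{E}_X$, I would start from a nonzero kernel element $(w_1, \ldots, w_r) \in \ker(\sigma)$. At least two of the components $w_i$ must be nonzero (a single nonzero component alone cannot sum to zero). Define $\vect{x}_i = w_i / \Norm{w_i}$ for the nonzero components, and pick an arbitrary unit vector $\vect{x}_i \in W_i$ for the vanishing ones (possible because each $n_i \ge 1$). The matrix $X = \begin{bmatrix} \vect{x}_1 & \cdots & \vect{x}_r \end{bmatrix} \in \Var{S}^{N\times r}$ then has linearly dependent columns via the normalized kernel relation among the nonzero slots, so $X \in \Var{S}^{N \times r}_{<r}$ and, by construction, $(W_1, \ldots, W_r) \in \calZ{E}_X$. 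The reverse inclusion is even quicker: if $(W_1, \ldots, W_r) \in \calZ{E}_X$ with $X \in \Var{S}^{N\times r}_{<r}$, then any nontrivial dependence $\sum_i \alpha_i \vect{x}_i = 0$ produces a nonzero element $(\alpha_1 \vect{x}_1, \ldots, \alpha_r \vect{x}_r) \in \ker(\sigma)$; nontriviality is preserved because $\Norm{\vect{x}_i} = 1$ forces $\alpha_i \vect{x}_i = 0 \Leftrightarrow \alpha_i = 0$.

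For claim (2), the approach is to realize $\GrSigma$ as a classical determinantal locus. Cover $\Gr(N, \vect{n})$ by the standard open affine charts, in which each $W_i$ is represented as the column span of an $N \times n_i$ matrix $U_i$ whose entries are polynomial in local coordinates. The condition $\dim(W_1 + \cdots + W_r) < n$ is then equivalent to $\rank \begin{bmatrix} U_1 & \cdots & U_r \end{bmatrix} < n$, which is cut out by the simultaneous vanishing of all $n \times n$ minors of this $N \times n$ matrix. Since these minors are polynomials in the local coordinates, $\GrSigma$ is locally defined by polynomial equations and is therefore a closed algebraic subvariety of $\Gr(N, \vect{n})$.

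Neither step presents a serious obstacle. The one subtlety lies in (1), where the constructed $X$ must still have unit-norm columns at indices where the chosen kernel element vanishes; this is handled by invoking $n_i \ge 1$ so that each $W_i$ contains unit vectors. Claim (2) amounts to a standard rank-drop locus, and could alternatively be phrased globally as the degeneracy locus of the natural morphism $\mathcal{S}_1 \oplus \cdots \oplus \mathcal{S}_r \to \underline{\R}^N$ of the tautological subbundles on the product Grassmannian, which is Zariski closed by general principles.
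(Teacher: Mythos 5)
Your proof is correct. For part (1), your argument via the kernel of the summation map $\sigma$ is essentially what the paper intends, but the paper merely asserts (1) as a ``direct consequence'' of an observation that is really just a restatement of the claim; you supply the actual content --- normalizing the nonzero components of a kernel element of $\sigma$ and padding the vanishing slots with arbitrary unit vectors, which (as you note) silently uses $n_i \ge 1$, an assumption the paper never makes explicit either. For part (2) you genuinely diverge: the paper passes to the Pl\"ucker embedding $\Gr(N,m)\simeq\Pj(\wedge^m\R^N)$ and cuts out $\GrSigma$ set-theoretically by the single equation $\bigwedge_{i=1}^r\bigwedge_{j=1}^{n_i}\vect{w}_{i,j}=0$, exhibiting it as a projective subvariety in one stroke, whereas you work in the standard affine charts and describe $\GrSigma$ by the vanishing of all $n\times n$ minors of $\begin{bmatrix} U_1 & \cdots & U_r\end{bmatrix}$ (equivalently, as the degeneracy locus of the tautological bundle map). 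The paper's route is more global and compact; yours is more elementary and makes the determinantal structure explicit, which matches how the locus is actually exploited later through $\varsigma_n(U)$. Both arguments deliver the one property needed downstream, namely that $\GrSigma$ is Zariski --- hence Euclidean --- closed, so that the distance to $\GrSigma$ is attained.
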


\begin{proof}
Part (1) is a direct consequence of the observation
\begin{align*}
W\in\GrSigma \;\text{ iff } \; \exists X \in \Var{S}^{N\times r}_{<r}: W\in\calZ{E}_X, \quad\text{and}\quad
\forall X\in\Var{S}^{N \times r}_{<r} : \calZ{E}_X \subset \Sigma_\Gr.
\end{align*}
To prove (2) we mildly generalize \cite[Example 8.30]{Harris1992}: Since $\Gr(N,m) \simeq \Pj( \wedge^m \R^N)$ via the Pl\"ucker embedding \cite[Chap. 3.1.C]{gkz}, we see that
 \[
  \Sigma_\Gr = \Bigl\{([\vect{w}_{1,1} \wedge \cdots \wedge \vect{w}_{1,n_1}],\ldots,[\vect{w}_{r,1} \wedge \cdots \wedge \vect{w}_{r,n_r}]) \;|\; 0 = \bigwedge_{i=1}^r \bigwedge_{j=1}^{n_i} \vect{w}_{i,j} \Bigr\} \subset \Gr(N,\tuple{n})
 \]
is a projective subvariety cut out set-theoretically by the above equation.
\end{proof}

An important consequence of \reflem{characterization_of_sigma_gr}(2) is that $\Sigma_\Gr$ is closed in the Euclidean topology. Hence, for all $W\in\Gr(N,\vect{n})$ there exists a closest $W'\in\Sigma_\Gr$ with $\dist(W,W')=\dist(W,\Sigma_\Gr)$; that is, the infimum is attained.

Now we can prove the next result which essentially entails \refthm{thm_inverse_distance}.
\begin{proposition}\label{prop1}
Let $W=(W_1,\ldots,W_r)\in\Gr(N,\vect{n})$. Let $U_i$ be a matrix whose columns form an orthonormal basis for $W_i$, and set $U = \begin{bmatrix} U_1 & \cdots & U_r \end{bmatrix} \in \R^{N\times n}$. Then, we have $
\dist(W,\GrSigma) = \varsigma_n(U).
$
\end{proposition}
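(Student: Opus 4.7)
The plan is a four-step reduction combining both auxiliary lemmas with a closed-form evaluation of the projection distance from $W_i$ to the set of $n_i$-dimensional subspaces containing a prescribed unit vector. First, by \reflem{characterization_of_sigma_gr}(1) and the separable structure of \refeqn{proj-distance} and of $\calZ{E}_X$,
\begin{align*}
\dist(W, \GrSigma) = \inf_{X \in \Var{S}^{N \times r}_{<r}} \sqrt{\sum_{i=1}^r \inf_{W_i' \in \Gr(N, n_i),\, \vect{x}_i \in W_i'} \|\Pi_{W_i} - \Pi_{W_i'}\|^2}.
\end{align*}

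Second, I would establish the pointwise identity $\inf_{W_i' \ni \vect{x}_i} \|\Pi_{W_i} - \Pi_{W_i'}\| = \sin \sphericalangle(\vect{x}_i, W_i)$, where $\sphericalangle(\vect{x}_i, W_i) := \min_{\vect{u} \in W_i \setminus \{0\}} \sphericalangle(\vect{x}_i, \vect{u})$. The lower bound uses that $\|\Pi_{W_i} - \Pi_{W_i'}\|$ is the sine of the largest principal angle between the equidimensional subspaces $W_i$ and $W_i'$, and that $\vect{x}_i \in W_i'$ forces at least one such angle to be $\ge \sphericalangle(\vect{x}_i, W_i)$. The upper bound is witnessed by the construction $W_i' := V + \R \vect{x}_i$, with $V \subset W_i$ any $(n_i-1)$-dimensional subspace orthogonal to $\Pi_{W_i}\vect{x}_i$ (handling the degenerate cases $\vect{x}_i \in W_i$ and $\vect{x}_i \perp W_i$ separately): $V$ contributes $n_i-1$ zero principal angles, and the remaining one is exactly $\sphericalangle(\vect{x}_i, W_i)$.

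Third, since $\sin\sphericalangle(\vect{x}_i, W_i) = \min_{\vect{y}_i \in \mathbb{S}(W_i)} \sin\sphericalangle(\vect{x}_i, \vect{y}_i)$ (attained at $\vect{y}_i = \Pi_{W_i}\vect{x}_i/\|\Pi_{W_i}\vect{x}_i\|$ when nonzero), I would commute the two minima and apply \reflem{lem_matrixnorms3}, whose right-hand side is scale-invariant in each column of $X$ so that restricting to $X \in \Var{S}^{N \times r}_{<r}$ leaves the value unchanged. This yields
\begin{align*}
\dist(W, \GrSigma) = \min_{\vect{y}_i \in \mathbb{S}(W_i),\, 1 \le i \le r} \varsigma_r\bigl(\begin{bmatrix} \vect{y}_1 & \cdots & \vect{y}_r \end{bmatrix}\bigr).
\end{align*}

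Fourth, I would identify the right-hand side with $\varsigma_n(U)$. Writing $\vect{y}_i = U_i \vect{z}_i$ with $\vect{z}_i \in \mathbb{S}(\R^{n_i})$, one has $Y = UZ$, where $Z \in \R^{n \times r}$ places $\vect{z}_i$ in its $i$th block-column and thus has orthonormal columns. For any unit $\vect{z} \in \R^r$, $Z\vect{z}$ is a unit vector of $\R^n$, whence $\|Y\vect{z}\| = \|UZ\vect{z}\| \ge \varsigma_n(U)$ by Courant--Fischer, giving $\varsigma_r(Y) \ge \varsigma_n(U)$. Conversely, take $\vect{v}^\star \in \mathbb{S}(\R^n)$ with $\|U\vect{v}^\star\| = \varsigma_n(U)$, split it into blocks $\vect{v}_i^\star \in \R^{n_i}$, and set $\vect{z}_i := \vect{v}_i^\star/\|\vect{v}_i^\star\|$ together with $z_i := \|\vect{v}_i^\star\|$ (and $\vect{z}_i$ any unit vector when $\vect{v}_i^\star = 0$); this realises $\|Y\vect{z}\| = \|U\vect{v}^\star\| = \varsigma_n(U)$ with unit $\vect{z}$. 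The principal obstacle is step two, which requires the principal-angle description of the projection distance and a careful explicit construction of the extremal $W_i'$; the remaining steps are routine bookkeeping around \reflem{characterization_of_sigma_gr}, \reflem{lem_matrixnorms3}, a min--min interchange, and Courant--Fischer.
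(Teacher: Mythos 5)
Your proposal is correct, and it assembles the same ingredients as the paper's proof --- \reflem{characterization_of_sigma_gr}(1), the per-factor distance formula $\dist(W_i,\calZ{E}_{\vect{x}_i})=\min_{\vect{y}_i\in\bb{S}(W_i)}\sin\sphericalangle(\vect{x}_i,\vect{y}_i)$, \reflem{lem_matrixnorms3}, and the identity $\varsigma_n(U)=\min_{\vect{y}_i\in\bb{S}(W_i)}\varsigma_r([\vect{y}_1\;\cdots\;\vect{y}_r])$ --- but organizes them differently. The paper proves the two inequalities separately: the bound $\varsigma_n(U)\ge\dist(W,\GrSigma)$ passes through the Eckart--Young minimizer $X^\star$ of the extremal $Y^\star$, and the converse starts from a closest point $W'\in\GrSigma$, which requires the attainment of the infimum (justified there by the closedness of $\GrSigma$, i.e.\ \reflem{characterization_of_sigma_gr}(2)). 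You instead exploit the product structure of $\calZ{E}_X$ and of the metric \refeqn{proj-distance} to write $\dist(W,\GrSigma)$ as a fully separable nested infimum and then evaluate it by a chain of equalities; this avoids both the Eckart--Young detour and any appeal to attainment or closedness. You also prove the subspace-distance identity directly via principal angles, where the paper cites Amelunxen's Lemma 3.2; your construction of the extremal $W_i'=V+\R\vect{x}_i$ is sound, including the degenerate cases. The only point to be slightly careful about is the passage from the infimum over all rank-deficient $X\in\R^{N\times r}$ in \reflem{lem_matrixnorms3} to the infimum over $X\in\Var{S}^{N\times r}_{<r}$: column-wise normalization preserves rank-deficiency and the angles, but a zero column of a minimizer must be replaced by a unit vector in the column span orthogonal to $\vect{y}_i$ (as in the lemma's own proof) rather than merely rescaled. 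This is a one-line fix, not a gap.
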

\begin{proof}
We show first that \(\dist(W,\GrSigma) \le \varsigma_n(U)\). It is easy to verify that
\begin{align}
 \varsigma_n(U)
:= \min\limits_{\hat{\vect{z}}\in \bb{S}(\R^n)} \Norm{U \hat{\vect{z}}}
\label{eqn_signchar} &=  \min\limits_{{\vect{w}_i \in\bb{S}(W_i),\, 1 \le i \le r}} \; \min\limits_{\vect{z}\in \bb{S}(\bb{R}^r)}\; \Norm{z_1 \vect{w}_1 +\cdots +z_r \vect{w}_r} \\
\nonumber &=  \min\limits_{{\vect{w}_i \in\bb{S}(W_i),\, 1 \le i \le r}} \; \varsigma_r \bigl(\begin{bmatrix}\vect{w}_1 & \cdots & \vect{w}_r \end{bmatrix}\bigr).
\end{align}
Let $Y^\star = \begin{bmatrix}\vect{y}_1^\star & \cdots & \vect{y}_r^\star\end{bmatrix}$ with $\vect{y}_i^\star\in\bb{S}(W_i)$ be a minimizer of the last statement in the above expression, so that $\varsigma_n(U)= \varsigma_r(Y^\star)$. From the Eckart--Young theorem, we find
\[
 \varsigma_n(U) = \varsigma_r(Y^\star) = \min_{{X \in \R^{N \times r},\, \rank(X)<r}} \| X - Y^\star \|;
\]
let $X^\star$ be an optimizer of this expression. Then,
\begin{align*}
 \varsigma_n(U) = \varsigma_r(Y^\star)
\ge \min\limits_{\substack{\vect{w}_i \in\bb{S}(W_i),\\ 1 \le i \le r}} \sqrt{ \sum_{i=1}^r \| \vect{x}_i^\star - \vect{w}_i \|^2 }
&\ge \min\limits_{\substack{\vect{w}_i \in\bb{S}(W_i),\\ 1 \le i \le r}} \sqrt{ \sum_{i=1}^r \| \Pi_{\vect{x}_i^\star}(\vect{w}_i) - \vect{w}_i \|^2 }.
\end{align*}
From \cite[Lemma 3.2]{amelunxen1}, we have
\begin{align} \label{eqn_lem_amelunxen}
\dist(W_i, \calZ{E}_{\vect{x}_i^\star}) = \min\limits_{\vect{w}_i\in \bb{S}(W_i)} \; \sin\sphericalangle(\vect{x}_i^\star,\vect{w}_i) = \min\limits_{\vect{w}_i\in \bb{S}(W_i)} \| \Pi_{\vect{x}_i^\star}(\vect{w}_i) - \vect{w}_i \|,
\end{align}
where $\calZ{E}_{\vect{x}_i^\star} := \cset{W'\in\Gr(N,n_i)}{\vect{x}_i^\star \in W'}$. Hence, by the definition of $\dist(W, \calZ{E}_X)$ and $\calZ{E}_X$, one finds $\varsigma_r(Y^\star) \ge \dist(W, \calZ{E}_X)$. Since $\calZ{E}_X \subset \GrSigma$ by \reflem{characterization_of_sigma_gr}(1), we have proven the first bound: $\dist(W, \GrSigma) \le \dist(W, \calZ{E}_X) \le \varsigma_n(U)$.

For proving the converse inequality, let $W'=(W_1',\ldots,W_r')\in\GrSigma$ be such that $\dist(W,\GrSigma)= \dist(W,W')$. By \reflem{characterization_of_sigma_gr}(1) there exists a matrix $X \in \Var{S}^{N\times r}_{<r}$, such that $W'\in\calZ{E}_X$. By definition~\refeqn{proj-distance} we have
\[
\dist(W,\GrSigma)= \dist(W,W')
= \Big(\sum_{i=1}^r \Norm{\Pi_{W_i}-\Pi_{W_i'}}^2\Big)^\frac{1}{2}
\geq \Big(\sum_{i=1}^r \Norm{(\Pi_{W_i}-\Pi_{W_i'}) \vect{x}_i}^2\Big)^\frac{1}{2}.
\]
For $1\leq i\leq r$, let $\vect{w}_i =\Pi_{W_i} \vect{x}_i$ and $\vect{y}_i:= \frac{\vect{w}_i}{\Norm{\vect{w}_i}} \in W_i$ if $\vect{w}_i\neq 0$, or, if $\vect{w}_i=0$, let $\vect{y}_i$ denote some arbitrary but fixed point in $\bb{S}(W_i)$. Then for each $i$, we have $\Norm{(\Pi_{W_i}-\Pi_{W_i'})\vect{x}_i} =\Norm{\vect{w}_i-\vect{x}_i}= \sin \sphericalangle(\vect{x}_i, \vect{w}_i) = \sin \sphericalangle(\vect{x}_i, \vect{y}_i)$, and, hence,
\begin{align*}
	\dist(W,\GrSigma)
	\geq \Big(\sum_{i=1}^r \left(\sin \sphericalangle(\vect{x}_i, \vect{y}_i)\right)^2\Big)^\frac{1}{2}
 	\geq  \varsigma_r\bigl(\begin{bmatrix}\vect{y}_1 & \cdots & \vect{y}_r\end{bmatrix} \bigr)
	\geq \varsigma_n(U),
\end{align*}
where the second step is due to \reflem{lem_matrixnorms3} and the last step because of \refeqn{eqn_signchar}.
\end{proof}

We are now ready to wrap up the proof of \refthm{thm_inverse_distance}.

\begin{proof}[Proof of \refthm{thm_inverse_distance}]
Let $\tuple{p}=(p_1,\ldots,p_r)\in \Var{M}_1\times \cdots\times\Var{M}_r$. Applying \refprop{prop1} to $W := (\Tang{p_1}{\Var{M}_1}, \ldots, \Tang{p_r}{\Var{M}_r}) \in \operatorname{Gr}(N,\vect{n})$ concludes the proof.
\end{proof}

\section{The condition number of open boundary points}\label{sec_boundary_points}
As mentioned in the introduction, in applications one seeks to solve the JAP $
\min_{\tuple{p} \in \Var{P}} \,\tfrac{1}{2} \| \Phi(\tuple{p}) -~\widetilde{\vect{p}} \|^2,
$
where~$\widetilde{\vect{p}} \in \R^N$ and $\Var{P} := \Var{M}_1 \times \cdots \times \Var{M}_r$. Since a closed solution of this optimization problem is unknown, iterative optimization methods are usually employed.

A serious complication occurs in these methods when the JAP has no minimizer.
Let $\Var{J} \subset \R^N$ be a join set, and let $\overline{\Var{J}}$ denote its closure in the Euclidean topology. {We call $\mathcal{B} := \overline{\Var{J}}\setminus\Var{J}$ the set of \emph{open boundary points} of $\Var{J}$. Denote by $\Xi \subset \R^N$ the set of all inputs to the JAP such that the infimum cannot be attained; they are ill-posed inputs. It is easy to show that $\Var{B} \subset \Xi$, and  $\Xi = \emptyset$ if and only if $\Var{B} = \emptyset$.
If ${\vect{p}} \in \Xi$ is an ill-posed input, then there is no solution to the JAP, but aforementioned optimization methods will nevertheless produce an approximate minimizer $\tuple{p}_\star \in \Var{P}$. While the backward error $\|\Phi(\tuple{p}_\star) - \vect{p}\|$ may be small, the components $\tuple{p}_\star$ of the computed approximate solution usually admit no interpretation, since $\vect{p} \not\in \Var{J}$ does not satisfy the model!}

\begin{remark} \label{rem_open_boundary_exists}
The existence of open boundary points is not an exceptional phenomenon. Consider the case of join sets of projective varieties. Chapter 10 of \cite{Landsberg2012} contains examples of join sets $\Var{J} = \JOIN(\Var{M},\ldots,\Var{M})$ where the closure of $\Var{J}$ in the Euclidean topology, write $\overline{\Var{J}}$, is strictly larger than $\Var{J}$; namely, this situation arises for tensor rank decompositions, Waring decompositions, and partially symmetric decompositions. In the context of secant varieties, points in $\overline{\Var{J}}\setminus\Var{J}$ are said to admit an \emph{$\Var{M}$-border rank} \cite[Section 5.2]{Landsberg2012} that is strictly less than their rank.
\end{remark}

This section is devoted to proving \refthm{thm_boundary_points_cn} which states that all decompositions $\tuple{p}_\star \in \Var{P}$ whose corresponding point in the join set $\vect{p}_\star = \Phi(\tuple{p}_\star)$ is close to the locus of ill-posed inputs $\Xi$ must necessarily have a large condition number. This can then be interpreted in light of \refeqn{eqn_rule_of_thumb}, which would reveal that $\tuple{p}_\star$ is not a stable decomposition. The condition number of the computed approximation $\tuple{p}_\star$ is thus always a reliable measure of stability, even for ill-posed inputs $\Xi$ of the JAP.

Unfortunately, we could not prove \refthm{thm_boundary_points_cn} without additional assumptions on the manifolds $\Var{M}_1, \ldots, \Var{M}_r$ and their join set $\Var{J} = \JOIN(\Var{M}_1, \ldots,\Var{M}_r)$. The two extra assumptions that we make are that all of the manifolds $\Var{M}_i$ are \emph{cones}, and that
$$
\Var{J} = \JOIN(\Var{M}_1,\ldots,\Var{M}_r) = \JOIN(\overline{\Var{M}}_1,\ldots,\overline{\Var{M}}_r),
$$
where the overline denotes the Euclidean closure. Recall that we call a set $C \subset \R^N$ a cone if for every $p \in C$ also $\alpha p \in C$ for all $\alpha \in \R_0$.

It is natural to wonder whether aforementioned assumptions are necessary. We give two examples\footnote{We would like to thank an anonymous referee for pointing out the first example to us.} of join sets where one of these assumptions is not satisfied, and show that the conclusion of \refthm{thm_boundary_points_cn} does not hold. The first example features an open boundary point for which the condition number converges to a finite value.

\begin{example}\label{ex1}
Consider the following smooth manifolds embedded in $\R^3$:
\[
\Var{M}_1=(\R\backslash\{0\}) \times \R \times \{0\} \quad\text{and}\quad \Var{M}_2=\{0\}\times \{0\} \times \R.
\]
These manifolds violate the second assumption because $(0,1,1)$ is in \(\JOIN(\overline{\Var{M}}_1, \overline{\Var{M}}_2)\) but not in \(\JOIN(\Var{M}_1, \Var{M}_2).\)

Let $p_1(t) = (\sin(t),\cos(t),0)$ and $p_2(t)=(0,0,1)$ for $0< t <\pi$. Then, we have $$\lim_{t\to 0} (p_1(t)+p_2(t))=(0,1,1)\not\in \JOIN(\Var M_1,\Var M_2).$$
The tangent spaces to $\Var M_1$ and $\Var M_2$ are orthogonal to each other. Using this information, we can determine the condition number easily via \refthm{thm_condition_number}: the matrix  $U$ in the statement of the theorem is a matrix with orthonormal columns, so that $\varsigma_2(U) = 1$. Hence,
\(
 \kappa( (p_1(t), p_2(t)) ) = 1^{-1}.
\)
Consequently, $\lim_{t\to 0} \kappa(p_1(t),p_2(t)) = 1$.
\end{example}

The second example shows a join set that has an open boundary point where the condition number neither converges nor diverges towards $\infty$.

\begin{example}\label{ex2}
Consider the embedded smooth submanifolds of $\R^3$,
\[
\Var{M}_1=\cset{(-t,0,0)}{1 \le t < \infty}\quad\text{and}\quad \Var{M}_2=\cset{\big(t,\tfrac{\cos(t)}{t},\tfrac{\sin(t^2)}{t}\big)}{1 \le t < \infty }.
\]
It is easy to see that they are not cones.

Let $p_1(t) = (-t,0,0)$ and $p_2(t)=\big(t,\tfrac{\cos(t)}{t},\tfrac{\sin(t^2)}{t}\big)$ for $1\leq t\leq \infty$. Then,
$$
\lim\limits_{t\to \infty} p_1(t)+p_2(t) = (0,0,0),
$$
which is not a point in $\JOIN(\Var{M}_1,\Var{M}_2)$. The tangent spaces to $\Var M_1$ and $\Var M_2$ are given respectively by the spans of
\begin{align*}
\vect{v}(t) = (1,0,0) \text{ and }
\vect{w}(t) = \left(1,\tfrac{-\sin(t)t-\cos(t)}{t^2},2\cos(t^2) - \tfrac{\sin(t^2)}{t^2}\right).
\end{align*}
By \refthm{thm_condition_number}, $
 \kappa( (p_1(t), p_2(t)) )^{-1} = \varsigma_2 \bigl( \begin{bmatrix} \vect{v}(t) & \vect{w}(t) / \|\vect{w}(t)\| \end{bmatrix} \bigr) = \varsigma_2 ( U(t) ).$ Let $z(t)$ be such that $\|\vect{w}(t)\|^2 = 1 + z(t)$. It is straightforward to show that
\[
 \varsigma_2 ( U(t) ) = \sqrt{1 - \frac{1}{\sqrt{1+z(t)}}} = \sqrt{2} \sin\left( \tfrac{\theta(t)}{2} \right),
\]
where $\theta(t) = \arccos \bigl( ( 1 + z(t) )^{-\frac{1}{2}} \bigr)$ happens to be the angle between $\Tang{p_1(t)}{\Var{M}_1}$ and $\Tang{p_2(t)}{\Var{M}_2}$. For large $t$ we have $\theta(t)\sim \arccos\bigl( (1+(2 \cos(t^2))^2 )^{-\frac{1}{2}} \bigr)$, from which we see that $\kappa( (p_1(t),p_2(t)) )$ does not converge.

Note that in this example there exist both convergent subsequences as well as divergent subsequences.
Indeed, letting $t_k = \sqrt{2 k \pi}$, one finds
\[
\lim_{k\to\infty} \kappa( (p_1(t_k), p_2(t_k)) )
= \frac{1}{\sqrt{2} \sin( \arccos\bigl( \tfrac{1}{\sqrt{5}} \bigr) )}
= \sqrt{1-\sqrt{\tfrac{1}{5}}},
\]
whereas letting $t_k = \sqrt{k \pi / 2}$ yields
\[
\lim_{k\to\infty} \kappa( (p_1(t_k), p_2(t_k)) )
\to \frac{1}{\sqrt{2} \sin( \arccos\bigl(1 \bigr) )}
= \infty.
\]
This situation is troublesome because it is a theoretical possibility that an iterative algorithm for solving the associated JAP with $\vect{p} = \vect{0} \in \overline{\Var{J}}\setminus\Var{J}$ as input yields only iterates for which the condition number is bounded  by some small constant, so that the condition number cannot detect that the final iterate has no meaningful interpretation.
\end{example}

The examples illustrate that additional conditions on the manifolds $\Var{M}_i$ need to be imposed to continuously extend the condition number to open boundary points. However, we do not know which are the minimal restrictions for reaching this goal.

We proceed with the proof of \refthm{thm_boundary_points_cn}.
The key property that we exploit is the fact that the condition number of the JDP at $(p_1,\ldots,p_r)$ is invariant under scaling of the individual factors if the $\Var{M}_i$'s are cones. The reason for this is that the tangent space to a point of a cone is invariant under scaling of that point.
\begin{proposition}\label{scale_invariance}
Let $\Var{M}_i$ be an $n_i$-dimensional smooth submanifold embedded in $\R^N$. Assume that $\Var{M}_i$ is a cone. Let $p_i \in \Var{M}_i$, then
\[
 \kappa((p_1, p_2, \ldots, p_r)) = \kappa( (\beta_1 p_1, \beta_2 p_2, \ldots, \beta_r p_r) ) \text{ for all } \beta_i \in \R_0, i = 1,2,\ldots,r.
\]
\end{proposition}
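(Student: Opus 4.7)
The plan is to reduce the claim to the spectral characterization provided by \refthm{thm_condition_number}, or equivalently to the distance-to-ill-posedness formula of \refthm{thm_inverse_distance}. Both express $\kappa(\tuple{p})$ as a quantity determined entirely by the tuple of tangent spaces $(\Tang{p_1}{\Var{M}_1}, \ldots, \Tang{p_r}{\Var{M}_r})$. Consequently, it suffices to prove that for every $i$ and every $\beta_i \in \R_0$,
\[
\Tang{\beta_i p_i}{\Var{M}_i} \;=\; \Tang{p_i}{\Var{M}_i}.
\]

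For this equality I would consider the scaling map $s_{\beta_i}: \R^N \to \R^N$, $v \mapsto \beta_i v$. Since $\Var{M}_i$ is a cone and $\beta_i \neq 0$, the restriction $s_{\beta_i}|_{\Var{M}_i}$ is a diffeomorphism of $\Var{M}_i$ onto itself with inverse $s_{1/\beta_i}|_{\Var{M}_i}$. Its derivative at $p_i$ is therefore a linear isomorphism $\Tang{p_i}{\Var{M}_i} \to \Tang{\beta_i p_i}{\Var{M}_i}$. Because $s_{\beta_i}$ is already a linear map on the ambient space $\R^N$, its derivative at $p_i$ equals $s_{\beta_i}$ itself, so
\[
\Tang{\beta_i p_i}{\Var{M}_i} \;=\; s_{\beta_i}\bigl(\Tang{p_i}{\Var{M}_i}\bigr) \;=\; \beta_i \cdot \Tang{p_i}{\Var{M}_i} \;=\; \Tang{p_i}{\Var{M}_i},
\]
where the last equality uses only that a linear subspace of $\R^N$ is invariant under nonzero scaling.

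With the equality of tangent spaces established factor-wise, the conclusion $\kappa((p_1,\ldots,p_r)) = \kappa((\beta_1 p_1, \ldots, \beta_r p_r))$ follows at once from \refthm{thm_inverse_distance}, since $\dist(W,\GrSigma)$ depends only on the tuple $W=(W_1,\ldots,W_r)$ of subspaces. I do not expect any serious obstacle here; the only mildly subtle point is that the spectral formula $\kappa(\tuple{p}) = 1/\varsigma_n(U)$ of \refthm{thm_condition_number} is \emph{a priori} basis-dependent, but \refprop{prop1} already identifies $\varsigma_n(U)$ with the intrinsic projection distance $\dist(W,\GrSigma)$, so no particular choice of orthonormal basis needs to be tracked through the argument.
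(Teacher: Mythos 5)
Your proposal is correct and follows exactly the route the paper takes: the paper justifies \refprop{scale_invariance} by the one-line observation that the tangent space to a cone at a point is invariant under nonzero scaling of that point, combined with the fact that \refthm{thm_condition_number} (equivalently \refthm{thm_inverse_distance}) expresses $\kappa(\tuple{p})$ purely in terms of the tuple of tangent spaces. You have simply filled in the details of that remark, including the correct observation that the choice of orthonormal bases $U_i$ is immaterial.
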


We can now present the proof of \refthm{thm_boundary_points_cn}.

\begin{proof}[Proof of \refthm{thm_boundary_points_cn}]
For brevity, let $\tuple{p}(t):=(\vect{p}_1(t),\ldots,\vect{p}_r(t))$.
Assume that the curve $\tuple{p}(t)$ {admits a bounded subsequence, then this subsequence} has a limit $\tuple{p}_\star \in\overline{\Var{M}_1}\times\ldots\times\overline{\Var{M}_r}$. As $\Phi$ is a continuous map and $\Var J= \JOIN(\overline{\Var M_1},\ldots, \overline{\Var M_r})$, it follows that $ \vect{p}_\star = \Phi(\tuple{p}_\star) \in \Var{J}$, which {contradicts the assumption in the formulation of the theorem.}

The foregoing shows that $\tuple{p}(t)$ becomes unbounded, i.e., $\lim_{t\to0} \Norm{\tuple{p} (t)} \to \infty$, in the product metric on $\Var{M}_1\times\ldots\times\Var{M}_r$, i.e.,
\(
 \| \tuple{p}(t) \| = \sqrt{ \sum_{i=1}^r \| \vect{p}_i(t) \|^2 }.
\)
Hence, there is at least one $i_\star \in [1,d]$ such that $\lim_{t\to0} \|\vect{p}_{i_\star}(t)\| \to \infty$. The sequence $\vect{p}(t) := \Phi(\tuple{p}(t))$ is assumed to converge in $\R^N$, so it is bounded for small $t$. Since $\vect{p}(t)=\vect{p}_1(t)+\cdots+\vect{p}_r(t)$, there exists another $j_{\star} \in [1,d]$ with $j_\star \ne i_\star$ so that $\lim_{t\to0} \|\vect{p}_{j_\star}(t)\| \to \infty$.

Let $U(t) = [U_1(t), \ldots,U_r(t)]$, where $U_{i}(t)$ is a matrix with orthonormal columns that form a basis for $\Tang{\vect{p}_i(t)}{\Var{M}_i}$. Then \refthm{thm_condition_number} states that
$\kappa(\tuple{p}(t)) = \frac{1}{\varsigma_n( U(t) )}$. For proving that $\lim_{t\to0} \kappa( \tuple{p}(t) ) \to \infty$ we will construct an explicit $\vect{x}(t)\in \Tang{\vect{p}_1(t)}{\Var{M}_1} \times \cdots \times \Tang{\vect{p}_r(t)}{\Var{M}_r}$  and show $\|U(t) \vect{x}(t)\|\to 0$, while $\|\vect{x}(t)\|\geq 1$, implying $\varsigma_n(U(t))\to 0$.

Let $\vect{q}(t),\widetilde{\vect{q}}(t)\subset \bb{S}(\bb{R}^N)$ be the two curves with
\begin{equation}\label{d0}
\alpha(t) \vect{q}(t)= \vect{p}_{i_\star}(t),\; \alpha(t) > 0, \quad \text{ and } \quad  \beta(t) \widetilde{\vect{q}}(t) = \sum_{j \ne i_\star} \vect{p}_j(t),\; \beta(t) > 0.
\end{equation}
Since $\alpha(t) \vect{q}(t)+\beta(t)\widetilde{\vect{q}}(t)$ converges for $t\to 0$, there is a constant $c>0$ with
\begin{equation*} 
c\geq \Norm{\alpha(t) \vect{q}(t)+\beta(t)\widetilde{\vect{q}}(t)}^2 = \alpha(t)^2 + 2\alpha(t) \beta(t) \langle \vect{q}(t), \widetilde{\vect{q}}(t) \rangle + \beta(t)^2.
\end{equation*}
Since $\alpha(t)$ is unbounded and bounded away from $0$ for $t \approx 0$, $\beta(t)$ is unbounded as well. Write $\beta(t) = \gamma(t) \alpha(t)$ with $\gamma(t) > 0$. Then, we have
\begin{equation}\label{d2}
c  \alpha(t)^{-2} \geq 1 + 2 \gamma(t) \langle \vect{q}(t), \widetilde{\vect{q}}(t) \rangle + \gamma(t)^2 .
\end{equation}
Real solutions $\gamma(t)$ exist only if $\langle \vect{q}(t), \widetilde{\vect{q}}(t) \rangle^2 \ge 1 - c \alpha(t)^{-2}$. In fact, by the Cauchy-Schwartz inequality $\langle \vect{q}(t), \widetilde{\vect{q}}(t) \rangle^2 \leq 1$, the only solution consistent with \refeqn{d2} satisfies
\begin{equation*} 
 \lim\limits_{t\to0} \langle \vect{q}(t), \widetilde{\vect{q}}(t) \rangle = -1.
\end{equation*}
Similar to \refeqn{d0}, for all $j\neq i_\star$, write $\vect{p}_j(t)= \nu_j(t) \vect{q}_j(t)$ so that $\vect{q}_j(t)\in\bb{S}(\bb{R}^N)$ and $\nu_j(t) \ge 0$. Since the $\Var{M}_i$'s are all cones we have $\vect{q}_i(t)\in \Var{M}_i$ and, because of homogeneity, $\Tang{\vect{p}_i(t)}{\Var{M}_i} = \Tang{\vect{q}_i(t)}{\Var{M}_i}$ and $\vect{q}_i(t) \in \Tang{\vect{q}_i(t)}{\Var{M}_i}$. For all $1\leq i\leq r$ we write $\vect{q}_{i}(t) = U_{i}(t) \vect{x}_{i}(t).$ Consequently, $\|\vect{x}_i(t)\|=1$. Then
\[
 \widetilde{\vect{q}}(t) = \beta(t)^{-1} \sum_{j\ne i_\star} \nu_j(t) \vect{q}_j(t) = \beta(t)^{-1} \sum_{j\ne i_\star} \nu_j(t) U_{j}(t) \vect{x}_j(t).
\]
Let $(\vect{z}_1,\ldots,\vect{z}_k) := [\vect{z}_1^T \;\ldots\; \vect{z}_k^T ]^T$ denote the vertical stacking of the vectors. Then,
\begin{multline*}
\beta(t) \vect{x}(t) = \\
\Bigl(
\nu_1(t) \vect{x}_1(t),
\ldots,
\nu_{i_\star-1}(t) \vect{x}_{i_\star-1}(t),
\beta(t) \vect{x}_{i_\star}(t),
\nu_{i_\star+1}(t) \vect{x}_{i_\star+1}(t),
\ldots,
\nu_r(t) \vect{x}_r(t) \Bigr).
\end{multline*}
For all $t \approx 0$ we have $\|\vect{x}(t)\|\ge \|\vect{x}_{i_\star}(t)\|=1$. Since $\langle \vect{q}(t), \widetilde{\vect{q}}(t) \rangle$ tends to $-1$ as $t\to0$, it follows by construction that $\lim_{t\to0} U(t)\vect{x}(t) = 0$.
\end{proof}

Let $\Var{X}$ be a smooth projective variety. Then, combining the foregoing theorem with \cite[Theorem 2.1]{SS2017} shows that all points on the algebraic boundary of the real rank-$2$ boundary $\rho(\Var{X})$ inside of $\JOIN(\Var{X},\Var{X})$ are always ill-posed for the JDP.

We conclude this section by showing that the several join sets arising in tensor decomposition problems satisfy the assumptions of \refthm{thm_boundary_points_cn}.

\begin{proposition}
Let $\Var{X} \subset \mathbb{P}\mathbb{R}^N$ be a smooth projective variety, and let $\widehat{\Var{X}} \subset \R^N$ be the affine cone over $\Var{X}$, excluding zero. Then, $\Sec{r}{\widehat{\Var{X}}} = \JOIN(\widehat{\Var{X}},\ldots,\widehat{\Var{X}})$ satisfies the assumptions in \refthm{thm_boundary_points_cn}.
\end{proposition}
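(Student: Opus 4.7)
The plan is to verify the two hypotheses of \refthm{thm_boundary_points_cn} with $\Var{M}_1 = \cdots = \Var{M}_r = \widehat{\Var{X}}$. Smoothness of $\widehat{\Var{X}}$ as an embedded submanifold of $\R^N$ is inherited from that of $\Var{X}$ via the smooth submersion $\pi : \R^N \setminus \{0\} \to \Pj \R^N$, since $\widehat{\Var{X}} = \pi^{-1}(\Var{X})$ and the preimage of a smooth submanifold under a submersion is smooth. The cone property is then immediate: for $p \in \widehat{\Var{X}}$ and $\alpha \in \R \setminus \{0\}$ one has $\pi(\alpha p) = \pi(p) \in \Var{X}$, so $\alpha p \in \widehat{\Var{X}}$.

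Next I identify $\overline{\widehat{\Var{X}}}$. Since $\widehat{\Var{X}}$ equals the real locus of a homogeneous polynomial ideal with the origin removed, continuity of polynomials gives $\overline{\widehat{\Var{X}}} \subseteq \widehat{\Var{X}} \cup \{0\}$; conversely, for any $p \in \widehat{\Var{X}}$ the curve $t p$ lies in $\widehat{\Var{X}}$ and tends to $0$ as $t \to 0^+$, so $0 \in \overline{\widehat{\Var{X}}}$ (if $\widehat{\Var{X}} = \emptyset$ then both joins are empty and there is nothing to prove). Hence $\overline{\widehat{\Var{X}}} = \widehat{\Var{X}} \cup \{0\}$.

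The main content is to establish the join equality
\[
\JOIN(\widehat{\Var{X}},\ldots,\widehat{\Var{X}}) = \JOIN(\widehat{\Var{X}} \cup \{0\},\ldots,\widehat{\Var{X}} \cup \{0\}).
\]
The inclusion $\subseteq$ is tautological. For $\supseteq$, given $v = p_1 + \cdots + p_r$ with each $p_i \in \widehat{\Var{X}} \cup \{0\}$, I rewrite $v$ as an $r$-fold sum of elements of $\widehat{\Var{X}}$ using only the cone structure. Fix any $q \in \widehat{\Var{X}}$, so that $-q$ and $2q$ also lie in $\widehat{\Var{X}}$. Then zero can be expressed as $q + (-q)$ or as $q + q + (-2q)$, and any non-zero summand can be split as $p_j = 2 p_j + (-p_j)$. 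Combining these identities allows the number of summands in any expression $v = q_1 + \cdots + q_s$ with $q_i \in \widehat{\Var{X}}$ to be adjusted by any even amount, or by one via the splitting trick, while preserving the value of the sum. A short case analysis on the parity of the number of zero summands, and on whether all summands are zero, then yields the desired $r$-fold expression in $\widehat{\Var{X}}$.

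The only mildly delicate case I anticipate is when all $r$ summands are zero and $r$ is odd: this requires $r \ge 3$ so that $0 = q + q + (-2q)$ together with $(r-3)/2$ pairs $q + (-q)$ suffices. This is compatible with \refthm{thm_boundary_points_cn}, whose conclusion (two distinct divergent summands) implicitly requires $r \ge 2$, and $r = 2$ with all-zero summands is even. Beyond this parity bookkeeping, no further obstacle is anticipated.
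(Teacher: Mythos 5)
Your proposal is correct and follows essentially the same route as the paper: identify the closure as $\widehat{\Var{X}}\cup\{0\}$ and then absorb any zero summands using the cone structure, via the splitting $p = 2p + (-p)$ when a nonzero summand is available and via nonzero multiples of a fixed $q\in\widehat{\Var{X}}$ summing to zero otherwise (the paper uses $\sum_{i=1}^k \alpha_i q$ with $\sum\alpha_i=0$, which is your pair/triple bookkeeping in one stroke). Both arguments tacitly assume $r\ge 2$ in the all-zero case, as you correctly note.
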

\begin{proof}
The fact that $\widehat{\Var{X}}$ is a smooth, analytic manifold is by definition, because the vertex at zero, which could be singular, is removed from our definition of a cone. The closure of $\widehat{\Var{X}}$ in the Euclidean topology is $\overline{\Var{X}} := \widehat{\Var{X}} \cup \{0\}$. To show that $\Sec{r}{\overline{\Var{X}}} \subset \Sec{r}{\widehat{\Var{X}}}$ (the converse inclusion is trivial), let $\vect{p} = \vect{p}_1 + \cdots + \vect{p}_r$ with $\vect{p}_i \in \overline{\Var{X}}$.
Assume without loss of generality that the first $k$, $1 \le k \le r$, $\vect{p}_i$'s are zero. If $k=1$, take $\vect{q}=\vect{p}_2$ and then $\vect{p} = 2 \vect{p}_2 - \vect{p}_2 + \vect{p}_3 + \cdots + \vect{p}_r \in \Sec{r}{\widehat{\Var{X}}}$ as well. If $k > 1$, take $\vect{q} \in \widehat{\Var{X}}$ arbitrary and note that $\sum_{i=1}^k \vect{p}_i = 0 = \sum_{i=1}^k \alpha_i \vect{q}$ with $\sum_{i=1}^k \alpha_i = 0$ and all $\alpha_i \ne 0$.
Hence, $\vect{p} = \vect{p}_1 + \cdots + \vect{p}_r = \alpha_1 \vect{q} + \cdots + \alpha_k \vect{q} + \vect{p}_{k+1} + \cdots + \vect{p}_r \in \Sec{r}{\widehat{\Var{X}}}$ as well.
\end{proof}

In particular, this result covers the next $r$-secant sets: tensors of canonical rank bounded by $r$; symmetric tensors of Waring rank bounded by $r$; and partially symmetric tensors of partially symmetric rank bounded by $r$.
However, the proposition does not apply to block term decompositions, because the set of tensors of multilinear rank at most $(r_1,\ldots,r_d)$ is not a smooth projective variety \cite[Section 3]{LW2007}. We do not know if a result analogous to \refthm{thm_boundary_points_cn} holds for block term decompositions.

\section{Examples} \label{sec_examples}
This section presents explicit expressions for the matrices $U_i$ in \refthm{thm_boundary_points_cn} for two well-studied tensor-related JDPs, so that the condition number can be computed efficiently by computing the least singular value of $\left[ \begin{smallmatrix} U_1 & U_2 & \cdots & U_r \end{smallmatrix} \right].$
The first example is the \emph{tensor rank decomposition problem} that consists of computing the rank-$1$ tensors comprising the CP decomposition\footnote{This decomposition is also called CANDECOMP/PARAFAC, canonical polyadic decomposition, or tensor rank decomposition in the literature.} (CPD) of an $m_1 \times \cdots \times m_d$ tensor. The second example is the problem of computing the symmetric rank-$1$ tensors appearing in a Waring decomposition of a symmetric $m \times \cdots \times m$ tensor.

\subsection{The CP decomposition}
\label{sec_tensor_rank_decomposition}
Recall that the Segre manifold $\Var{S} \subset \R^{N}$, where $N = m_1\cdots m_d$, is the analytic manifold of rank-1 tensors.\footnote{For consistency with the rest of the article, we exploit the natural isomorphism between $\R^{m_1 \times \cdots \times m_d}$ and $\R^N$, given by \emph{vectorizing} the tensor.} Let $\otimes$ denote the tensor product. Then, the $r$-secant set of $\Var{S}$ is
\[
 \Sec{r}{\Var{S}} = \JOIN(\Var{S}, \ldots, \Var{S}) = \Bigl\{ \sum_{i=1}^{r} \sten{a}{i}{1} \otimes \cdots \otimes \sten{a}{i}{d} \;|\; \sten{a}{i}{k} \in \R^{m_k}_0 \Bigr\},
\]
which is also a semi-algebraic set \cite{dSL2008}. This class appears in a wide variety of applications, such as psychometrics \cite{Kroonenberg2008}, chemical sciences \cite{SBG2004}, theoretical computer science \cite{BCS1997}, signal processing \cite{Review2017}, and machine learning \cite{Review2017}, among others. In several of these applications, the uniquely determined rank-$1$ terms appearing in this CPD admit an interpretation in the application domain; see \cite[Section IX]{Review2017}. Hence, the condition number is of natural interest \cite{V2017}.

In the remainder of this section, let $p_i = \mu_i \sten{a}{i}{1} \otimes \cdots \otimes \sten{a}{i}{d} \in \Var{S}$ with $\sten{a}{i}{k} \in \mathbb{S}(\R^{m_k})$ and~$\mu_i > 0$ be rank-$1$ tensors. Then, an orthogonal basis for $\Tang{p_i}{\Var{S}} \subset \R^{N}$ is obtained by considering the derivative of the surjective map
\[
 \Psi : \R \times \mathbb{S}(\R^{m_1}) \times \cdots \times \mathbb{S}(\R^{m_d}) \to \Var{S}, \quad (\mu_i, \sten{a}{i}{1}, \ldots, \sten{a}{i}{d}) \mapsto \mu_i \sten{a}{i}{1} \otimes \cdots \otimes \sten{a}{i}{d},
\]
where $\otimes$ should be interpreted as the Kronecker product.
Hence, the basis vectors are given by the columns of
\begin{align} \label{eqn_explicit_tg_space}
U_i :=
\begin{bmatrix}
\sten{a}{i}{1} \otimes \cdots \otimes \sten{a}{i}{d} &
Q_i^1 \otimes \sten{a}{i}{2} \otimes \cdots \otimes \sten{a}{i}{d} &
\cdots &
\sten{a}{i}{1} \otimes \cdots \otimes \sten{a}{i}{d-1} \otimes Q_i^d
\end{bmatrix},
\end{align}
where $Q_i^k \in \R^{m_k \times (m_k-1)}$ is a matrix whose columns form an orthonormal basis of $\Tang{\sten{a}{i}{k}}{\mathbb{S}(\R^{m_k})}$, i.e., the orthogonal complement of $\sten{a}{i}{k}$.
From \refthm{thm_condition_number} it follows that the condition number of the CPD problem at $\tuple{p} = (p_1,\ldots,p_r)$ is
\(
 \kappa(\tuple{p}) = ( \varsigma_{n} ( \left[\begin{smallmatrix} U_1 & \cdots & U_r \end{smallmatrix}\right] ) )^{-1},
\)
where $n = \dim \Var{S}^{\times r} = r \cdot \dim \Var{S} = r(1 - d + \sum_{k=1}^d m_k)$.

Using substantially the same arguments as in \cite[Section 5]{V2017}, the following key properties of the condition number are straightforward to prove.

\begin{proposition}
The condition number is scale and orthogonally invariant: for every $\tensor{A} = \Phi(\tuple{p}) \in \Sec{r}{\Var{S}}$, all $\alpha \in \R_0$, and all $Q = Q_1 \otimes \cdots \otimes Q_d$ with $Q_k$ an $m_k \times m_k$ orthogonal matrix, it holds that
$
\kappa( (p_1,\ldots,p_r) ) = \kappa( (\alpha Q p_1, \ldots, \alpha Q p_r) ).
$
\end{proposition}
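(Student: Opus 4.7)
My plan is to apply the spectral characterization \refthm{thm_condition_number}, which gives $\kappa(\tuple p) = 1/\varsigma_n(U)$ with $U = [\,U_1\;\cdots\;U_r\,]$ built from orthonormal bases of the tangent spaces $\Tang{p_i}{\Var S}$. The task then reduces to showing that the $n$th singular value of this matrix is left unchanged by the two group actions in question.

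For scale invariance by the common factor $\alpha \in \R_0$, I would invoke \refprop{scale_invariance} with $\beta_i = \alpha$ for every $i$: since the Segre manifold $\Var{S}$ (with the origin excluded) is a cone, the tangent space is scale invariant as a linear subspace of $\R^N$, so the matrix $U$ at $(\alpha p_1,\ldots,\alpha p_r)$ may be chosen identical to the one at $(p_1,\ldots,p_r)$.

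For orthogonal invariance, the key would be the explicit formula \refeqn{eqn_explicit_tg_space} for $U_i$ combined with the mixed-product property of the Kronecker product, $(A_1 \otimes \cdots \otimes A_d)(B_1 \otimes \cdots \otimes B_d) = (A_1 B_1) \otimes \cdots \otimes (A_d B_d)$. Writing $p_i = \mu_i\, \sten{a}{i}{1}\otimes \cdots \otimes \sten{a}{i}{d}$, this identity gives $Qp_i = \mu_i(Q_1 \sten{a}{i}{1})\otimes\cdots\otimes(Q_d \sten{a}{i}{d})$, so $Qp_i$ is again of rank one with factors $Q_k \sten{a}{i}{k} \in \mathbb{S}(\R^{m_k})$. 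Since each $Q_k$ is orthogonal, the columns of $Q_k Q_i^k$ form an orthonormal basis of $(Q_k \sten{a}{i}{k})^\perp$, and a valid tangent-space basis at $Q p_i$ is therefore $U_i' := Q U_i$, obtained by applying the mixed-product identity column-by-column to \refeqn{eqn_explicit_tg_space}. Stacking yields $U' = QU$, and since $Q$ is the Kronecker product of orthogonal matrices it is itself orthogonal, so $\varsigma_n(QU) = \varsigma_n(U)$.

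I do not anticipate a serious obstacle. The only subtlety is that the orthonormal complement basis $Q_i^k$ is not unique, but \refthm{thm_condition_number} guarantees that $\varsigma_n(U)$ is independent of the particular ONB chosen for each tangent space, so it suffices to exhibit \emph{some} valid ONB at the transformed point that differs from the original by left-multiplication by $Q$, which is precisely what the construction above achieves. The combined action $p_i \mapsto \alpha Q p_i$ then follows by composing the two invariances.
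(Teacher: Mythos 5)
Your proposal is correct and follows essentially the same route as the paper: scale invariance via \refprop{scale_invariance}, and orthogonal invariance by observing that $Q U_i$ (with $Q_k Q_i^k$ an orthonormal basis of the orthogonal complement of $Q_k \sten{a}{i}{k}$) is a valid orthonormal tangent-space basis at $Q p_i$, so that the singular values of $QU$ and $U$ coincide. Your extra remark on the independence of the choice of orthonormal basis is a sound, if implicit, part of the paper's argument as well.
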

\begin{proof}
Invariance with respect to $\alpha$ was shown in \refprop{scale_invariance}. Orthogonal invariance follows by noting that $Q_k Q_i^k$ is an orthogonal basis of the orthogonal complement of $Q_k \sten{a}{i}{k}$ so that an orthogonal basis of $\Tang{Q p_i}{\Var{S}}$ is $Q U_i$ with $U_i$ as in \refeqn{eqn_explicit_tg_space}. The result follows by the orthogonal invariance of singular values.
\end{proof}

Recall from \cite[Section 5.2]{V2017} that a CPD
\(
 \tensor{A} = \sum_{i=1}^r \sten{a}{i}{1} \otimes \cdots \otimes \sten{a}{i}{d}
\)
is called weak $3$-orthogonal
if for every $1 \le i < j \le d$ there exist $1 \le k_1 < k_2 < k_3 \le d$, depending on $i$ and~$j$, such that there is orthogonality in the corresponding factors:
\(\sten{a}{i}{k_1} \perp \sten{a}{j}{k_1}\), \(\sten{a}{i}{k_2} \perp \sten{a}{j}{k_2}\), and \(\sten{a}{i}{k_2} \perp \sten{a}{j}{k_3}\). This class includes orthogonally decomposable tensors \cite{Kolda2001,Zhang2001}.
It turns out that weak $3$-orthogonal tensors are very well conditioned.

\begin{proposition}\label{weak_3}
The condition number of a weak $3$-ortho\-go\-nal CPD is $1$.
\end{proposition}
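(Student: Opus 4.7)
The plan is to verify, under the weak $3$-orthogonality hypothesis, that the matrix $U = \begin{bmatrix} U_1 & \cdots & U_r \end{bmatrix}$ appearing in \refthm{thm_condition_number} has orthonormal columns. Once this is done, $\varsigma_n(U) = 1$ and the claim follows immediately. By \refprop{scale_invariance} we may assume without loss of generality that every $\mu_i = 1$, so $p_i = \sten{a}{i}{1} \otimes \cdots \otimes \sten{a}{i}{d}$ with unit $\sten{a}{i}{k}$.

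First I would verify that each individual block $U_i$ already has orthonormal columns. This is a direct consequence of \refeqn{eqn_explicit_tg_space}: any two distinct columns of $U_i$ differ in exactly one tensor factor, and in that factor the entries are either $\sten{a}{i}{k}$ paired with a column of $Q_i^k \perp \sten{a}{i}{k}$, or two distinct columns of $Q_i^k$, which are orthonormal by the Stiefel property of $Q_i^k$.

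The heart of the argument is showing that columns of $U_i$ and $U_j$ are orthogonal for $i \ne j$. Every column of $U_i$ is a pure tensor $\bigotimes_{k=1}^d \vect{w}_i^k$ in which at most one factor (call its index $k_i$, if it exists) is a column of $Q_i^{k_i}$ while all remaining factors equal $\sten{a}{i}{k}$; similarly for $U_j$ with an at-most-one perturbed index $k_j$. Hence, for such columns $\vect{u}, \vect{v}$,
\[
\langle \vect{u}, \vect{v} \rangle = \prod_{k=1}^d \langle \vect{w}_i^k, \vect{w}_j^k \rangle,
\]
and for every index $k \notin \{k_i, k_j\}$ the corresponding factor reduces to $\langle \sten{a}{i}{k}, \sten{a}{j}{k}\rangle$. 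Weak $3$-orthogonality applied to the pair $(i,j)$ provides three indices $k_1 < k_2 < k_3$ with $\sten{a}{i}{k_\ell} \perp \sten{a}{j}{k_\ell}$; since the excluded set $\{k_i, k_j\}$ has at most two elements, at least one of $k_1, k_2, k_3$ lies outside it and contributes a vanishing factor to the product. Therefore $\langle \vect{u}, \vect{v}\rangle = 0$.

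Combining these two observations yields $U^T U = I_n$, so $\varsigma_n(U) = 1$ and \refthm{thm_condition_number} gives $\kappa(\tuple{p}) = 1$. The only point requiring care is the bookkeeping in the third paragraph: one must check across all four cases (each of $\vect{u}, \vect{v}$ being either the ``scaling'' column or a ``perturbed'' column) that the number of indices where a factor differs from $\langle \sten{a}{i}{k}, \sten{a}{j}{k}\rangle$ is at most two, which is why three inner-product orthogonalities are precisely what is needed. This is the crux of why the definition of weak $3$-orthogonality singles out three factors rather than two.
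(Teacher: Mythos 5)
Your proposal is correct and follows exactly the route the paper takes: the paper's proof simply asserts that ``one verifies'' $\left[\begin{smallmatrix} U_1 & \cdots & U_r \end{smallmatrix}\right]$ has orthonormal columns due to weak $3$-orthogonality, and your argument supplies precisely that verification, including the key counting step (at most two perturbed factor indices, hence three orthogonality conditions suffice). One tiny slip in your within-block case: two columns of $U_i$ perturbed at \emph{different} positions $k \ne k'$ differ in two tensor factors, not one, but orthogonality still follows since at position $k$ the factor is $\langle \vect{q}, \sten{a}{i}{k}\rangle = 0$ with $\vect{q}$ a column of $Q_i^k$.
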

\begin{proof}
 One verifies that $\left[\begin{smallmatrix} U_1 & \cdots & U_r \end{smallmatrix}\right]$ is a matrix with orthonormal columns due to the weak $3$-orthogonality. The result follows from the fact that a matrix with orthonormal columns has all singular values equal to $1$.
\end{proof}

\begin{corollary}
The condition number of a rank-$1$ tensor is $1$.
\end{corollary}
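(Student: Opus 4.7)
The plan is to observe that this is the trivial case $r = 1$ of the CPD setting, and to reduce the statement to the preceding proposition on weak $3$-orthogonal decompositions (or, alternatively, to a direct inspection of the matrix $U_1$).

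First, I would note that the weak $3$-orthogonality condition from Proposition~\ref{weak_3} is stated as a condition on \emph{pairs} $1 \le i < j \le d$ of summands. For a rank-$1$ tensor, we are in the situation $r = 1$, so the index set $\{(i,j) : 1 \le i < j \le r\}$ is empty, and the weak $3$-orthogonality condition is vacuously satisfied. Proposition~\ref{weak_3} then immediately yields $\kappa = 1$. This is essentially a one-line argument invoking the previous result.

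If one prefers a self-contained verification, I would instead compute directly from the formula \refeqn{eqn_explicit_tg_space}. For $r = 1$ and a rank-$1$ tensor $p_1 = \mu_1 \sten{a}{1}{1} \otimes \cdots \otimes \sten{a}{1}{d}$, the full matrix reduces to $U_1$, whose columns are Kronecker products of unit vectors drawn either from $\{\sten{a}{1}{k}\}$ or from the orthonormal columns of $Q_1^k$. Since $Q_1^k$'s columns are orthogonal to $\sten{a}{1}{k}$, any two distinct columns of $U_1$ differ in at least one Kronecker factor by an orthogonal pair, and hence their inner product (which factorizes across the Kronecker factors) vanishes. Therefore $U_1$ has orthonormal columns, so $\varsigma_n(U_1) = 1$, and \refthm{thm_condition_number} gives $\kappa(p_1) = 1/\varsigma_n(U_1) = 1$.

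There is no real obstacle here; the only point requiring any care is recognizing that the $r = 1$ case automatically satisfies the hypothesis of Proposition~\ref{weak_3} by vacuity, so the corollary is truly a corollary and not a separate computation.
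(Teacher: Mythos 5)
Your proposal is correct and matches the paper's intent: the corollary is stated immediately after Proposition~\ref{weak_3} with no separate proof, precisely because the $r=1$ case satisfies weak $3$-orthogonality vacuously (there are no pairs of summands), and your fallback direct check that $U_1$ has orthonormal columns is just the within-block part of the proposition's own verification. No gaps.
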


\subsubsection{Relation to the norm-balanced condition number}
One can ask how the proposed condition number relates to the one that was recently proposed in \cite{V2017}. The \emph{norm-balanced} condition number of a decomposition $\tuple{p} = (p_1,\ldots,p_r)$, where the $p_i$'s are as above, is defined as $\widetilde{\kappa}(\tuple{p}) := ( \varsigma_n( \left[ \begin{smallmatrix} \widetilde{U}_1 & \widetilde{U}_2 & \cdots & \widetilde{U}_r \end{smallmatrix} \right] ) )^{-1}$, where
\begin{align*}
 \widetilde{U}_i :=
\mu_i^{1-1/d}
\begin{bmatrix}
 I_{m_1} \otimes \sten{a}{i}{2} \otimes \cdots \otimes \sten{a}{i}{d} &
\cdots &
\sten{a}{i}{1} \otimes \cdots \otimes \sten{a}{i}{d-1} \otimes I_{m_d}
\end{bmatrix}.
\end{align*}
The condition number $\widetilde{\kappa}$ measures the sensitivity {of $\{ \mu_i^{1/d} \sten{a}{i}{k} \in \R^{m_k} \}_{i,k}$ with respect to perturbations of the tensor $\tensor{A} = \Phi(\tuple{p}) \in \Sec{r}{\Var{S}}$, whereas the proposed condition number $\kappa$ measures the sensitivity of the rank-$1$ tensors $(p_1, \ldots, p_r)$ in $\Var{S}^{\times r}$.}
The appropriate choice of condition number depends on the specific application.

The two condition numbers $\kappa$ and $\widetilde{\kappa}$ are related as follows. Let
\[
f: \left(\R^{m_1}_0\times\cdots\times\R^{m_d}_0\right)^{\times r} \to \R^N,\;\,
\tuple{a}=(\sten{a}{i}{1},\ldots, \sten{a}{i}{d})_{i=1,\ldots,r} \mapsto \sum_{i=1}^r \sten{a}{i}{1} \otimes \cdots \otimes \sten{a}{i}{d}.
\]
From \cite[Theorem 1]{V2017} we know that for a \emph{norm-balanced} $\tuple{a}$, i.e., $\|\sten{a}{i}{1}\|=\cdots=\|\sten{a}{i}{d}\|$, the condition number is $\widetilde{\kappa}(\tuple{p})= (\varsigma_{n} ( \deriv{f}{\tuple{a}} ) )^{-1}$, where $\tuple{p} = (p_1,\ldots,p_r)$ with $p_i = \sten{a}{i}{1}\otimes\cdots\otimes\sten{a}{i}{d}$, and $n = r \cdot \dim \Var{S}$ as before. We have the following diagram:
\begin{equation}\label{diagram}
\xymatrixcolsep{4pc}
\xymatrix{
\left(\R^{m_1}_0\times\cdots\times\R^{m_d}_0\right)^{\times r} \ar[d]^{\sigma^{\times r}} \ar[dr]^f   &\\
\left(\R^{m_1}\otimes\cdots\otimes\R^{m_d}\right)^{\times r} \supset \Var{S}^{\times r}  \ar[r]_{\hspace{2cm}\Phi} & \Sec{r}{\Var{S}}}
\end{equation}
Here $\sigma^{\times r}$ denotes the $r$-fold product of the Segre map $\sigma: (\vect{a}^1,\ldots,\vect{a}^d) \mapsto \vect{a}^1\otimes \cdots \otimes \vect{a}^d$.
Since $f=\Phi\circ \sigma^{\times r}$, we have $\deriv{f}{\tuple{a}} = \deriv{\Phi}{\tuple{p}} \circ \deriv{\sigma^{\times r}}{\tuple{a}}$. Note that $\deriv{\sigma^{\times r}}{\tuple{a}} \in \R^{n \times r(m_1 + \cdots + m_d)}$ is of full row rank because $\Var{S}$ is a smooth manifold. If $\deriv{\Phi}{\tuple{p}} \in \R^{N \times n}$ is of maximal column rank then $(\deriv{f}{\tuple{a}})^\dagger = (\deriv{\sigma^{\times r}}{\tuple{a}})^\dagger (\deriv{\Phi}{\tuple{p}})^\dagger$, by \cite[Section 5.7, Fact 16]{Hogben}. Hence,
\begin{equation}\label{eqn_two_cns_relation}
\widetilde{\kappa}(\tuple{p})
:= \| (\deriv{f}{\tuple{a}})^\dagger \|_2
\leq \| (\deriv{\sigma^{\times r}}{\tuple{a}})^\dagger \|_2 \| (\deriv{\Phi}{\tuple{p}})^\dagger \|_2
= \bigl( \varsigma_{n}(\deriv{\sigma^{\times r}}{\tuple{a}}) \bigr)^{-1} \cdot \kappa(\tuple{p}).
\end{equation}
Observe that if $\deriv{\Phi}{\tuple{p}}$ is not injective then $\widetilde{\kappa}(\tuple{p}) = \kappa(\tuple{p}) = \infty$.

\begin{proposition}
Let $\tuple{a} = (\sten{a}{i}{1},\ldots,\sten{a}{i}{d})_{i=1}^r$ be norm-balanced. Then, inequality \refeqn{eqn_two_cns_relation} is sharp for weak $3$-orthogonal CPDs $\tuple{p}=\sigma^{\times r}(\tuple{a})$.
\end{proposition}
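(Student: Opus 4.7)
The plan is to exploit the fact that weak $3$-orthogonality makes the Jacobian matrix $U = [U_1 \cdots U_r]$ an isometric embedding, which forces the submultiplicative bound \refeqn{eqn_two_cns_relation} to be attained with equality. By \refprop{weak_3}, one has $\kappa(\tuple{p}) = \varsigma_n(U)^{-1} = 1$ together with $U^T U = I_n$. Consequently, the target equality collapses to proving $\widetilde{\kappa}(\tuple{p}) = \varsigma_n(\deriv{\sigma^{\times r}}{\tuple{a}})^{-1}$, and since $\tuple{a}$ is assumed norm-balanced I may substitute $\widetilde{\kappa}(\tuple{p}) = \varsigma_n(\deriv{f}{\tuple{a}})^{-1}$ on the left-hand side. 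Thus it suffices to establish
\[
\varsigma_n(\deriv{f}{\tuple{a}}) = \varsigma_n(\deriv{\sigma^{\times r}}{\tuple{a}}).
\]

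To carry this out, I would express $\deriv{\sigma^{\times r}}{\tuple{a}}$ as a matrix $B \in \R^{n \times r(m_1 + \cdots + m_d)}$ with respect to the orthonormal basis of $\Tang{\tuple{p}}{\Var{S}^{\times r}}$ whose blocks are the columns of $U_1,\ldots,U_r$. Since $\sigma^{\times r}$ is a submersion onto the $n$-dimensional manifold $\Var{S}^{\times r}$, the matrix $B$ has full row rank $n$, so that $\varsigma_n(\deriv{\sigma^{\times r}}{\tuple{a}}) = \varsigma_n(B)$. The chain rule combined with the identification $\deriv{\Phi}{\tuple{p}} = U$ then produces the factorization $\deriv{f}{\tuple{a}} = U B$ as an $N \times r(m_1 + \cdots + m_d)$ matrix.

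The weak $3$-orthogonality assumption now enters decisively through the Gramian identity
\[
(UB)^T (UB) = B^T (U^T U) B = B^T B,
\]
which shows that $UB$ and $B$ share exactly the same singular values. In particular $\varsigma_n(\deriv{f}{\tuple{a}}) = \varsigma_n(UB) = \varsigma_n(B) = \varsigma_n(\deriv{\sigma^{\times r}}{\tuple{a}})$, closing the argument. I do not anticipate any genuine obstacle: the only subtlety is the full row-rank claim for $B$, which is automatic from $\sigma^{\times r}$ being a smooth submersion onto $\Var{S}^{\times r}$, guaranteeing that $\deriv{\sigma^{\times r}}{\tuple{a}}$ attains its maximal rank $n$ at every $\tuple{a}$.
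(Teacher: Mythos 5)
Your proof is correct, but it takes a genuinely different route from the paper's. The paper argues by explicit evaluation of both sides: it invokes \refprop{weak_3} to get $\kappa(\tuple{p})=1$, cites Proposition 14 of the reference [V2016] to get $\widetilde{\kappa}(\tuple{p}) = (\mu_{\min})^{1/d-1}$ with $\mu_{\min} = \min_i \|p_i\|$, and then computes $\varsigma_n(\deriv{\sigma^{\times r}}{\tuple{a}}) = (\mu_{\min})^{1-1/d}$ from the block-diagonal representation $\operatorname{diag}(\widetilde{U}_1,\ldots,\widetilde{U}_r)$ of $\deriv{\sigma^{\times r}}{\tuple{a}}$, so that the two sides of \refeqn{eqn_two_cns_relation} visibly agree. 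You instead exploit the chain-rule factorization $\deriv{f}{\tuple{a}} = U B$ and the Gramian identity $(UB)^T(UB) = B^TB$, which is valid because weak $3$-orthogonality gives $U^TU = I_n$; this yields $\varsigma_n(\deriv{f}{\tuple{a}}) = \varsigma_n(\deriv{\sigma^{\times r}}{\tuple{a}})$ without ever computing either quantity. Both arguments rely on the norm-balanced hypothesis only through the identification $\widetilde{\kappa}(\tuple{p}) = \varsigma_n(\deriv{f}{\tuple{a}})^{-1}$ from [V2016, Theorem 1], and your full-row-rank claim for $B$ is exactly the assertion the paper itself makes about $\deriv{\sigma^{\times r}}{\tuple{a}}$. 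What your approach buys is independence from [V2016, Proposition 14] and a slightly more general statement --- sharpness holds whenever the matrix $U$ of \refthm{thm_condition_number} has orthonormal columns, weak $3$-orthogonality being merely a sufficient condition for that. What the paper's approach buys is the explicit common value $(\mu_{\min})^{1/d-1}$ of both sides, which is informative in its own right.
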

\begin{proof}
By \refprop{weak_3}, $\kappa(\tuple{p}) = 1$, whereas \cite[Proposition 19]{V2017} states that $\widetilde{\kappa}(\tuple{p})$ is $(\mu_{\min})^{1/d - 1}$, where $\mu_{\min}$ is the smallest value in $\{ \mu_i := \|p_i\| \}_{i=1}^r$. One verifies that the matrix of $\deriv{\sigma^{\times r}}{\tuple{a}}$ with respect to the correct bases is $\operatorname{diag}(\widetilde{U}_1,\ldots,\widetilde{U}_r)$. Consequently, $\varsigma_{n}(\deriv{\sigma^r}{\vect{a}}) = (\mu_{\min})^{1 - 1/d}$.
\end{proof}

There is an interesting difference between the two condition numbers when measuring distances to $\Sec{r-1}{\Var{S}}$.
Assume that the the norms of the rank-$1$ tensors $p_i$ are sorted: $\mu_1 \ge \cdots \ge \mu_r > 0$.
If $\mu_r$ is small then a small perturbation applied to $\mu_r p_r$ sends it to $0$, hereby sending the perturbed tensor to $\sigma_{r-1}$. The condition number at the perturbed point is $\widetilde{\kappa} = \varsigma_n( \left[\begin{smallmatrix} \widetilde{U}_1 & \cdots & \widetilde{U}_{r-1} & 0 \end{smallmatrix}\right] )^{-1} = \infty$. It follows from the continuity of singular values that as $\mu_r \to 0$, the condition number $\widetilde{\kappa} \to \infty$ so that for small $\mu_r$ the absolute norm-balanced condition number of \cite{V2017} is automatically large.
By contrast, $\kappa$ is insensitive to the norm of the points $p_i$, as was shown in \refprop{scale_invariance}, hence $\mu_r \approx 0$ does not automatically imply a large condition number.
It also entails that if a perturbation of magnitude less than $\mu_r$ is applied to $p=p_1+\cdots+p_r$ in the direction of $p_r$, then the condition number remains constant. This type of perturbation does not decrease the projection distance from $(\Tang{p_1}{\Var{S}},\ldots,\Tang{p_r}{\Var{S}})$ to $\GrSigma$, which is exactly the inverse of $\kappa(\tuple{p})$ according to \refthm{thm_boundary_points_cn}.
The only way to perturb $p$ towards $\Sec{r-1}{\Var{S}}$ so that the condition number increases is to change the angles between the $p_i$'s, which causes the angles between $\Tang{p_i}{\Var{S}}$ to shift, hereby changing the projection distance to $\GrSigma$.

\subsection{The Waring decomposition}
A symmetric tensor $\tensor{A} \in \R^{m \times \cdots \times m}$ is a tensor whose entries are invariant under a permutation of indices: $a_{i_1,\ldots,i_d} = a_{\pi_1, \ldots, \pi_d}$ where $\pi$ is any permutation of $\{i_1,\ldots, i_d\}$.
Such tensors can be decomposed as
\[
 \tensor{A} = \sum_{i=1}^r \sten{a}{i}{\otimes d} = \sum_{i=1}^r \sten{a}{i}{} \otimes \cdots \otimes \sten{a}{i}{} \quad\text{with } \sten{a}{i}{} \in \R^m;
\]
if $r$ is minimal, this expression is called a Waring decomposition. The summands in generic Waring decompositions are uniquely determined \cite{COV2017}. This type of decompositions occurs often in latent variable models \cite{AGHKT2014}, where the individual rank-$1$ symmetric tensors $\vect{a}_i$ correspond to the quantities of interest.

The set of rank-$1$ symmetric tensors is the affine cone over a smooth projective variety called the Veronese variety \cite{Landsberg2012}; hence, it is a manifold. It can be constructed as
the image of the surjective map
\[
 \Psi : \R_0 \times \mathbb{S}(\R^{m}) \to \Var{V}, \quad (\mu_i, \vect{a}_i) \mapsto \mu \vect{a}_i^{\otimes d} = \mu_i \vect{a}_i \otimes \cdots \otimes \vect{a}_i.
\]
The join set $\Var{J} = \JOIN(\Var{V},\ldots,\Var{V})$ is then the set of all symmetric tensors that have a Waring decomposition of rank bounded by $r$.

The condition number of the Waring decomposition problem (WDP) is determined analogously to the previous example. The derivative of $\Psi$ is
\begin{align*}
 \deriv{\Psi}{(\mu_i,\vect{a}_i)} : \Tang{\mu_i}{\R_0} \times \Tang{\vect{a}_i}{\mathbb{S}(\R^m)} &\to \Tang{\Psi(\mu_i,\vect{a}_i)}{\Var{V}} \\
 (\dot{\mu}_i, \dot{\vect{a}}_i ) &\mapsto \dot{\mu}_i \vect{a}_i^{\otimes d} + \mu_i \dot{\vect{a}}_i \otimes \vect{a}_i^{\otimes d-1} + \cdots + \vect{a}_i^{\otimes d-1} \otimes \dot{\vect{a}}_i.
\end{align*}
Let $Q_i$ be an $m \times (m-1)$ matrix whose columns form an orthonormal basis of $\Tang{\vect{a}_i}{\mathbb{S}(\R^m)}$. Let $p_i = \mu_i \sten{a}{i}{\otimes d}$. Then, we see that an orthonormal basis of $\Tang{p_i}{\Var{V}}$ is given by the columns of the $m^d \times m$ matrix\footnote{In fact, $\Var{V} \subset S^d \R^m \simeq \R^{\binom{d+m-1}{d}}$, so it is possible to reduce the size of this matrix to $\binom{d+m-1}{d} \times m$. For simplicity we ignore this optimization.}
\[
 V_i :=
 \begin{bmatrix}
  \sten{a}{i}{\otimes d} & \frac{1}{d}(Q_i \otimes \sten{a}{i}{\otimes d-1} + \cdots + \sten{a}{i}{\otimes d-1} \otimes Q_i)
 \end{bmatrix};
\]
the fact that $V_i^T V_i = I_m$ can be verified readily. Then it follows from \refthm{thm_condition_number} that the condition number of the WDP at $\tuple{p} = (p_1,\ldots,p_r)$ is
\(
 \kappa(\tuple{p}) = ( \varsigma_{n} ( \left[\begin{smallmatrix} V_1 & \cdots & V_r \end{smallmatrix}\right] ) )^{-1},
\)
where $n = \dim \Var{V}^{\times r} = r m$.

The following results are obtained by arguments very similar to those employed in the corresponding results in \refsec{sec_tensor_rank_decomposition}.

\begin{proposition}
The condition number is scale and orthogonally invariant: for every $\tensor{A} = \Phi(\tuple{p}) \in \Sec{r}{\Var{V}}$, all $\alpha \in \R_0$, and all $Q = Q' \otimes \cdots \otimes Q'$ with $Q'$ an $m \times m$ orthogonal matrix, it holds that $
\kappa( (p_1,\ldots,p_r) ) = \kappa( (\alpha Q p_1, \ldots, \alpha Q p_r) ).
$
\end{proposition}

Recall that a symmetric \emph{odeco} tensor $\tensor{A}$ can be expressed as $\sum_{i=1}^r \sten{a}{i}{\otimes d}$ with $\sten{a}{i}{} \perp \sten{a}{j}{}$ for all $i\ne j$ and $r \le n$. Since Kruskal's lemma \cite{JS2004} applies, the set of symmetric rank-$1$ summands in the decomposition $\tuple{p} \in \Var{V} \times \cdots \times \Var{V}$ is uniquely determined. We can thus call $\kappa(\tuple{p})$ \textit{the} condition number of a symmetric odeco tensor $\Phi(\tuple{p})$.

\begin{proposition}
The condition number of a symmetric odeco tensor is $1$.
\end{proposition}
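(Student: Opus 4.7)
The plan is to apply Theorem 2.1 in the form $\kappa(\tuple{p}) = \varsigma_n( [V_1 \;\cdots\; V_r] )^{-1}$ and show that the concatenated matrix $[V_1 \;\cdots\; V_r]$ has orthonormal columns when $\tensor{A} = \Phi(\tuple{p})$ is odeco. Since each individual $V_i$ is already orthonormal by construction (as noted in the derivation preceding the proposition), the only thing to verify is that columns drawn from distinct blocks $V_i$ and $V_j$ with $i \ne j$ are mutually orthogonal. Once this is established, the matrix is an $N \times n$ matrix with $n = rm \le N$ (recall $r \le m$ for odeco tensors) having orthonormal columns, so $\varsigma_n = 1$.

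The computation reduces to checking four types of inner products between the two block structures. Let $\vect{b} \in \mathrm{span}(Q_i) = \sten{a}{i}{\perp}$ and $\vect{c} \in \mathrm{span}(Q_j) = \sten{a}{j}{\perp}$, and use the standard inner product identity on tensor products, $\langle \vect{u}_1 \otimes \cdots \otimes \vect{u}_d,\, \vect{v}_1 \otimes \cdots \otimes \vect{v}_d \rangle = \prod_{k=1}^d \langle \vect{u}_k, \vect{v}_k \rangle$. First, the "normal" columns pair: $\langle \sten{a}{i}{\otimes d}, \sten{a}{j}{\otimes d} \rangle = \langle \sten{a}{i}{}, \sten{a}{j}{}\rangle^d = 0$ because $\sten{a}{i}{} \perp \sten{a}{j}{}$. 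Second, a "normal" column of $V_i$ against a "tangential" column of $V_j$ produces each of $d$ summands containing $\langle \sten{a}{i}{}, \sten{a}{j}{}\rangle^{d-1} = 0$. Finally, a "tangential" column of $V_i$ against a "tangential" column of $V_j$ yields a sum of $d^2$ products; in each product, either $\vect{b}$ and $\vect{c}$ occupy the same tensor slot, contributing $\langle \vect{b}, \vect{c}\rangle \langle \sten{a}{i}{}, \sten{a}{j}{}\rangle^{d-1} = 0$, or they occupy different slots, contributing $\langle \vect{b}, \sten{a}{j}{}\rangle \langle \sten{a}{i}{}, \vect{c}\rangle \langle \sten{a}{i}{}, \sten{a}{j}{}\rangle^{d-2} = 0$. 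Either way the result is zero for $d \ge 2$.

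Hence every cross-block inner product vanishes, so $[V_1 \;\cdots\; V_r]$ has orthonormal columns, giving $\varsigma_n( [V_1 \;\cdots\; V_r] ) = 1$ and therefore $\kappa(\tuple{p}) = 1$. I anticipate no real obstacle beyond carefully bookkeeping the $d^2$ terms in the last inner product, which is the only place where multiple summands interact; this is routine once the orthogonality of the $\sten{a}{i}{}$'s is exploited, and the argument in fact reuses the same logic behind \refprop{weak_3} for the CPD case, since odeco tensors are the symmetric analogue of weak $3$-orthogonal decompositions.
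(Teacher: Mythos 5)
Your approach is the same as the paper's: the paper disposes of this proposition (and of the analogous weak-$3$-orthogonal CPD statement, \refprop{weak_3}) by verifying that $\left[\begin{smallmatrix} V_1 & \cdots & V_r \end{smallmatrix}\right]$ has orthonormal columns, and your computation simply makes that verification explicit. For $d \ge 3$ the bookkeeping is correct.

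There is, however, one genuine error: your claim that the tangential--tangential cross terms vanish ``for $d \ge 2$'' fails at $d = 2$. The summand in question is $\langle \vect{b}, \vect{a}_j\rangle \langle \vect{a}_i, \vect{c}\rangle \langle \vect{a}_i, \vect{a}_j\rangle^{d-2}$, and for $d=2$ the last factor is $1$ while the first two need not vanish, since $\vect{b}$ is only constrained to be orthogonal to $\vect{a}_i$, not to $\vect{a}_j$. Concretely, taking $\vect{b} = \vect{a}_j \in \operatorname{span}(Q_i)$ and $\vect{c} = \vect{a}_i \in \operatorname{span}(Q_j)$ produces the \emph{same} column $\tfrac{1}{2}(\vect{a}_i \otimes \vect{a}_j + \vect{a}_j \otimes \vect{a}_i)$ in both $V_i$ and $V_j$, so the tangent spaces $\Tang{p_i}{\Var{V}}$ and $\Tang{p_j}{\Var{V}}$ intersect nontrivially and $\varsigma_n$ is in fact $0$, not $1$. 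This is not a fixable defect of the argument but a failure of the statement itself at $d=2$: an odeco decomposition of a symmetric matrix with a repeated coefficient admits a positive-dimensional family of decompositions, so $\kappa = \infty$ by \refcor{prop_infdecomp_infcond}. You should therefore restrict to $d \ge 3$, which is the implicit setting here anyway (the uniqueness assertion via Kruskal's lemma immediately preceding the proposition already requires $d \ge 3$); with that restriction your proof is complete and coincides with the paper's.
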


\begin{corollary}
The condition number of a symmetric rank-$1$ tensor is $1$.
\end{corollary}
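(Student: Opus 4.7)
The plan is to observe that a symmetric rank-$1$ tensor is just the case $r=1$ of the preceding proposition, where the orthogonality condition between distinct summands is vacuously satisfied. So I would state this as a direct specialization: a symmetric rank-$1$ tensor $\tensor{A} = \mu\, \vect{a}^{\otimes d}$ corresponds to a decomposition $\tuple{p} = (p_1)$ with $p_1 = \mu\, \vect{a}^{\otimes d} \in \Var{V}$, and the odeco condition $\sten{a}{i}{} \perp \sten{a}{j}{}$ for $i\neq j$ is trivially fulfilled since there are no such pairs.

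Alternatively, to keep the corollary self-contained, I would give a one-line direct verification from \refthm{thm_condition_number}. For $r=1$, the matrix whose least singular value appears in the spectral characterization is simply $V_1 \in \R^{N\times m}$. The construction of $V_1$ in the paragraph preceding the corollary already records that $V_1^T V_1 = I_m$, i.e., $V_1$ has orthonormal columns. Hence every singular value of $V_1$ equals $1$; in particular $\varsigma_n(V_1) = 1$ with $n = \dim \Var{V} = m$. Applying \refthm{thm_condition_number} then yields
\[
\kappa(\tuple{p}) = \frac{1}{\varsigma_n(V_1)} = 1.
\]

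There is no real obstacle here; the work has already been done in verifying $V_1^T V_1 = I_m$ and in the preceding proposition. The only mild subtlety worth flagging in the write-up is that one must check that the rank-$1$ case $r=1$ genuinely falls under the hypotheses of the preceding proposition (vacuous orthogonality, and $r=1 \le m = n$), which justifies citing it; otherwise, the direct route via \refthm{thm_condition_number} is immediate and involves no additional computation.
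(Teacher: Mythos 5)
Your proposal is correct and matches the paper's approach: the corollary is stated there without separate proof precisely because it is the $r=1$ specialization of the odeco proposition, whose argument reduces to the observation (already recorded in the construction of $V_i$) that $V_1^T V_1 = I_m$, so $\varsigma_n(V_1)=1$ and \refthm{thm_condition_number} gives $\kappa(\tuple{p})=1$.
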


\section{Numerical experiments}\label{sec_numerical_experiments}
We illustrate the theoretical results in the setting of CPDs. We performed several experiments in Matlab R2016b, employing some features of Tensorlab v3.0 \cite{Tensorlab}.
The code implementing the computation of the condition number presented in \refsec{sec_tensor_rank_decomposition} is included in the ancillary files of the arXiv version of this paper.

In this section, low-rank tensors are specified by \emph{factor matrices} $A_k = [\sten{a}{i}{k}]_{i=1}^r$ with $\sten{a}{i}{k} \in \R^{m_k}$; the associated tensor is $\tensor{A} := \ldbracket A_1,\ldots,A_d \rdbracket := \sum_{i=1}^r \sten{a}{i}{1} \otimes \cdots \otimes \sten{a}{i}{d}$. By $X \sim N(0,1)$ we mean that all elements of $X \in \R^{m \times n}$ are standard normally distributed with zero mean and unit standard deviation.

\subsection{Estimating typical forward errors}\label{sec_experiment2}
While the bound in \refeqn{eqn_rule_of_thumb}, namely $\| \tuple{p} - \Phi_{\tuple{p}}^{-1}(\vect{w})\| \lesssim \kappa(\tuple{p}) \cdot \| \Phi(\tuple{p}) - \vect{w}\|$,  is asymptotically sharp, it may be questioned insofar this is a \emph{typical} result. Perhaps the estimate obtained from the condition number is overly pessimistic compared to the errors that one encounters in practice?

For investigating the above question, we conducted the following experiment.
A model for generating potentially ill-conditioned CPD problems is the tensor in $\R^{m_1 \times \cdots \times m_d}$ whose $k$th factor matrix is
\begin{align}\label{eq_basic_model}
 A_k(s) =  C_k (2^{-as} \cdot I_r + X_k Y_k^T) \quad\text{with } C_k, X_k, Y_k \sim N(0,1),
\end{align}
and where $a > 0$, $C_k \in \R^{m_k \times r}$ and $X_k, Y_k \in \R^{r \times r_k}$ with $r_k < m_k$. These tensors are of rank $r$ and maximal multilinear rank with probability $1$, and as $s \to \infty$ the factor matrix $A_k(s)$ is tending to the rank-$r_k$ matrix $C_k X_k Y_k^T$. The tensor $\tensor{A}(s) = \ldbracket A_1(s),\ldots,A_d(s) \rdbracket$ will tend to a tensor whose multilinear rank is at most $(r_1,\ldots,r_d)$ as $s \to \infty$. Recall that a tensor of multilinear rank $(r_1,\ldots,r_d)$ in $\R^{m_1 \times \cdots \times m_d}$ is $r$-identifiable if and only if the $r_1 \times \cdots \times r_d$ core array in its higher-order singular value decomposition (HOSVD) \cite{Lathauwer2008} is $r$-identifiable \cite[Theorem 4.1]{COV2014}. In particular if $r$ is strictly larger than the generic complex rank of $\R^{r_1 \times \cdots \times r_d}$ \cite{BT2015}, then $\tensor{A}(s)$ tends to a tensor with infinitely many decompositions with probability $1$. {We expect that the condition number of $\tensor{A}(s)$ increases with $s$ because of \refprop{prop_infdecomp_infcond}.}

We take $(m_1,m_2,m_3,m_4) = (6,5,4,4)$, $(r_1,r_2,r_3,r_4) = (1,2,3,4)$, $r=6$, $a = \frac{1}{5}$, and $s = 1,2,\ldots,50$ as an instance of the above model. Then, the tensor
\[
\tensor{A}(s) = \ldbracket \|A_1(s)\|_F^{-1} A_1(s), \ldots, \|A_d(s)\|_F^{-1} A_d(s) \rdbracket = \ldbracket B_1(s), \ldots, B_d(s) \rdbracket
\]
was computed. For every $s$, we tried to recover the CPD of $\tensor{A}(s)$ using the \texttt{cpd\_nls} optimization method from Tensorlab. It was halted either if the value of the objective function $\frac{1}{2}\| \Phi(\tuple{p}(s)) - \tensor{A}(s)\|^2 \le 10^{-14}$ or after $1000$ iterations. As initial factor matrices from whence \texttt{cpd\_nls} started we chose
\(
 \left\{ B_1(s) + \tau R_1, \ldots, B_d(s) + \tau R_d \right\},
\)
where $R_k \sim N(0,1)$ and $\tau = 5 \cdot 10^{-4}$. Let $\widetilde{\tensor{A}}(s) = \ldbracket \widetilde{B}_1(s), \ldots, \widetilde{B}_d(s) \rdbracket$ denote the CPD computed by \texttt{cpd\_nls}. Then, the following data was recorded: the backward error $\| \tensor{A}(s) - \widetilde{\tensor{A}}(s) \|_F$, the forward error $\| B_1(s) \odot \cdots \odot B_d(s) - \widetilde{B}_1(s) \odot \cdots \odot \widetilde{B}_d(s) \|_F$, and the condition number $\kappa(s)$ of the problem at the computed CPD $\widetilde{B}(s)$. For every $s=1,2,\ldots,50$, we took $250$ independent samples from model \refeqn{eq_basic_model} with the aforementioned choice of parameters. Out of the $12500$ experiments, $16$ were discarded because the \texttt{cpd\_nls} method did not converge within $1000$ iterations.

\begin{figure}[tb]
 \caption{Distribution of the scaling factor $\frac{\| B_1(s) \odot \cdots \odot B_d(s) - \widetilde{B}_1(s) \odot \cdots \odot \widetilde{B}_d(s) \|_F}{\kappa(s) \cdot \| \tensor{A}(s) - \widetilde{\tensor{A}}(s) \|_F} \lesssim 1$ for $250$ random samples of model \refeqn{eq_basic_model}.}
 \label{fig_deciles}
 \begin{center}
\begingroup
  \makeatletter
  \providecommand\color[2][]{%
    \GenericError{(gnuplot) \space\space\space\@spaces}{%
      Package color not loaded in conjunction with
      terminal option `colourtext'%
    }{See the gnuplot documentation for explanation.%
    }{Either use 'blacktext' in gnuplot or load the package
      color.sty in LaTeX.}%
    \renewcommand\color[2][]{}%
  }%
  \providecommand\includegraphics[2][]{%
    \GenericError{(gnuplot) \space\space\space\@spaces}{%
      Package graphicx or graphics not loaded%
    }{See the gnuplot documentation for explanation.%
    }{The gnuplot epslatex terminal needs graphicx.sty or graphics.sty.}%
    \renewcommand\includegraphics[2][]{}%
  }%
  \providecommand\rotatebox[2]{#2}%
  \@ifundefined{ifGPcolor}{%
    \newif\ifGPcolor
    \GPcolortrue
  }{}%
  \@ifundefined{ifGPblacktext}{%
    \newif\ifGPblacktext
    \GPblacktexttrue
  }{}%
  \let\gplgaddtomacro\g@addto@macro
  \gdef\gplbacktext{}%
  \gdef\gplfronttext{}%
  \makeatother
  \ifGPblacktext
    \def\colorrgb#1{}%
    \def\colorgray#1{}%
  \else
    \ifGPcolor
      \def\colorrgb#1{\color[rgb]{#1}}%
      \def\colorgray#1{\color[gray]{#1}}%
      \expandafter\def\csname LTw\endcsname{\color{white}}%
      \expandafter\def\csname LTb\endcsname{\color{black}}%
      \expandafter\def\csname LTa\endcsname{\color{black}}%
      \expandafter\def\csname LT0\endcsname{\color[rgb]{1,0,0}}%
      \expandafter\def\csname LT1\endcsname{\color[rgb]{0,1,0}}%
      \expandafter\def\csname LT2\endcsname{\color[rgb]{0,0,1}}%
      \expandafter\def\csname LT3\endcsname{\color[rgb]{1,0,1}}%
      \expandafter\def\csname LT4\endcsname{\color[rgb]{0,1,1}}%
      \expandafter\def\csname LT5\endcsname{\color[rgb]{1,1,0}}%
      \expandafter\def\csname LT6\endcsname{\color[rgb]{0,0,0}}%
      \expandafter\def\csname LT7\endcsname{\color[rgb]{1,0.3,0}}%
      \expandafter\def\csname LT8\endcsname{\color[rgb]{0.5,0.5,0.5}}%
    \else
      \def\colorrgb#1{\color{black}}%
      \def\colorgray#1{\color[gray]{#1}}%
      \expandafter\def\csname LTw\endcsname{\color{white}}%
      \expandafter\def\csname LTb\endcsname{\color{black}}%
      \expandafter\def\csname LTa\endcsname{\color{black}}%
      \expandafter\def\csname LT0\endcsname{\color{black}}%
      \expandafter\def\csname LT1\endcsname{\color{black}}%
      \expandafter\def\csname LT2\endcsname{\color{black}}%
      \expandafter\def\csname LT3\endcsname{\color{black}}%
      \expandafter\def\csname LT4\endcsname{\color{black}}%
      \expandafter\def\csname LT5\endcsname{\color{black}}%
      \expandafter\def\csname LT6\endcsname{\color{black}}%
      \expandafter\def\csname LT7\endcsname{\color{black}}%
      \expandafter\def\csname LT8\endcsname{\color{black}}%
    \fi
  \fi
  \setlength{\unitlength}{0.0500bp}%
\scalebox{.82}{
  \begin{picture}(8640.00,4400.00)%
    \gplgaddtomacro\gplbacktext{%
      \csname LTb\endcsname%
      \put(990,704){\makebox(0,0)[r]{\strut{}$10^{-1}$}}%
      \csname LTb\endcsname%
      \put(990,1037){\makebox(0,0)[r]{\strut{}$10^{-0.9}$}}%
      \csname LTb\endcsname%
      \put(990,1369){\makebox(0,0)[r]{\strut{}$10^{-0.8}$}}%
      \csname LTb\endcsname%
      \put(990,1702){\makebox(0,0)[r]{\strut{}$10^{-0.7}$}}%
      \csname LTb\endcsname%
      \put(990,2035){\makebox(0,0)[r]{\strut{}$10^{-0.6}$}}%
      \csname LTb\endcsname%
      \put(990,2368){\makebox(0,0)[r]{\strut{}$10^{-0.5}$}}%
      \csname LTb\endcsname%
      \put(990,2700){\makebox(0,0)[r]{\strut{}$10^{-0.4}$}}%
      \csname LTb\endcsname%
      \put(990,3033){\makebox(0,0)[r]{\strut{}$10^{-0.3}$}}%
      \csname LTb\endcsname%
      \put(990,3366){\makebox(0,0)[r]{\strut{}$10^{-0.2}$}}%
      \csname LTb\endcsname%
      \put(990,3698){\makebox(0,0)[r]{\strut{}$10^{-0.1}$}}%
      \csname LTb\endcsname%
      \put(990,4031){\makebox(0,0)[r]{\strut{}$10^{0}$}}%
      \csname LTb\endcsname%
      \put(1122,484){\makebox(0,0){\strut{} 0}}%
      \csname LTb\endcsname%
      \put(1617,484){\makebox(0,0){\strut{} 5}}%
      \csname LTb\endcsname%
      \put(2111,484){\makebox(0,0){\strut{} 10}}%
      \csname LTb\endcsname%
      \put(2606,484){\makebox(0,0){\strut{} 15}}%
      \csname LTb\endcsname%
      \put(3100,484){\makebox(0,0){\strut{} 20}}%
      \csname LTb\endcsname%
      \put(3595,484){\makebox(0,0){\strut{} 25}}%
      \csname LTb\endcsname%
      \put(4089,484){\makebox(0,0){\strut{} 30}}%
      \csname LTb\endcsname%
      \put(4584,484){\makebox(0,0){\strut{} 35}}%
      \csname LTb\endcsname%
      \put(5078,484){\makebox(0,0){\strut{} 40}}%
      \csname LTb\endcsname%
      \put(5573,484){\makebox(0,0){\strut{} 45}}%
      \csname LTb\endcsname%
      \put(6067,484){\makebox(0,0){\strut{} 50}}%
      \put(3594,154){\makebox(0,0){\strut{}$s$}}%
    }%
    \gplgaddtomacro\gplfronttext{%
      \csname LTb\endcsname%
      \put(7651,3921){\makebox(0,0)[r]{\strut{}1st Decile}}%
      \csname LTb\endcsname%
      \put(7651,3701){\makebox(0,0)[r]{\strut{}2nd Decile}}%
      \csname LTb\endcsname%
      \put(7651,3481){\makebox(0,0)[r]{\strut{}3rd Decile}}%
      \csname LTb\endcsname%
      \put(7651,3261){\makebox(0,0)[r]{\strut{}4th Decile}}%
      \csname LTb\endcsname%
      \put(7651,3041){\makebox(0,0)[r]{\strut{}5th Decile}}%
      \csname LTb\endcsname%
      \put(7651,2821){\makebox(0,0)[r]{\strut{}6th Decile}}%
      \csname LTb\endcsname%
      \put(7651,2601){\makebox(0,0)[r]{\strut{}7th Decile}}%
      \csname LTb\endcsname%
      \put(7651,2381){\makebox(0,0)[r]{\strut{}8th Decile}}%
      \csname LTb\endcsname%
      \put(7651,2161){\makebox(0,0)[r]{\strut{}9th Decile}}%
    }%
    \gplbacktext
    \put(0,0){\includegraphics{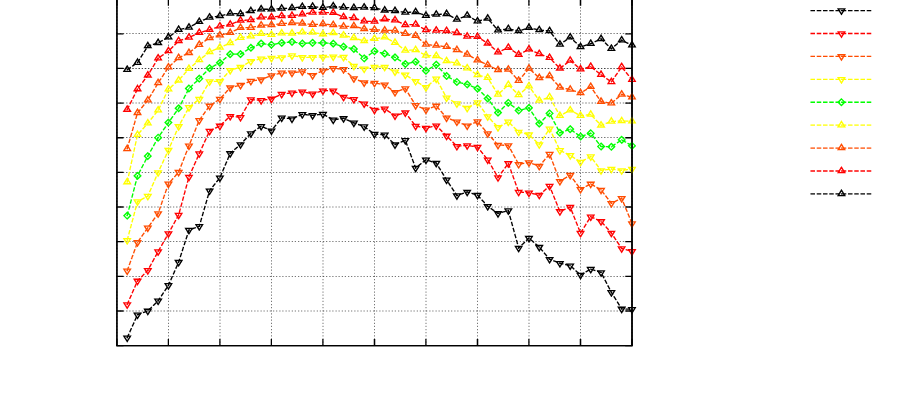}}%
    \gplfronttext
  \end{picture}%
}
\endgroup
 \end{center}
\end{figure}

\begin{table}[tb]
\caption{The quartiles of the observed condition numbers in the experiment described in \refsec{sec_experiment2}.}
\label{tab_cns}
\[\footnotesize
\begin{array}{lcccccc}
\toprule
s & 1 & 10 & 20 & 30 & 40 & 50 \\
\midrule
\text{1st quartile} 	&  1.27 \cdot10^{1} & 5.40 \cdot10^{1} & 2.60 \cdot10^{2} & 1.22 \cdot10^{3} & 5.03 \cdot10^{3} & 2.00 \cdot10^{4} \\
\text{median} 		&  2.04 \cdot10^{1} & 1.02 \cdot10^{2} & 4.88 \cdot10^{2} & 2.09 \cdot10^{3} & 8.55 \cdot10^{3} & 3.44 \cdot10^{4} \\
\text{3rd quartile} 	&  3.27 \cdot10^{1} & 1.77 \cdot10^{2} & 8.68 \cdot10^{2} & 3.65 \cdot10^{3} & 1.50 \cdot10^{4} & 6.03 \cdot10^{4} \\
\bottomrule
\end{array}
\]
\end{table}

The obtained results are summarized in \reffig{fig_deciles} and \reftab{tab_cns}. The quartiles of the observed condition numbers are shown in \reftab{tab_cns}; note that they indeed increase with $s$. Figure \ref{fig_deciles} plots the deciles of the forward error divided by the asymptotic upper bound on the forward error, i.e., the backward error multiplied with the condition number. The figure thus shows for every $s$ that in at least $90\%$ of the cases the asymptotic upper bound on the forward error was at most $10$ times larger than the actual forward error. The curve of the $9$th decile even shows that in $10\%$ of the cases for every $s$ the observed forward error was at least $10^{-0.2} \approx 0.6$ times the asymptotic upper bound on the forward error. This provides solid evidence that the condition number multiplied with the backward error \emph{is not an overly pessimistic estimate of the forward error}. In fact, in this particular model \emph{the forward error of the computed CPD is only fractionally less than the asymptotic upper bound.}
It follows from \reftab{tab_cns} that the backward error, which was approximately $\sqrt{2} \cdot 10^{-7}$ in every experiment, multiplied with the condition number more accurately estimates the forward error than the backward error---the latter is still the dominant criterion for evaluating the quality of CPDs in absence of the forward error.

\subsection{Open boundary points}\label{sec_obp_experiments}
In the next experiments the condition number is computed for a sequence of rank-$r$ tensors that converges to an open boundary point, i.e., a tensor of rank at least $r+1$. According to \refthm{thm_boundary_points_cn}, the condition number should increase without bound along this sequence.

\begin{figure}[bt]
\caption{The condition number for the sequences \refeqn{eqn_sequence1} and \refeqn{eqn_sequence2}.}
\label{fig_convergence_obp}
\begin{center}
\begingroup
  \makeatletter
  \providecommand\color[2][]{%
    \GenericError{(gnuplot) \space\space\space\@spaces}{%
      Package color not loaded in conjunction with
      terminal option `colourtext'%
    }{See the gnuplot documentation for explanation.%
    }{Either use 'blacktext' in gnuplot or load the package
      color.sty in LaTeX.}%
    \renewcommand\color[2][]{}%
  }%
  \providecommand\includegraphics[2][]{%
    \GenericError{(gnuplot) \space\space\space\@spaces}{%
      Package graphicx or graphics not loaded%
    }{See the gnuplot documentation for explanation.%
    }{The gnuplot epslatex terminal needs graphicx.sty or graphics.sty.}%
    \renewcommand\includegraphics[2][]{}%
  }%
  \providecommand\rotatebox[2]{#2}%
  \@ifundefined{ifGPcolor}{%
    \newif\ifGPcolor
    \GPcolortrue
  }{}%
  \@ifundefined{ifGPblacktext}{%
    \newif\ifGPblacktext
    \GPblacktexttrue
  }{}%
  \let\gplgaddtomacro\g@addto@macro
  \gdef\gplbacktext{}%
  \gdef\gplfronttext{}%
  \makeatother
  \ifGPblacktext
    \def\colorrgb#1{}%
    \def\colorgray#1{}%
  \else
    \ifGPcolor
      \def\colorrgb#1{\color[rgb]{#1}}%
      \def\colorgray#1{\color[gray]{#1}}%
      \expandafter\def\csname LTw\endcsname{\color{white}}%
      \expandafter\def\csname LTb\endcsname{\color{black}}%
      \expandafter\def\csname LTa\endcsname{\color{black}}%
      \expandafter\def\csname LT0\endcsname{\color[rgb]{1,0,0}}%
      \expandafter\def\csname LT1\endcsname{\color[rgb]{0,1,0}}%
      \expandafter\def\csname LT2\endcsname{\color[rgb]{0,0,1}}%
      \expandafter\def\csname LT3\endcsname{\color[rgb]{1,0,1}}%
      \expandafter\def\csname LT4\endcsname{\color[rgb]{0,1,1}}%
      \expandafter\def\csname LT5\endcsname{\color[rgb]{1,1,0}}%
      \expandafter\def\csname LT6\endcsname{\color[rgb]{0,0,0}}%
      \expandafter\def\csname LT7\endcsname{\color[rgb]{1,0.3,0}}%
      \expandafter\def\csname LT8\endcsname{\color[rgb]{0.5,0.5,0.5}}%
    \else
      \def\colorrgb#1{\color{black}}%
      \def\colorgray#1{\color[gray]{#1}}%
      \expandafter\def\csname LTw\endcsname{\color{white}}%
      \expandafter\def\csname LTb\endcsname{\color{black}}%
      \expandafter\def\csname LTa\endcsname{\color{black}}%
      \expandafter\def\csname LT0\endcsname{\color{black}}%
      \expandafter\def\csname LT1\endcsname{\color{black}}%
      \expandafter\def\csname LT2\endcsname{\color{black}}%
      \expandafter\def\csname LT3\endcsname{\color{black}}%
      \expandafter\def\csname LT4\endcsname{\color{black}}%
      \expandafter\def\csname LT5\endcsname{\color{black}}%
      \expandafter\def\csname LT6\endcsname{\color{black}}%
      \expandafter\def\csname LT7\endcsname{\color{black}}%
      \expandafter\def\csname LT8\endcsname{\color{black}}%
    \fi
  \fi
  \setlength{\unitlength}{0.0500bp}%
  \scalebox{.8}{
  \begin{picture}(8640.00,3528.00)%
    \gplgaddtomacro\gplbacktext{%
      \csname LTb\endcsname%
      \put(528,846){\makebox(0,0)[r]{\strut{}$10^{2}$}}%
      \csname LTb\endcsname%
      \put(528,1223){\makebox(0,0)[r]{\strut{}$10^{4}$}}%
      \csname LTb\endcsname%
      \put(528,1599){\makebox(0,0)[r]{\strut{}$10^{6}$}}%
      \csname LTb\endcsname%
      \put(528,1975){\makebox(0,0)[r]{\strut{}$10^{8}$}}%
      \csname LTb\endcsname%
      \put(528,2352){\makebox(0,0)[r]{\strut{}$10^{10}$}}%
      \csname LTb\endcsname%
      \put(528,2728){\makebox(0,0)[r]{\strut{}$10^{12}$}}%
      \csname LTb\endcsname%
      \put(528,3104){\makebox(0,0)[r]{\strut{}$10^{14}$}}%
      \csname LTb\endcsname%
      \put(528,3481){\makebox(0,0)[r]{\strut{}$10^{16}$}}%
      \csname LTb\endcsname%
      \put(1447,484){\makebox(0,0){\strut{} 10}}%
      \csname LTb\endcsname%
      \put(2321,484){\makebox(0,0){\strut{} 20}}%
      \csname LTb\endcsname%
      \put(3195,484){\makebox(0,0){\strut{} 30}}%
      \csname LTb\endcsname%
      \put(4069,484){\makebox(0,0){\strut{} 40}}%
      \csname LTb\endcsname%
      \put(4943,484){\makebox(0,0){\strut{} 50}}%
      \csname LTb\endcsname%
      \put(5818,484){\makebox(0,0){\strut{} 60}}%
      \csname LTb\endcsname%
      \put(6692,484){\makebox(0,0){\strut{} 70}}%
      \csname LTb\endcsname%
      \put(7566,484){\makebox(0,0){\strut{} 80}}%
      \csname LTb\endcsname%
      \put(8440,484){\makebox(0,0){\strut{} 90}}%
      \put(4550,154){\makebox(0,0){\strut{}$s$}}%
      \put(4550,3395){\makebox(0,0){\strut{}}}%
    }%
    \gplgaddtomacro\gplfronttext{%
      \csname LTb\endcsname%
      \put(3168,3332){\makebox(0,0)[r]{\strut{}Sequence \cref{eqn_sequence1}}}%
      \csname LTb\endcsname%
      \put(3168,3112){\makebox(0,0)[r]{\strut{}Sequence \cref{eqn_sequence2}}}%
    }%
    \gplbacktext
    \put(0,0){\includegraphics{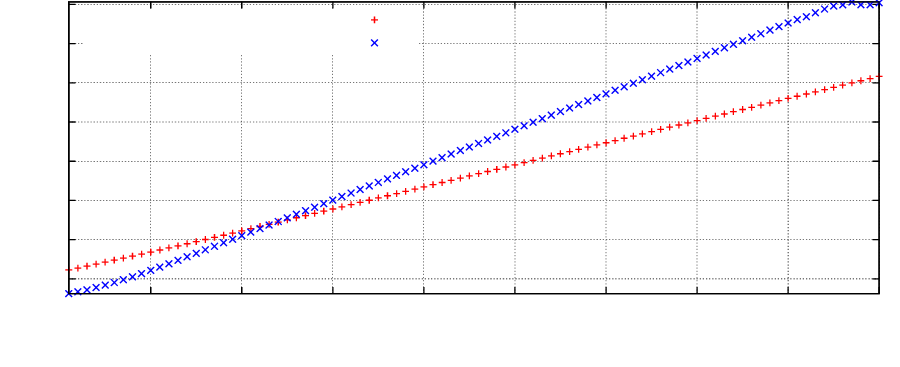}}%
    \gplfronttext
  \end{picture}}
\endgroup
\end{center}
\end{figure}

For brevity we consider the examples of Paatero \cite[Section 7]{Paatero2000}, and de Silva and Lim \cite[Theorem 1.1]{dSL2008}. The first sequence consists of the rank-$3$ tensors
\begin{align}
\nonumber\tensor{A}(s) =
 &-2^{\frac{3s}{16}} (\sten{a}{1}{1} + \sten{a}{2}{1}) \otimes \sten{a}{1}{2} \otimes \sten{a}{1}{3}  +2^{\frac{3s}{16}} \sten{a}{2}{1} \otimes (\sten{a}{1}{2} + 2^{-\frac{3s}{16}-1} \sten{a}{3}{2}) \otimes (\sten{a}{1}{3} + 2^{-\frac{3s}{16}-1} \sten{a}{3}{3})
  \\
 &+ 2^{\frac{3s}{16}} (\sten{a}{1}{1} + 2^{-\frac{3s}{16}-1} \sten{a}{3}{1}) \otimes (\sten{a}{1}{2} + 2^{-\frac{3s}{16}-1} \sten{a}{2}{2}) \otimes (\sten{a}{1}{3} + 2^{-\frac{3s}{16}-1} \sten{a}{2}{3}),\label{eqn_sequence1}
\end{align}
where $A_k = [\sten{a}{i}{k}]_{i=1}^3 \in \R^{m_k \times 3}$ and $A_k \sim N(0,1)$. We took $(m_1,m_2,m_3) = (5,4,3)$ so that $\tensor{A}(s)$ has a unique rank-$3$ decomposition for all $s$ with probability $1$. As $s \to \infty$, $\tensor{A}(s)$ tends to a rank-$5$ tensor, which even has a positive-dimensional family of rank-$5$ CPDs \cite[Section 7]{Paatero2000}.
The sequence of rank-$2$ tensors from \cite{dSL2008} is
\begin{align}\label{eqn_sequence2}
 \tensor{B}(s) = 2^{\frac{s}{5}} (\sten{b}{1}{1} + 2^{-\frac{s}{5}} \sten{b}{2}{1}) \otimes (\sten{b}{1}{2} + 2^{-\frac{s}{5}} \sten{b}{2}{2}) \otimes (\sten{b}{1}{3} + 2^{-\frac{s}{5}} \sten{b}{2}{3}) - 2^{\frac{s}{5}} \sten{b}{1}{1}\otimes\sten{b}{1}{2}\otimes\sten{b}{1}{3},
\end{align}
where $B_k = [\sten{b}{i}{k}]_{i=1}^2 \in \R^{m_k \times 2}$ and $B_k \sim N(0,1)$. We took $(m_1,m_2,m_3) = (5,3,2)$ so that $\tensor{B}(s)$ has a unique rank-$2$ decomposition for all $s$ with probability $1$. The limit for $s\to\infty$ of this sequence is a rank-$3$ tensor. In both cases, we randomly sampled the factor matrices $A_k$ and $B_k$ and then varied $s$. For every $s=1,2,\ldots,90$, the condition number was computed.

The results are summarized in \reffig{fig_convergence_obp}. Both graphs show greatly increasing condition numbers as $s$ increases, consistent with \refthm{thm_boundary_points_cn}. Comparing \reffig{fig_convergence_obp} with Figure 2 from \cite{V2017} reveals qualitatively similar behavior of the condition numbers in \cite[Theorem 1]{V2017} and \refthm{thm_condition_number}.

From plotting the condition numbers of the sequence \cref{eqn_sequence1} and \cref{eqn_sequence2} we are convinced that for increasing $s$ the respective decompositions become ill-posed. But \emph{how} they become ill-posed is not so evident. To get a clearer picture we consider another experiment. In \cite[Equation (21)]{Paatero2000} Paatero gives the following parametrized family of $2\times 2\times 2$ tensors
\begin{equation}\label{eqn_sequence3}
\tensor{C}(s,t) = (\sten{e}{1}{}\otimes \sten{e}{2}{}+\sten{e}{2}{}\otimes \sten{e}{1}{}+ s\,\sten{e}{2}{}\otimes \sten{e}{2}{})\otimes\sten{e}{1}{} + (30\,\sten{e}{1}{}\otimes \sten{e}{1}{}-t\,\sten{e}{2}{}\otimes \sten{e}{2}{})\otimes\sten{e}{2}{},
\end{equation}
where $\sten{e}{1}{},\sten{e}{2}{},\sten{e}{3}{}$ are the standard basis vectors in $\R^2$.
Paatero shows that for $\tfrac{15}{2} s^2 < t$ the tensor $\tensor{C}(s,t)$ has rank $3$, whereas for $\tfrac{15}{2} s^2 >t$ the tensor $\tensor{C}(s,t)$ has rank $2$. Note that $\dim \R^{2\times 2\times 2} = 8$ and $8= 2\dim \Var S$. The generic fibers of $\Phi:\Var S^{\times 3} \to \R^{2\times 2\times 2}$ are positive-dimensional and, consequently, the condition number of a decomposition of a generic rank-3 tensor is ill-posed. The parabola $\tfrac{15}{2} s^2=t$ consists of open boundary points. Hence, the condition number must increase when heading towards this parabola from the side $\tfrac{15}{2} s^2 >t$.

\begin{figure}[bt]
\caption{The base-10 logarithm $\log_{10}$ of the condition number for Paatero's model \cref{eqn_sequence3}. The grid consists of $100\times 100$ points equally spaced in $[-0.4,0.4]\times [-0.4,0.4]$.}
\label{fig_paatero}
\begin{center}
\includegraphics[width=0.9\textwidth]{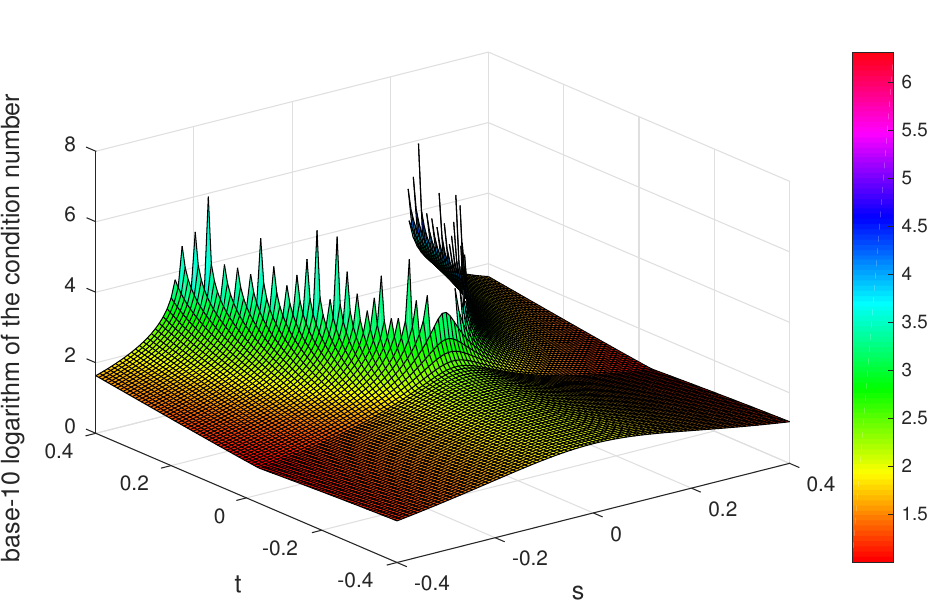}
\end{center}
\end{figure}

The experiment we performed is as follows. On a $100 \times 100$ uniformly spaced grid in $[-0.4,0.4]\times [-0.4,0.4]$, we computed CPDs for $\tensor{C}(s,t)$ using the direct decomposition algorithm \texttt{cpd\_gevd} from Tensorlab v3.0 \cite{Tensorlab}. Then we computed the condition number of the decompositions. The outcome of this experiment can be seen in \cref{fig_paatero}. Note in particular spikes close to the parabola $\tfrac{15}{2} s^2 =t$.

\section{Conclusions}\label{sec_conclusions}
A local condition number for the join decomposition problem on $\Var{J} = \JOIN(\Var{M}_1,\ldots,\Var{M}_r)$ was presented in this paper. We gave both an easily computable spectral characterization as well as a characterization as an inverse projection distance to a locus of ill-posed problems in an auxiliary product Grassmannian. We believe that one main application of the condition number lies in the infamous rule of thumb of numerical analysis in \refeqn{eqn_rule_of_thumb}.

An immediate consequence of \refthm{thm_condition_number} is that a join decomposition $\tuple{p}$ has $\kappa(\tuple{p}) = \infty$ when $\Phi(\tuple{p})$ locally has a smooth positive-dimensional family of alternative decompositions $\Var{E}$ so that $\Phi(\tuple{p}) = \Phi(\Var{E})$; this was stated as \refprop{prop_infdecomp_infcond}.

The JDP is ill-posed at open boundary points. Provided that the join set satisfies some technical conditions, \refthm{thm_boundary_points_cn} proved that open boundary points are completely surrounded by ill-conditioned JDPs: on a sequence of join decompositions tending to an open boundary point the condition number tends to infinity.

Two examples of join decompositions in the context of tensors were investigated more closely: the CPD and the Waring decomposition. For these examples, a closed expression of $U$ in \refthm{thm_condition_number} was given, which is easily amenable to a computer implementation. We additionally presented closed expressions of the condition number for certain subclasses of tensors.

We hope that the proposed condition number will find application in the analysis of join decompositions and their JDPs. In fact, we have recently shown in \cite{BV2017_2} that the condition number naturally appears in the convergence analysis of certain Riemannian optimization methods for solving \refeqn{eqn_prototype_optim_problem}. Moreover, in \cite{BV2018} we demonstrated that great computational savings can be realized for solving the canonical tensor rank approximation problem compared to state-of-the-art algorithms in Tensorlab v3.0 by exploiting information about the condition number of the CPD in Riemannian optimization algorithms.

\subsection*{Acknowledgements}
We thank Peter B\"urgisser for all his helpful comments and advice. In particular, he pointed out to us some references used in \refsec{sec_def_condition_number}.
We want to thank the organizers of the \emph{Workshop on Tensor Decompositions and Applications 2016} in Leuven, Belgium, where the authors first met. Finally, we thank the editor D. Kressner and the anonymous referees for their remarks which improved this paper.


\bibliographystyle{siamplain}
\bibliography{BV}
\end{document}